\begin{document}
\newtheorem{The}{Theorem}[section]

\newcommand{\K}{\mathbb{Q}(\sqrt{d},i)}
\newcommand{\q}{\mathbb{Q}(\sqrt{d})/\mathbb{Q}}
\newcommand{\qq}{\mathbb{Q}(\sqrt{d})}
\newcommand{\pro}{\prod_{p/\Delta_K}e(p)}
\newtheorem{lem}[The]{Lemma}
\newtheorem{propo}[The]{Proposition}
\newtheorem{coro}[The]{Corollary}
\newtheorem{proprs}[The]{properties}
\newtheorem*{mainthem}{Theorem}
%==================================================
\theoremstyle{remark}
\newtheorem{rema}[The]{\bf Remark}
\newtheorem{remas}[The]{\bf Remarks}
\newtheorem{exam}[The]{\textbf{Numerical Example}}
\newtheorem{exams}[The]{\textbf{Numerical Examples}}
\newtheorem{df}[The]{definition}
\newtheorem{dfs}[The]{definitions}
% MATH ---------------------------------------------------
\def\NN{\mathds{N}}
\def\RR{\mathbb{R}}
\def\HH{I\!\! H}
\def\QQ{\mathbb{Q}}
\def\CC{\mathbb{C}}
\def\ZZ{\mathbb{Z}}
\def\OO{\mathcal{O}}
\def\kk{\mathds{k}}
\def\KK{\mathbb{K}}
\def\FF{\mathbb{F}}
\def\ho{\mathcal{H}_0^{\frac{h(d)}{2}}}
\def\LL{\mathbb{L}}
\def\o{\mathcal{H}_0}
\def\h{\mathcal{H}_1}
\def\hh{\mathcal{H}_2}
\def\hhh{\mathcal{H}_3}
\def\hhhh{\mathcal{H}_4}
\def\k{\mathds{k}^{(*)}}
\def\G{\mathds{k}^{(*)}}
\def\l{\mathds{L}}
\def\L{\kk_2^{(2)}}
\def\kkk{k^{(*)}}
\def\Q{\mathcal{Q}}
\def\D{\kk_2^{(1)}}
\title[Capitulation in the absolutely abelian  extensions...]{Capitulation in the absolutely abelian  extensions of some fields $\QQ(\sqrt{p_1p_2q}, \sqrt{-1})$}
%premier auteur
\author[A. Azizi]{Abdelmalek Azizi}
\address{Abdelmalek Azizi: Mohammed First University, Mathematics Department, Sciences Faculty, Oujda, Morocco }
\email{abdelmalekazizi@yahoo.fr}
% deuxième auteur
\author[A. Zekhnini]{Abdelkader Zekhnini}
\address{Abdelkader Zekhnini: Mohammed First University, Mathematics Department, Pluridisciplinary faculty, Nador, Morocco}
\email{zekha1@yahoo.fr}
% troisième auteur
\author[M. Taous]{Mohammed Taous}
\address{Mohammed Taous: Moulay Ismail University, Mathematics Department, Sciences and Techniques Faculty, Errachidia, Morocco.}
\email{taousm@hotmail.com}

\subjclass[2010]{11R11, 11R16, 11R20, 11R27, 11R29, 11R37}
\keywords{absolute and relative genus fields, fundamental systems of units, 2-class group, capitulation, quadratic fields, biquadratic fields,  multiquadratic CM-fields}
\maketitle
\selectlanguage{english}
\begin{abstract}
We study the capitulation of $2$-ideal classes of an infinite
family of imaginary bicyclic biquadratic number fields consisting of
 fields $\kk =\QQ(\sqrt{p_1p_2q}, i)$, where $i=\sqrt{-1}$ and $p_1\equiv p_2\equiv-q\equiv1 \pmod 4$
are different primes. For each of the three quadratic extensions $\KK/\kk$
inside the absolute genus field $\kk^{(*)}$ of $\kk$, we compute the capitulation
kernel of $\KK/\kk$. Then we deduce that each strongly ambiguous class of $\kk/\QQ(i)$
capitulates already in $\kk^{(*)}$, which is  smaller than the relative
genus field $(\kk/\QQ(i))^*$.
\end{abstract}
\section{Introduction and Notations}\label{sec1}
Let $k$ be an algebraic number field and let $\mathbf{C}l_2(k)$ denote its 2-class group, that is the 2-Sylow
subgroup of the ideal class group, $\mathbf{C}l(k)$,  of $k$.
We denote by  $k^{(*)}$  the absolute genus field of $k$.
Suppose $F$ is a finite extension of $k$,
then we say that an ideal class of $k$ capitulates in $F$ if it is in the kernel of the
homomorphism
$$J_F: \mathbf{C}l(k) \longrightarrow \mathbf{C}l(F)$$ induced by extension of ideals from $k$
to $F$. An
important problem in Number Theory is to determine explicitly the kernel of  $J_F$, which is
usually called the capitulation kernel. If $F$ is
the  relative genus  field of a cyclic extension $K/k$, which we denote  by $(K/k)^*$ and that is  the maximal unramified extension  of  $K$ which is obtained by composing  $K$ and an abelian extension over $k$, F. Terada  states in  \cite{FT-71} that all the ambiguous ideal classes of $K/k$, which are classes of $K$ fixed under any element of $\mathrm{G}al(K/k)$,  capitulate in
 $(K/k)^*$. If $F$ is the  absolute genus  field of an abelian extension $K/\QQ$, then H. Furuya confirms in \cite{Fu-77} that every   strongly ambiguous  class of $K/\QQ$, that is an ambiguous ideal class containing at least one ideal invariant under any element of $\mathrm{G}al(K/\QQ)$, capitulates in  $F$. In this paper, we construct a family of number fields $k$ for which  all the strongly ambiguous  classes of $k/\QQ(i)$ capitulate in  $k^{(*)}\varsubsetneq (k/\QQ(i))^*$.\par
 Let $\kk=\QQ(\sqrt{d}, i)$ and $\mathds{K}$ be an unramified quadratic extension of $\kk$ that is abelian  over $\QQ$. Denote by $\mathrm{A}m_s(\kk/\QQ(i))$
 the group of the strongly ambiguous classes of  $\kk/\QQ(i)$. In \cite{AZT14-1}, we studied the capitulation problem in the absolutely abelian extensions of $\kk$ for  $d=2pq$ and $p\equiv q\equiv1\pmod4$ are different primes, and in  \cite{AZT14-3}, we have dealt with  the same problem assuming $p\equiv -q\equiv1\pmod4$. In  \cite{AZT14-2, AZTM, AZT-15} and under the assumption $\mathbf{C}l_2(\kk)\simeq (2, 2, 2)$, we studied the capitulation problem of the $2$-ideal classes of $\kk$ in its fourteen unramified extensions, within the first Hilbert $2$-class field of $\kk$, and we gave the abelian type invariants of the $2$-class groups of these fourteen fields, additionally we determined  the structure of the metabelian Galois group $G=\mathrm{Gal}(\L/\kk)$ of the second Hilbert 2-class field $\L$ of $\kk$.   Let $p_1\equiv p_2\equiv-q\equiv1 \pmod 4$ be different primes and  $d=p_1p_2q$, it is the purpose of the present article
to pursue this research project. We will  compute  the capitulation
kernel of $\KK/\kk$ and  we will  deduce  that   $\mathrm{A}m_s(\kk/\QQ(i))\subseteq \ker J_{\k}$. As
an application we will determine these kernels when  $\mathbf{C}l_2(\mathds{k})$ is of type $(2, 2, 2)$.

Let  $k$ be a number field, during this paper, we adopt the following notations:
 \begin{itemize}
   \item   $p_1\equiv p_2\equiv-q\equiv1 \pmod 4$ are different primes.
   \item $\kk$: denotes the field $\QQ(\sqrt{p_1p_2q},\sqrt{-1})$.
   \item $\kappa_{K}$: the capitulation kernel of an unramified extension $K/\kk$.
   \item $\mathcal{O}_k$: the ring of integers of $k$.
   \item $E_k$: the unit group of $\mathcal{O}_k$.
   \item  $W_k$: the group of roots of unity contained in $k$.
   \item $\mathrm{F}.\mathrm{S}.\mathrm{U}$ : the fundamental system of units.
   \item $k^+$: the maximal real subfield of $k$, if it is a CM-field.
   \item $Q_k=[E_k:W_kE_{k^+}]$ is Hasse's unit index, if $k$ is a CM-field.
   \item $q(k/\QQ)=[E_k:\prod_{i=1}^{s} E_{k_i}]$ is the unit index of $k$, if $k$ is multiquadratic, where $k_1$, ..., $k_s$  are the  quadratic subfields of $k$.
  \item $k^{(*)}$: the absolute genus field of $k$.
  \item $\mathbf{C}l_2(k)$: the 2-class group of $k$.
  \item $i=\sqrt{-1}$.
  \item $\epsilon_m$: the fundamental unit of $\QQ(\sqrt m)$, if $m>1$ is a square-free integer.
  \item $N(a)$: denotes the absolute norm of a number $a$ i.e. $N_{k/\QQ}(a)$, where $k=\QQ(\sqrt a)$.
  \item $x\pm y$ means $x+y$ or $x-y$ for some numbers $x$ and $y$.
 \end{itemize}
  Our main theorem is.
 \begin{mainthem}
 Let $p_1\equiv p_2\equiv-q\equiv1 \pmod 4$ be different primes and put $\kk=\QQ(\sqrt{p_1p_2q}, \sqrt{-1})$.
Denote by $Am_s(\kk/\QQ(i))$
the group of the strongly ambiguous classes of  $\kk/\QQ(i)$.
If $\mathds{K}$ is an unramified quadratic extension of $\kk$ that is  abelian  over $\QQ$,
then
\begin{enumerate}[\upshape\indent1.]
\item $\left|\kappa_{\KK}\right|=2, 4\text{ or }8$.
  \item $\kappa_{\KK}\subset Am_s(\kk/\QQ(i))$.
  \item $Am_s(\kk/\QQ(i))\subseteq \kappa_{\k}$.
\end{enumerate}
\end{mainthem}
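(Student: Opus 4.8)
The plan is to treat the three unramified quadratic extensions $\KK_1,\KK_2,\KK_3$ of $\kk$ lying inside $\k$ one at a time for parts (1) and (2), and then to assemble the outcomes for (3). Write $\mathrm{Gal}(\KK/\kk)=\langle\sigma\rangle$. The backbone is the cohomological description of the capitulation kernel of an unramified cyclic extension. Since $\kk=\QQ(\sqrt{p_1p_2q},i)$ is totally imaginary and $\KK/\kk$ is unramified, every $\sigma$-fixed ideal of $\KK$ is extended from $\kk$; combining this with $H^1(\langle\sigma\rangle,\KK^{\times})=0$ (Hilbert 90) gives
$$\kappa_{\KK}\simeq \widehat{H}^{-1}(\langle\sigma\rangle,E_{\KK})=\frac{\{u\in E_{\KK}:N_{\KK/\kk}(u)=1\}}{\{\sigma(u)/u:\ u\in E_{\KK}\}},$$
so the whole problem is transferred to the arithmetic of units.

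For (1) the key is the Herbrand quotient $q(E_{\KK})$. The field $\kk$ has exactly two complex places, and each splits in the unramified extension $\KK$, so the four complex places of $\KK$ form two free $\langle\sigma\rangle$-orbits; feeding the logarithmic-embedding sequence $0\to E_{\KK}/W_{\KK}\to\ZZ[\text{places}]\to\ZZ\to0$ into the multiplicativity of $q$ yields $q(E_{\KK})=\frac{1}{2}$, hence $|\kappa_{\KK}|=2\,[E_{\kk}:N_{\KK/\kk}(E_{\KK})]$. Now $E_{\kk}=\langle i\rangle\times\langle\varepsilon\rangle$ with $\varepsilon$ a fundamental unit, and both $-1=N_{\KK/\kk}(i)$ and $\varepsilon^{2}=N_{\KK/\kk}(\varepsilon)$ are relative norms; thus $\langle-1,\varepsilon^{2}\rangle\subseteq N_{\KK/\kk}(E_{\KK})\subseteq E_{\kk}$ with $[E_{\kk}:\langle-1,\varepsilon^{2}\rangle]=4$, forcing the norm index into $\{1,2,4\}$ and $|\kappa_{\KK}|$ into $\{2,4,8\}$. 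Pinning down the exact value in each case requires a fundamental system of units of the octic CM-field $\KK$ and the decision whether $i$ or $\varepsilon$ is itself a relative norm.

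For (2) I would render $\kappa_{\KK}$ explicit. Let $\mathfrak P_1,\bar{\mathfrak P}_1,\mathfrak P_2,\bar{\mathfrak P}_2,\mathfrak Q$ be the primes of $\kk$ ramifying in $\kk/\QQ(i)$ (lying over $p_1$, $p_2$ and the inert prime $q$); their classes generate $\mathrm{Am}_s(\kk/\QQ(i))$, and each is fixed by $\mathrm{Gal}(\kk/\QQ(i))$. Exhibiting, in each extension, principal generators in $\KK$ shows that a generating set of $\kappa_{\KK}$ consists of classes of products of these ramified primes, whence $\kappa_{\KK}\subseteq\mathrm{Am}_s(\kk/\QQ(i))$. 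For (3), transitivity of capitulation along $\KK_j\subseteq\k$ gives $\kappa_{\KK_j}\subseteq\kappa_{\k}$, so $\langle\kappa_{\KK_1},\kappa_{\KK_2},\kappa_{\KK_3}\rangle\subseteq\mathrm{Am}_s(\kk/\QQ(i))\cap\kappa_{\k}$. It then remains to see that these kernels already exhaust $\mathrm{Am}_s(\kk/\QQ(i))$: one checks that $[\mathfrak Q]$ capitulates in $\kk(\sqrt q)$ because there $\mathfrak Q=(\sqrt q)$, that $[\mathfrak P_i\bar{\mathfrak P}_i]$ capitulates in $\kk(\sqrt{p_i})$ because there $\mathfrak P_i\bar{\mathfrak P}_i=(\sqrt{p_i})$, and that the remaining classes are reached through the relations among $[\mathfrak P_i]$ and $[\bar{\mathfrak P}_i]$ imposed by the complex conjugation action on $\mathbf{C}l_2(\kk)$; a comparison with $|\mathrm{Am}_s(\kk/\QQ(i))|$, obtained from Chevalley's ambiguous class number formula, then closes the inclusion. (Furuya's theorem \cite{Fu-77} already places the part of $\mathrm{Am}_s$ coming from $\kk/\QQ$ inside $\kappa_{\k}$; the explicit argument is needed for the classes that are strongly ambiguous for $\kk/\QQ(i)$ but not for $\kk/\QQ$.)

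The main obstacle is the unit arithmetic feeding (1): computing fundamental systems of units, and the Hasse unit indices $Q_{\KK_j}$, of the three octic CM-fields $\KK_j$, and then deciding precisely which of $i,\varepsilon$ lie in $N_{\KK_j/\kk}(E_{\KK_j})$ in order to evaluate $[E_{\kk}:N_{\KK_j/\kk}(E_{\KK_j})]$ exactly. The accompanying bookkeeping of the prime classes $[\mathfrak P_i],[\bar{\mathfrak P}_i],[\mathfrak Q]$ and their relations is the other place where real work is required, since it is what makes the principalizations in (2) and the order comparison in (3) precise.
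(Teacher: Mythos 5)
Your outline follows the same overall route as the paper: reduce the size of $\kappa_{\KK}$ to the unit--norm index, exhibit explicit principal generators among the ramified primes for the inclusion in $\mathrm{Am}_s(\kk/\QQ(i))$, and obtain part (3) by transitivity of capitulation along $\KK_j\subseteq\k$ together with a determination of the generators of $\mathrm{Am}_s(\kk/\QQ(i))$. Part (1) of your argument is essentially complete and in fact slicker than the paper's: the paper invokes the Heider--Schmithals formula (Theorem \ref{1}) and then evaluates $[E_\kk:N_{\KK_j/\kk}(E_{\KK_j})]$ case by case from the fundamental systems of units of the three octic CM-fields (Propositions \ref{27}--\ref{36}, Theorem \ref{226}), whereas your observation that $\langle-1,\varepsilon^2\rangle\subseteq N_{\KK/\kk}(E_{\KK})\subseteq E_\kk$ with $[E_\kk:\langle-1,\varepsilon^2\rangle]=4$ already forces $|\kappa_{\KK}|\in\{2,4,8\}$ with no unit computations at all.

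For parts (2) and (3), however, there is a genuine gap, and it sits exactly where the paper's real work is. The engine of the paper is Proposition \ref{9}: writing $p=\pi\pi'$ in $\ZZ[i]$ and using $N(\epsilon_p)=-1$ to factor $x\pm2i$, one produces $\alpha\in\kk(\sqrt p)$ with $\alpha^2=\pi\epsilon_p$, so that the \emph{individual} prime $\mathcal{H}$ above $\pi$ becomes principal in $\kk(\sqrt p)$. Your sketch only principalizes $\mathfrak{Q}=(\sqrt q)$ and the products $\mathfrak{P}_i\bar{\mathfrak{P}}_i=(\sqrt{p_i})$, i.e.\ the subgroup $\langle[\h\hh],[\hhh\hhhh]\rangle$, which is in general a proper subgroup of $\mathrm{Am}_s(\kk/\QQ(i))=\langle[\h],[\hh],[\hhh],[\hhhh]\rangle$ (see Proposition \ref{248}); note also $[\mathcal{Q}]=[\h\hh\hhh\hhhh]$, so $\mathfrak{Q}$ contributes nothing new. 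The appeal to ``relations imposed by complex conjugation'' cannot close this gap: conjugation sends $[\h]$ to $[\hh]$, and knowing that $[\h][\hh]$ capitulates while $[\hh]=[\h]^\tau$ says nothing about $[\h]$ alone. This is precisely the point of the paper --- Furuya's theorem only reaches the classes that are strongly ambiguous over $\QQ$, and the classes $[\mathcal{H}_j]$, strongly ambiguous only over $\QQ(i)$, are handled by the explicit square-root construction, not by formal Galois-module arguments. Relatedly, for part (2) you must still show that $\kappa_{\KK_j}$ contains nothing beyond ramified-prime classes; the paper does this by matching the explicitly capitulating classes against the exact value of $|\kappa_{\KK_j}|$ from Theorem \ref{226}, which in turn requires the full unit computations you deferred (Theorems \ref{239}, \ref{241}, \ref{243}, \ref{245}).
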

\section{\bf{Preliminary results}}
Let us  first collect some results that will be useful in what follows.\par
Let $k_j$, $1\leq j\leq 3$, be the three real quadratic subfields of a biquadratic bicyclic real number field $K_0$
and $\epsilon_j>1$ be the fundamental unit of $k_j$. Since $\alpha^2N_{K_0/{\bf Q}}(\alpha) =\prod_{j=1}^3N_{K_0/k_j}(\alpha)$ for any $\alpha\in K_0$,
the square of any unit of $K_0$ is in the group generated by the $\epsilon_j$'s, $ 1\leq j\leq 3$.
Hence, to determine a fundamental system of units of $K_0$ it suffices to determine which of the units in
$B:=\{\epsilon_1,\epsilon_2,\epsilon_3,\epsilon_1\epsilon_2,\epsilon_1\epsilon_3,\epsilon_2\epsilon_3,\epsilon_1\epsilon_2\epsilon_3\}$
are  squares in $K_0$ (see \cite{Wa-66} or \cite{Lo-00}).
Put  $K=K_0(i)$,  then  to determine a $\mathrm{F}.\mathrm{S}.\mathrm{U}$  of $K$,  we will use the following result (see \cite[p.18]{Az-99})  that the first author has deduced from a theorem of Hasse \cite[\S 21, Satz 15 ]{Ha-52}.
\begin{lem}\label{3}
Let $n\geq2$ be an integer   and $\xi_n$ a $2^n$-th primitive root of unity, then
$$\begin{array}{lllr}\xi_n = \dfrac{1}{2}(\mu_n + \lambda_ni), &\hbox{ where }&\mu_n =\sqrt{2 +\mu_{n-1}},&\lambda_n =\sqrt{2 -\mu_{n-1}}, \\
                               &    & \mu_2=0, \lambda_2=2 &\hbox{ and\quad } \mu_3=\lambda_3=\sqrt 2.
                                              \end{array}
                                            $$
Let $n_0$ be the greatest integer such that  $\xi_{n_0}$ is contained in $K$, $\{\epsilon'_1, \epsilon'_2, \epsilon'_3\}$ a $\mathrm{F}.\mathrm{S}.\mathrm{U}$  of $K_0$ and $\epsilon$ a unit of  $K_0$ such that $(2 + \mu_{n_0})\epsilon$ is a square in  $K_0$ $($if it exists$)$. Then a $\mathrm{F}.\mathrm{S}.\mathrm{U}$  of  $K$ is one of the following systems:
\begin{enumerate}[\rm1.]
\item $\{\epsilon'_1, \epsilon'_2, \epsilon'_3\}$ if $\epsilon$ does not exist,
\item $\{\epsilon'_1, \epsilon'_2, \sqrt{\xi_{n_0}\epsilon}\}$ if $\epsilon$
exists;  in this case $\epsilon = {\epsilon'_1}^{i_1}{
\epsilon'_2}^{i_2}\epsilon'_3$, where $i_1$, $i_2\in \{0, 1\}$ (up to a permutation).
\end{enumerate}
\end{lem}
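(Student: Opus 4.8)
The plan is to recognize that $K=K_0(i)$ is a CM-field with maximal real subfield $K^+=K_0$, and to deduce the statement from the unit-index theory of CM-fields (this is exactly Hasse's \cite[\S 21, Satz 15]{Ha-52}), making the criterion explicit through a cyclotomic computation. First I would dispatch the displayed formula for $\xi_n$ by induction: writing $\xi_n=\cos(2\pi/2^n)+i\sin(2\pi/2^n)$ and applying the half-angle identities $\cos(\theta/2)=\sqrt{(1+\cos\theta)/2}$ and $\sin(\theta/2)=\sqrt{(1-\cos\theta)/2}$ with $\theta=2\pi/2^{n-1}$, the relations $\mu_n=\sqrt{2+\mu_{n-1}}$ and $\lambda_n=\sqrt{2-\mu_{n-1}}$ fall out of the inductive hypothesis $\xi_{n-1}=\tfrac12(\mu_{n-1}+\lambda_{n-1}i)$, with the base cases $\xi_2=i$ and $\xi_3=\tfrac{\sqrt2}{2}(1+i)$. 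In particular this records the identities $\mu_n^2+\lambda_n^2=4$ and $\xi_n+\overline{\xi_n}=\mu_n$, which I will reuse below.

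Next I would set up the arithmetic of $K/K_0$. Since $K_0$ is totally real of degree $4$ and $K$ is totally imaginary of degree $8$, both $E_{K_0}$ and $E_K$ have unit rank $3$, and $W_K=\langle\xi_{n_0}\rangle$ is cyclic of order $2^{n_0}$ with $n_0\ge2$ (it is a $2$-group because $K/\QQ$ is a $2$-extension). Let $\sigma$ be the nontrivial element of $\mathrm{Gal}(K/K_0)$, that is complex conjugation, and consider $\phi\colon E_K\to W_K$, $\eta\mapsto\eta/\eta^{\sigma}$. One checks that $|\tau(\eta/\eta^\sigma)|=1$ at every embedding $\tau$, so $\phi(\eta)$ is a root of unity; that $\ker\phi=E_{K_0}$; that $\phi(W_K)=W_K^2$ since $\zeta^\sigma=\zeta^{-1}$ for $\zeta\in W_K$; and that $\phi^{-1}(W_K^2)=W_KE_{K_0}$. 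Hence $Q_K:=[E_K:W_KE_{K_0}]=[\phi(E_K):W_K^2]$ divides $[W_K:W_K^2]=2$, which recovers the dichotomy $Q_K\in\{1,2\}$.

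If $Q_K=1$, then $E_K=W_KE_{K_0}$, so modulo roots of unity a $\mathrm{F}.\mathrm{S}.\mathrm{U}$ of $K$ is one of $K_0$, giving case $1$, namely $\{\epsilon'_1,\epsilon'_2,\epsilon'_3\}$; this is precisely the situation in which no $\epsilon$ exists. If $Q_K=2$, then $\phi(E_K)=W_K$, so there is $\eta\in E_K$ with $\eta/\eta^{\sigma}=\xi_{n_0}$. Setting $\epsilon:=\eta\eta^{\sigma}=N_{K/K_0}(\eta)\in E_{K_0}$ yields
\[
\eta^2=\xi_{n_0}\,(\eta\eta^{\sigma})=\xi_{n_0}\epsilon,\qquad\text{i.e.}\qquad \eta=\sqrt{\xi_{n_0}\epsilon},
\]
where $\epsilon=|\tau(\eta)|^2$ is totally positive. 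Adjoining $\eta$ to two of the $\epsilon'_j$ then produces a $\mathrm{F}.\mathrm{S}.\mathrm{U}$ of $K$ of the form $\{\epsilon'_1,\epsilon'_2,\sqrt{\xi_{n_0}\epsilon}\}$, since $\eta^2=\xi_{n_0}\epsilon$ expresses the third generator in terms of $\eta$; reducing $\epsilon$ modulo $E_{K_0}^2$ and relabeling the $\epsilon'_j$ lets me take $\epsilon={\epsilon'_1}^{i_1}{\epsilon'_2}^{i_2}\epsilon'_3$ with $i_1,i_2\in\{0,1\}$, which is the source of the phrase ``up to a permutation''.

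Finally I would convert the abstract condition ``$\xi_{n_0}\epsilon$ is a square in $K$'' into the stated condition ``$(2+\mu_{n_0})\epsilon$ is a square in $K_0$''. Using $1+\xi_{n_0}=\mu_{n_0+1}\,\xi_{n_0+1}$ together with $(1+\xi_{n_0})(1+\overline{\xi_{n_0}})=2+(\xi_{n_0}+\overline{\xi_{n_0}})=2+\mu_{n_0}$ and $\mu_{n_0+1}=\sqrt{2+\mu_{n_0}}$, one gets $\sqrt{\xi_{n_0}}=\xi_{n_0+1}=(1+\xi_{n_0})/\sqrt{2+\mu_{n_0}}$, whence
\[
\sqrt{\xi_{n_0}\epsilon}=\frac{1+\xi_{n_0}}{2+\mu_{n_0}}\,\sqrt{(2+\mu_{n_0})\epsilon}.
\]
Because $1+\xi_{n_0}\in K$ and $2+\mu_{n_0}\in K_0$, the element $\sqrt{\xi_{n_0}\epsilon}$ lies in $K$ iff $\sqrt{(2+\mu_{n_0})\epsilon}$ does; and since $(2+\mu_{n_0})\epsilon$ is totally positive in $K_0$, it is a square in $K=K_0(i)$ exactly when it is already a square in $K_0$ (a totally positive real that is a square in $K_0(i)$ must have its root in $K_0$). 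This gives the asserted criterion and closes the case distinction. I expect the last paragraph to be the main obstacle: pinning down the exact equivalence of the two square conditions through the identity $1+\xi_{n_0}=\mu_{n_0+1}\xi_{n_0+1}$ and the total positivity of $\epsilon=N_{K/K_0}(\eta)$, together with the normalization of the representative $\epsilon$ to ${\epsilon'_1}^{i_1}{\epsilon'_2}^{i_2}\epsilon'_3$; the remainder is the standard CM unit-index argument.
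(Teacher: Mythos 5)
Your proposal cannot be matched against an in-paper argument, because the paper gives none: Lemma~\ref{3} is imported from \cite[p.18]{Az-99}, where it was deduced from Hasse's theorem \cite[\S 21, Satz 15]{Ha-52}. What you have written is, in essence, a correct reconstruction of that standard deduction: the half-angle induction for $\xi_n$, the reflection map $\phi(\eta)=\eta/\eta^\sigma$ with $\ker\phi=E_{K_0}$, $\phi(W_K)=W_K^2$ and $\phi^{-1}(W_K^2)=W_KE_{K_0}$, giving $Q_K=[\phi(E_K):W_K^2]\mid[W_K:W_K^2]=2$; the extraction $\eta^2=\xi_{n_0}N_{K/K_0}(\eta)$ when $Q_K=2$; and the identity $1+\xi_{n_0}=\mu_{n_0+1}\xi_{n_0+1}$ together with total positivity of $\epsilon=N_{K/K_0}(\eta)$ and of $2+\mu_{n_0}$ to translate ``$\xi_{n_0}\epsilon$ is a square in $K$'' into ``$(2+\mu_{n_0})\epsilon$ is a square in $K_0$'' (a totally positive element of $K_0$ that is a square in $K$ has its root in $K_0$, since a root with $\beta^\sigma=-\beta$ would make the element totally negative). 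This is exactly the Hasse mechanism, so the route is the expected one, carried out correctly.

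Two points should nevertheless be repaired or made explicit. First, your parenthetical claim that $W_K=\langle\xi_{n_0}\rangle$ is a $2$-group ``because $K/\QQ$ is a $2$-extension'' is false in general: $\zeta_3\in\QQ(\sqrt{-3})$, and in this very paper $q=3$ is permitted, in which case $\KK_3=\QQ(\sqrt{3},\sqrt{p_1p_2},i)$ contains $\zeta_{12}$. Your argument survives untouched, because $W_K$ is cyclic of even order, so $[W_K:W_K^2]=2$ and the odd part of $W_K$ consists of squares, whence $W_K/W_K^2$ is still generated by the class of $\xi_{n_0}$; but the statement as written must be corrected. Second, you use the maximality of $n_0$ twice without saying so: once to see that the existence of $\epsilon$ really forces $Q_K=2$, i.e.\ that $\sqrt{\xi_{n_0}\epsilon}\notin W_KE_{K_0}$ (otherwise, comparing squares and using $-1\in W_K^2$ for $n_0\geq2$, one manufactures a primitive $2^{n_0+1}$-th root of unity in $K$), and once to see that $\epsilon$ is not itself a square in $E_{K_0}$ (otherwise $\xi_{n_0}$ would be a square in $K$, same contradiction); the latter is precisely what legitimizes the normalization $\epsilon={\epsilon'_1}^{i_1}{\epsilon'_2}^{i_2}\epsilon'_3$ with $\epsilon'_3$ genuinely present, and note that $\epsilon=-\delta^2$ is likewise excluded by total positivity. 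With those two clarifications your proof is complete.
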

\begin{lem}[\cite{Az-00}, {Lemma 5}]\label{5}
Let $d>1$ be a square-free integer and $\epsilon_d=x+y\sqrt d$,
where $x$, $y$ are  integers or semi-integers. If $N(\epsilon_d)=1$, then $2(x+1)$, $2(x-1)$, $2d(x+1)$ and
$2d(x-1)$ are not squares in  $\QQ$.
\end{lem}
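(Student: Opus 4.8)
The plan is to exploit the fact that $\epsilon_d$ is \emph{fundamental}: the square conditions in the statement will each be shown to force $\epsilon_d$ to be a square in $\QQ(\sqrt d)$, and a fundamental unit can never be a square. The engine is the single identity $(x-1)(x+1)=dy^2$ coming from $N(\epsilon_d)=1$, which, I emphasize, uses only that $x,y$ are rational (the ``semi-integer'' case is handled with no extra work). First I would record the elementary consequences: writing $\epsilon_d=x+y\sqrt d>1$ with $x^2-dy^2=1$, the conjugate is $1/\epsilon_d\in(0,1)$, so $x=\tfrac12(\epsilon_d+\epsilon_d^{-1})>1$; in particular $2(x+1)$ and $2(x-1)$ are positive reals and $y\neq0$ (take $y>0$, replacing $\epsilon_d$ by its inverse conjugate if necessary). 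The norm relation becomes $(x-1)(x+1)=dy^2$, whence the crucial product identity
\[
2(x+1)\cdot 2(x-1)=4(x^2-1)=d(2y)^2 .
\]

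Next I would use this identity to collapse all four cases into one. If any of $2(x+1)$, $2(x-1)$, $2d(x+1)$, $2d(x-1)$ is a rational square, then exactly one of $2(x+1)$, $2(x-1)$ is itself a rational square while the displayed relation forces the other to equal $d$ times a rational square. Consequently $\sqrt{2(x+1)}$ and $\sqrt{2(x-1)}$ lie, one in $\QQ$ and one in $\sqrt d\,\QQ$, so both belong to $\QQ(\sqrt d)$. I would then introduce
\[
\eta=\tfrac12\bigl(\sqrt{2(x+1)}+\sqrt{2(x-1)}\bigr)\in\QQ(\sqrt d)
\]
with positive real roots, and verify by direct expansion, using $x^2-1=dy^2$ and $y>0$, that
\[
\eta^2=\tfrac14\bigl[\,4x+2\sqrt{4(x^2-1)}\,\bigr]=x+\sqrt{x^2-1}=x+y\sqrt d=\epsilon_d .
\]
Thus the square hypothesis produces $\eta\in\QQ(\sqrt d)$ with $\eta^2=\epsilon_d$.

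Finally I would derive the contradiction. Since $\eta^2=\epsilon_d$ is a unit of $\OO=\OO_{\QQ(\sqrt d)}$, the element $\eta$ is integral over the integrally closed ring $\OO$, hence $\eta\in\OO$; and $N(\eta)=\pm1$ because $N(\eta)^2=N(\epsilon_d)=1$, so $\eta\in\OO^{\times}$. But $\eta=\sqrt{\epsilon_d}$ satisfies $1<\eta<\epsilon_d$, whereas no unit of a real quadratic field lies strictly between $1$ and the fundamental unit $\epsilon_d$. This contradiction shows that none of the four quantities can be a square in $\QQ$, proving the lemma.

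The step I expect to be the main obstacle is the integrality argument: one must be sure that $\eta$ is an algebraic \emph{integer} (hence a genuine unit competing with $\epsilon_d$), not merely an element of $\QQ(\sqrt d)$; this is exactly where integral closedness of $\OO$ enters. Secondary care is needed in choosing the signs of the nested square roots so that the cross term yields $+y\sqrt d$ rather than $-y\sqrt d$, and in checking that the reduction of the two ``$2d(x\pm1)$'' cases to the two ``$2(x\pm1)$'' cases via the product identity is bidirectional; both are routine once the identity $2(x+1)\cdot 2(x-1)=d(2y)^2$ is in hand.
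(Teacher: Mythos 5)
Your proof is correct. Note that this paper does not actually prove Lemma \ref{5}: it is imported verbatim from \cite{Az-00}, Lemma 5, and the classical argument there (which the present paper mimics in its own computations, e.g.\ in the proofs of Lemma \ref{3:105} and Propositions \ref{27} and \ref{29}) runs through unique factorization in $\ZZ$: since $(x+1)(x-1)=dy^2$ with $\gcd(x+1,x-1)\mid 2$, the hypothesis that $2(x\pm1)$ or $2d(x\pm1)$ is a square forces $x\pm1=2u^2$, $x\mp1=2dv^2$ with $y=2uv$, whence $\epsilon_d=(u+v\sqrt d)^2$, contradicting fundamentality. You reach the same contradiction---$\sqrt{\epsilon_d}$ would be a unit strictly between $1$ and $\epsilon_d$---but you replace the integer bookkeeping by the single product identity $2(x+1)\cdot 2(x-1)=d(2y)^2$ together with integral closedness of $\mathcal{O}_{\QQ(\sqrt d)}$. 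This buys two genuine improvements: the half-integer case (which arises for $d\equiv1\pmod4$ and requires modifying the gcd analysis in $\ZZ$) is absorbed with no case split, since your identity uses only rationality of $x,y$; and you exclude \emph{rational} squares rather than merely perfect-square integers, which is the form actually asserted in the statement. The two points where a sloppier version of your argument would leak---verifying that $\eta$ is an algebraic integer (hence a genuine unit competing with $\epsilon_d$), and choosing the positive real roots so that the cross term gives $+y\sqrt d$---are both handled. The only cosmetic remark: for the fundamental unit $\epsilon_d>1$ with $N(\epsilon_d)=1$ one automatically has $x=\tfrac12(\epsilon_d+\epsilon_d^{-1})>1$ and $y>0$, so the parenthetical replacement of $\epsilon_d$ by a conjugate is never needed; likewise your ``exactly one'' dichotomy silently uses that $d>1$ is square-free (so $d$ is not a rational square) and that $x-1\neq 0$, both of which your setup already guarantees.
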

\begin{lem}[\cite{Az-00}, {Lemma 6}]\label{1:048}
Let $q\equiv-1\pmod4$ be a prime and $\epsilon_q=x+y\sqrt q$ be the fundamental unit of  $\QQ(\sqrt q)$. Then $x$ is an even integer, $x\pm1$ is a square in  $\NN$ and $2\epsilon_q$ is a square in $\QQ(\sqrt q)$.
\end{lem}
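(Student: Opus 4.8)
The plan is to base everything on the norm of $\epsilon_q$ and the factorisation of $x^2-1$. First I would note that $N(\epsilon_q)=+1$: if instead $N(\epsilon_q)=-1$, then $x^2-qy^2=-1$ gives $x^2\equiv-1\pmod q$, so $-1$ would be a quadratic residue modulo $q$, which is impossible for $q\equiv-1\pmod4$. Because $q\equiv3\pmod4$ the ring of integers of $\QQ(\sqrt q)$ is $\ZZ[\sqrt q]$, so $x,y\in\ZZ$, and normalising $\epsilon_q>1$ we may take $x,y>0$. The relation $N(\epsilon_q)=1$ then reads $(x-1)(x+1)=qy^2$, which is the identity driving the whole argument.

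Next I would show $x$ is even by excluding the odd case through Lemma~\ref{5}. If $x$ were odd, then $x-1=2u$ and $x+1=2v$ with $v=u+1$, so $\gcd(u,v)=1$; substituting gives $4uv=qy^2$, and since $q$ is odd this forces $y$ even, say $y=2w$, whence $uv=qw^2$. Coprimality of $u,v$ and primality of $q$ make the factor prime to $q$ a perfect square, so one of $2(x-1)=4u$ and $2(x+1)=4v$ is a square in $\QQ$. This contradicts Lemma~\ref{5} (which applies since $N(\epsilon_q)=1$), so $x$ is even; as $x\ge2$, the numbers $x-1$ and $x+1$ are positive, odd, and coprime.

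For the remaining assertions I would exploit this coprimality in $(x-1)(x+1)=qy^2$. Comparing prime valuations and using that $q$ is prime, exactly one factor carries the odd power of $q$: the factor prime to $q$ has only even valuations and is a perfect square, while the other equals $q$ times a perfect square. Thus either $x-1=a^2,\ x+1=qb^2$ or $x-1=qa^2,\ x+1=b^2$ with $y=ab$; in particular one of $x\pm1$ is a square in $\NN$, giving the second claim. For the last claim, $x^2-1=qy^2$ yields $\sqrt{(x+1)(x-1)}=y\sqrt q$, so
\[
\big(\sqrt{x+1}+\sqrt{x-1}\big)^2=2x+2y\sqrt q=2\epsilon_q.
\]
In each of the two cases one of $\sqrt{x+1},\sqrt{x-1}$ is a rational integer and the other an integer multiple of $\sqrt q$, so $\alpha:=\sqrt{x+1}+\sqrt{x-1}\in\QQ(\sqrt q)$ and $\alpha^2=2\epsilon_q$; hence $2\epsilon_q$ is a square in $\QQ(\sqrt q)$.

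The step I expect to be the only real obstacle is forcing $x$ to be even: the natural tool is exactly the list of non-squares in Lemma~\ref{5}, and the argument has to be arranged so that each odd-$x$ possibility manufactures one of the forbidden squares $2(x-1)$ or $2(x+1)$.
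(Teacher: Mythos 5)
Your proof is correct, and all its steps check out: the norm computation ($-1$ is a non-residue mod $q\equiv3\pmod4$, forcing $N(\epsilon_q)=1$), the exclusion of odd $x$ by manufacturing one of the forbidden squares $2(x\pm1)$ of Lemma \ref{5} from the coprime factorization $4uv=qy^2$, the coprimality argument giving $x\mp1=a^2$, $x\pm1=qb^2$, and the closing identity $\bigl(\sqrt{x+1}+\sqrt{x-1}\bigr)^2=2x+2y\sqrt q=2\epsilon_q$ with $\sqrt{x+1}+\sqrt{x-1}\in\QQ(\sqrt q)$. Note that the paper itself offers no proof to compare against---the lemma is imported verbatim from \cite{Az-00} (Lemma 6)---but your argument is precisely the decomposition-uniqueness technique that the paper deploys for the analogous statements it does prove, e.g. Lemma \ref{3:105} and the case analysis in the proof of Proposition \ref{27}, so it is a faithful self-contained reconstruction rather than a genuinely different route.
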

\begin{lem}[\cite{Az-00}, {Lemma 7}] \label{4}
Let  $p$ be an odd  prime and $\epsilon_{2p}=x+y\sqrt{2p}$.
If  $N(\epsilon_{2p})=1$, then $x\pm1$ is a square in $\NN$ and $2\epsilon_{2p}$
is a square in $\QQ(\sqrt{2p})$.
\end{lem}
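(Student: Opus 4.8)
The plan is to reduce everything to the Pell equation attached to the hypothesis and then use the fundamentality of $\epsilon_{2p}$ to throw out the unwanted factorizations. Write $\epsilon_{2p}=x+y\sqrt{2p}$; since $2p\equiv 2\pmod 4$ we have $\OO_{\QQ(\sqrt{2p})}=\ZZ[\sqrt{2p}]$, so $x,y$ are ordinary integers, and $N(\epsilon_{2p})=1$ reads $x^2-2py^2=1$. First I would record the parities: reducing modulo $2$ forces $x$ odd, and then reducing $x^2-1=2py^2$ modulo $8$ (using $x^2\equiv 1\pmod 8$ for odd $x$, together with $p$ odd) forces $y$ even, say $y=2m$.

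The heart of the argument is the factorization $(x-1)(x+1)=2py^2$. As $x$ is odd, $\gcd(x-1,x+1)=2$, so putting $u=(x-1)/2$ and $v=(x+1)/2$ gives two coprime consecutive integers with $uv=2pm^2$. Because $2p$ is squarefree and $\gcd(u,v)=1$, the squarefree parts of $u$ and $v$ must multiply to $2p$; hence $(u,v)$ has exactly one of four shapes, indexed by the ordered factorization $2p=d_u d_v$ with $(d_u,d_v)\in\{(1,2p),(2p,1),(2,p),(p,2)\}$ and $u=d_u r^2$, $v=d_v s^2$ for suitable nonnegative integers $r,s$.

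The key step, and the one that really uses the hypotheses, is to eliminate the two bad shapes. If $(d_u,d_v)=(1,2p)$, then $v-u=1$ becomes $r^2-2ps^2=-1$, exhibiting a unit $r+s\sqrt{2p}$ of norm $-1$; this is impossible when $N(\epsilon_{2p})=1$, since then every unit $\pm\epsilon_{2p}^{\,n}$ has norm $+1$. If $(d_u,d_v)=(2p,1)$, then $v-u=1$ gives $s^2-2pr^2=1$, and a short computation (using $s^2+2pr^2=2s^2-1=x$ and the $\sqrt{2p}$-coefficient $2rs=y$) shows $(s+r\sqrt{2p})^2=x+y\sqrt{2p}=\epsilon_{2p}$, contradicting fundamentality. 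I expect this elimination to be the main obstacle, chiefly in checking that the auxiliary unit is nontrivial (that $r\ge 1$, which holds because $\epsilon_{2p}\ne 1$) so that the square root is a genuine smaller unit.

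In the two surviving shapes the conclusion falls out explicitly, so no further estimates are needed. When $(d_u,d_v)=(2,p)$ one has $x-1=2u=(2r)^2$, a square in $\NN$, and the relation $ps^2=2r^2+1$ (which is just $v=u+1$) yields $2\epsilon_{2p}=(2r+s\sqrt{2p})^2$; symmetrically, when $(d_u,d_v)=(p,2)$ one gets $x+1=(2s)^2$ and $2\epsilon_{2p}=(2s+r\sqrt{2p})^2$. In either case $x\pm1$ is a square in $\NN$ and $2\epsilon_{2p}$ is a square in $\QQ(\sqrt{2p})$, proving the lemma. As a consistency check, $x-1$ and $x+1$ cannot both be squares, since two even perfect squares cannot differ by $2$.
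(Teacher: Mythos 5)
Your proof is correct. A point of comparison: the paper does not prove this lemma at all --- it is quoted verbatim from \cite{Az-00}, Lemma 7 --- so the relevant benchmark is the technique the paper uses everywhere for such statements (e.g.\ in Lemma \ref{3:105} and Proposition \ref{27}): factor $(x-1)(x+1)=2py^2$, invoke unique factorization in $\ZZ$ to list the possible shapes $x\pm1=d_1y_1^2$, $x\mp1=d_2y_2^2$, and then discard the shapes forbidden by Lemma \ref{5}, which asserts precisely that $2(x\pm1)$ and $2\cdot 2p\,(x\pm1)$ are not squares when $N(\epsilon_{2p})=1$. Your decomposition step is identical (your four ordered factorizations $(d_u,d_v)$ of $2p$ with $u=(x-1)/2$, $v=(x+1)/2$ correspond exactly to that list), but where the paper's toolkit would simply cite Lemma \ref{5} to kill the cases $(1,2p)$ and $(2p,1)$, you eliminate them from first principles: the shape $(1,2p)$ manufactures a unit $r+s\sqrt{2p}$ of norm $-1$, impossible since $N(\epsilon_{2p})=1$, and the shape $(2p,1)$ exhibits $\epsilon_{2p}=(s+r\sqrt{2p})^2$ as the square of a smaller unit, contradicting fundamentality (and you correctly check the nontriviality $r\ge1$, since $r=0$ forces $x=1$, i.e.\ $\epsilon_{2p}=1$). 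In effect you reprove the relevant instances of Lemma \ref{5} inside your argument, which makes your proof self-contained at the cost of a little extra unit-group reasoning; the remaining computations --- $x-1=(2r)^2$ with $2\epsilon_{2p}=\bigl(2r+s\sqrt{2p}\bigr)^2$ in the case $(2,p)$, and symmetrically for $(p,2)$ --- check out, as do the preliminary parity reductions ($x$ odd, $y$ even, $\OO_{\QQ(\sqrt{2p})}=\ZZ[\sqrt{2p}]$ since $2p\equiv2\pmod4$).
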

\begin{lem}[\cite{Az-99}, {3.(1) p.19}]\label{6}
Let $d>2$ be a square-free integer and $k=\QQ(\sqrt d,i)$, put $\epsilon_d=x+y\sqrt d$.
\begin{enumerate}[\rm\indent1.]
  \item If $N(\epsilon_d)=-1$, then $\{\epsilon_d\}$ is a $\mathrm{F}.\mathrm{S}.\mathrm{U}$  of $k$.
  \item If $N(\epsilon_d)=1$, then $\{\sqrt{i\epsilon_d}\}$ is a $\mathrm{F}.\mathrm{S}.\mathrm{U}$  of $k$ if
   and only if $x\pm1$ is a square in $\NN$ i.e. $2\epsilon_d$ is a square in $\QQ(\sqrt d)$. Else $\{\epsilon_d\}$ is a $\mathrm{F}.\mathrm{S}.\mathrm{U}$  of $k$ $($this result is also in \cite{Kub-56}$)$.
\end{enumerate}
\end{lem}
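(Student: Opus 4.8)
The plan is to reduce the statement to a single index computation, namely the Hasse unit index $Q_k=[E_k:W_kE_{k^+}]$ of the CM-field $k=\qq(i)$, and then to decide precisely when a square root of $\epsilon_d$ twisted by a power of $i$ lies in $k$.

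First I would record the structure of $E_k$. Since $k$ is a totally imaginary quadratic extension of the totally real field $k^+=\qq$, both $k$ and $k^+$ have unit rank $1$, so a $\mathrm{F}.\mathrm{S}.\mathrm{U}$ of $k$ consists of a single unit; moreover $W_k=\langle i\rangle$, because for $d>2$ square-free $k$ contains no primitive $8$-th root of unity (that would force $\sqrt2\in k$, i.e. $d=2$). Hence $W_kE_{k^+}=\langle i,\epsilon_d\rangle$ and $E_k=\langle i,\eta\rangle$ for a fundamental unit $\eta$, with $Q_k\in\{1,2\}$. If $Q_k=1$ then $\{\epsilon_d\}$ is a $\mathrm{F}.\mathrm{S}.\mathrm{U}$; if $Q_k=2$ then comparing the free parts in the rank-$1$ quotient $E_k/W_k$ gives $\eta^2=i^{a}\epsilon_d$ for some $a\in\{0,1,2,3\}$, so $\{\sqrt{i^a\epsilon_d}\}$ is a $\mathrm{F}.\mathrm{S}.\mathrm{U}$. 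This is the dichotomy underlying Lemma \ref{3}, now read off with the quadratic base $\qq$, so that $n_0=2$, $\mu_{n_0}=0$ and $\xi_{n_0}=i$.

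Next I would pin down for which $a$ the element $i^a\epsilon_d$ can be a square in $k$. Writing a putative square root as $u+vi$ with $u,v\in\qq$ and expanding $(u+vi)^2=u^2-v^2+2uv\,i$, I match real and imaginary parts against $i^a\epsilon_d$. For $a=0$ or $a=2$ this forces $\epsilon_d$ (respectively $-\epsilon_d$) to be a square in the real field $\qq$, which is impossible since $\epsilon_d$ is the fundamental unit. For $a=1$ or $a=3$ the equations reduce to $u=\pm v$ together with $2u^2=\epsilon_d$, i.e. to $2\epsilon_d$ being a square in $\qq$. Thus $Q_k=2$ if and only if $2\epsilon_d$ is a square in $\qq$, and in that case $\eta=\sqrt{i\epsilon_d}$ is a fundamental unit; this establishes the ``$\{\sqrt{i\epsilon_d}\}$'' alternative and ties it to the condition ``$2\epsilon_d$ is a square in $\qq$''.

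Finally I would dispatch the two norm cases. If $N(\epsilon_d)=-1$, then $N(2\epsilon_d)=4N(\epsilon_d)=-4<0$ cannot be the square of a rational number, so $2\epsilon_d$ is not a square in $\qq$; hence $Q_k=1$ and $\{\epsilon_d\}$ is a $\mathrm{F}.\mathrm{S}.\mathrm{U}$, which is part (1). If $N(\epsilon_d)=1$, I would translate ``$2\epsilon_d$ is a square'' into the elementary condition on $x$: writing $2\epsilon_d=(\alpha+\beta\sqrt d)^2$ with $\alpha,\beta\in\QQ$ yields $\alpha^2+d\beta^2=2x$ and $\alpha\beta=y$, whence $(\alpha^2-d\beta^2)^2=(2x)^2-4dy^2=4N(\epsilon_d)=4$, so $\alpha^2-d\beta^2=\pm2$ and therefore $\alpha^2=x\pm1$, $d\beta^2=x\mp1$. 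Hence $2\epsilon_d$ is a square in $\qq$ exactly when $x+1$ or $x-1$ is a perfect square in $\NN$, the converse following by setting $\beta=y/\alpha$ and using $(x-1)(x+1)=dy^2$; this completes part (2). The main obstacle is the middle step: controlling precisely which twist $i^a\epsilon_d$ is a square, i.e. ruling out $a\in\{0,2\}$ by fundamentality and positivity and showing that $a\in\{1,3\}$ both collapse to the single criterion that $2\epsilon_d$ lie in $(\qq^{\times})^2$; the ensuing equivalence with ``$x\pm1$ a perfect square'' is then a direct computation, which in passing also forces $x$ to be a rational integer.
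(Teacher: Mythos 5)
The paper gives no proof of Lemma \ref{6} at all: it is imported from \cite{Az-99}, where it is deduced from Hasse's Satz 15 by exactly the route you take, namely the CM dichotomy $Q_k=[E_k:W_kE_{k^+}]\in\{1,2\}$ combined with deciding which twists $i^a\epsilon_d$ can be squares in $k$; your argument is correct and matches that intended one, including the exclusion of $a\in\{0,2\}$, the collapse of $a\in\{1,3\}$ to the single criterion $2\epsilon_d\in(\QQ(\sqrt d)^{\times})^2$, and the translation into ``$x+1$ or $x-1$ is a perfect square'' via $\alpha^2-d\beta^2=\pm2$, which as you note also rules out semi-integral $x$. The only inaccuracy is your claim that $W_k=\langle i\rangle$ for every square-free $d>2$: it fails for $d=3$, where $\sqrt{-3}\in k$ gives $W_k=\langle\zeta_{12}\rangle$ (primitive $8$-th roots are not the only danger); since $\zeta_3=\zeta_6^2$ is a square in $k$ there, the extra twists reduce to the cases you already treated and the lemma's conclusion is unaffected, but the step should either handle or explicitly set aside $d=3$ (harmless for the paper, where $d=p_1p_2q>3$).
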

 \begin{lem}\label{3:105}
 Let $d\equiv1\pmod4$ be a positive square free integer and   $\varepsilon_d=x+y\sqrt d$ be the fundamental unit of  $\QQ(\sqrt d)$. Assume   $N(\varepsilon_d)=1$, then
 \begin{enumerate}[\rm\indent1.]
   \item $x+1$ and $x-1$ are not squares in  $\NN$ i.e. $2\varepsilon_{d}$ is not a square in  $\QQ(\sqrt{d})$.
   \item For all prime  $p$ dividing   $d$, $p(x+1)$ and $p(x-1)$ are not squares in  $\NN$.
 \end{enumerate}
 \end{lem}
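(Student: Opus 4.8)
The plan is to reduce all three assertions to a single parity computation for the $2$-adic valuation of $x\pm1$. Writing $N(\varepsilon_d)=1$ as $x^2-dy^2=1$, i.e. $(x-1)(x+1)=dy^2$, I would first dispose of the case in which $x,y$ are genuine semi-integers: there $x\pm1$ and $p(x\pm1)$ are not rational integers at all, so the two ``square in $\NN$'' statements hold vacuously, and only the assertion about $2\varepsilon_d$ remains (handled uniformly below). Thus one may assume $x,y\in\ZZ$. Reducing $x^2-dy^2=1$ modulo $4$ and using $d\equiv1\pmod4$ shows that $x$ must be odd: if $x$ were even then $dy^2=x^2-1\equiv3\pmod4$, which is impossible because $d\equiv1$ forces $dy^2\equiv y^2\in\{0,1\}\pmod4$. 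Hence $x-1$ and $x+1$ are consecutive even integers.

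The crux is then the identity $v_2(x-1)+v_2(x+1)=v_2(x^2-1)=v_2(dy^2)=2v_2(y)$, which is even since $d$ is odd. Because $x$ is odd, $x-1$ and $x+1$ are consecutive even integers, so exactly one of them is $\equiv2\pmod4$ and has $2$-adic valuation $1$; the even total then forces the other valuation to be odd as well. Consequently both $v_2(x-1)$ and $v_2(x+1)$ are odd. An integer with odd $2$-adic valuation is not a perfect square, which is exactly part (1); and for an odd prime $p\mid d$ one has $v_2\big(p(x\pm1)\big)=v_2(x\pm1)$, still odd, giving part (2). Both parts thus fall out of the same observation.

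For the parenthetical reformulation I would argue, uniformly in both cases, that $2\varepsilon_d$ is not a square in $\qq$ by an ideal-theoretic obstruction. If $2\varepsilon_d=\eta^2$ with $\eta\in\qq$, then $\eta$ is integral over $\ZZ$ (it is a root of $X^2-2\varepsilon_d\in\OO_{\qq}[X]$), so $\eta\in\OO_{\qq}$ and $(\eta)^2=(2\varepsilon_d)=(2)$, the last step because $\varepsilon_d$ is a unit. But $d\equiv1\pmod4$ is squarefree, so $\operatorname{disc}\QQ(\sqrt d)=d$ is odd, $2$ is unramified in $\qq$, and therefore $(2)$ is either a prime or a product of two distinct primes --- in neither case the square of an ideal. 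This contradiction proves the claim. Alternatively, the implication ``$x\pm1$ a perfect square $\Rightarrow 2\varepsilon_d$ a square in $\qq$'' is elementary, so part (1) already yields this by contraposition.

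I expect no conceptual obstacle here: once the parity of $v_2(x\pm1)$ is pinned down, all three statements are immediate. The points that require care are purely organizational --- not overlooking the semi-integer case (which is precisely where $d\equiv1\pmod4$ permits a half-integral $\varepsilon_d$, as for $d=21$), and justifying that $\min\{v_2(x-1),v_2(x+1)\}=1$ so that the even sum of the two valuations splits into two odd summands.
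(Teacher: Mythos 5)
Your proof is correct, and it takes a genuinely different route from the paper's. For part 1 the paper does not argue directly at all: it invokes an external computation of the Hasse unit index of $\QQ(\sqrt{d},i)$ (equal to $1$ when $d\equiv1\pmod4$) together with a cited criterion identifying that index with the squareness of $2\varepsilon_d$ in $\QQ(\sqrt d)$. For part 2 the paper assumes $p(x\pm1)$ is a square, extracts the factorization $x\pm1=py_1^2$, $x\mp1=d'y_2^2$ from $(x-1)(x+1)=dy^2$, and derives the contradiction $y_2^2-y_1^2\equiv2\pmod4$. You instead prove everything from the single observation that $v_2(x-1)+v_2(x+1)=2v_2(y)$ is even while $\min\{v_2(x-1),v_2(x+1)\}=1$ (since $x$ is odd, which you correctly deduce from $d\equiv1\pmod4$), so both valuations are odd and neither $x\pm1$ nor $m(x\pm1)$ for any odd $m$ can be a perfect square. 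This is more elementary and self-contained than the paper's part 1, it unifies the two parts, and it proves a stronger statement than part 2 (arbitrary odd multipliers, not just primes dividing $d$); your explicit treatment of the semi-integral case and your ramification argument that $(2)$ cannot be the square of an ideal in $\QQ(\sqrt d)$ also patch over points the paper leaves implicit. What the paper's route buys in exchange is brevity (given the cited unit-index results) and, in part 2, a factorization template $x\pm1=py_1^2$, $x\mp1=d'y_2^2$ that is reused constantly in the later propositions, so its proof doubles as a warm-up for Section 3.
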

 \begin{proof}
 $1.$  As $d\equiv 1 \pmod 4$, then by  \cite[Corollaire 3.2]{AzTa-08} the unit index of  $\QQ(\sqrt{d},i)$ is equal to $1$,  hence  by \cite[Applications (ii)]{Az-99}
  we get that  $2\varepsilon_{d}$ is not a square in  $\QQ(\sqrt{d})$, this is equivalent to  $x+1$ and $x-1$ are not squares in  $\NN$.\\
 \indent $2.$ Assume $p(x+1)$ or $p(x-1)$ is a square in  $\NN$, then, by the decomposition uniqueness in $\ZZ$, there exist $y_1$, $y_2$ in $\ZZ$ such that
   $$\left\{\begin{array}{ll}
 x\pm1=py_1^2,\\
 x\mp1=d'y_2^2;
 \end{array}\right.
 \text{ and  }\left\{\begin{array}{ll}
 y=y_1y_2,\\
 d=pd';
 \end{array}\right.$$
  thus $p(x\pm1)=p^2y_1^2$ and  $p(x\mp1)=p^2y_1^2\mp2p$. This in turn yields that $p^2(x^2-1)=p^2y_1^2(p^2y_1^2\mp2p)$; as  $x^2-1=y^2d$, so we get $y^2d=y_1^2(p^2y_1^2\mp2p)$, and $y_2^2d=p^2y_1^2\mp2p$. Since $d\equiv1\pmod4$ and $p\equiv\pm1\pmod4$, we deduce that $\mp2\equiv y_1^2-y_2^2 \pmod4$. On the other hand, we know  that $a^2\equiv 0$ or $1 \pmod4$ for all $a\in\ZZ$, thus $\mp2\equiv0$,  $1$ or $-1\pmod4$. Which is absurd.
 \end{proof}
\section{\textbf{$\mathrm{F}.\mathrm{S}.\mathrm{U}$  OF SOME CM-FIELDS}}
As  $\kk=\QQ(\sqrt{p_1p_2q}, i)$, so $\kk$ admits three unramified quadratic extensions that are abelian over $\QQ$, which are $\KK_1=\kk(\sqrt{p_1})=\QQ(\sqrt{p_1}, \sqrt{p_2q}, i)$,   $\KK_2=\kk(\sqrt{p_2})=\QQ(\sqrt{p_2}, \sqrt{p_1q}, i)$ and $\KK_3=\kk(\sqrt{q})=\QQ(\sqrt{q}, \sqrt{p_1p_2}, i)$. Put $\epsilon_{p_1p_2q}=x+y\sqrt{p_1p_2q}$. In what follows, we determine the $\mathrm{F}.\mathrm{S}.\mathrm{U}$'s of $\KK_j$, $1\leq j\leq3$.
\subsection{\textbf{$\mathrm{F}.\mathrm{S}.\mathrm{U}$  of the field  $\KK_1$}}
Let $\KK_1=\kk(\sqrt{p_1})=\QQ(\sqrt{p_1}, \sqrt{p_2q}, i)$.
\begin{propo}\label{27}
Keep the previous notations and  put $\epsilon_{p_2q}=a+b\sqrt {p_2q}$.
\begin{enumerate}[\upshape\indent1.]
\item If $a\pm1$ and $(x\pm1$ or $p_1(x\pm1))$  are squares in  $\NN$,  then $\left\{\epsilon_{p_1},  \epsilon_{p_2q}, \sqrt{\epsilon_{p_2q}\epsilon_{p_1p_2q}}\right\}$ is a $\mathrm{F}.\mathrm{S}.\mathrm{U}$ of  $\KK_1^+$ and that of  $\KK_1$ is    $\left\{\epsilon_{p_1}, \sqrt{i\epsilon_{p_2q}}, \sqrt{\epsilon_{p_2q}\epsilon_{p_1p_2q}}\right\}$; moreover,  $Q_{\KK_1}=2$.
\item If $x\pm1$ or $p_1(x\pm1)$  is a square in $\NN$ and  $a\pm1$ is not, then $\left\{\epsilon_{p_1},  \epsilon_{p_2q}, \epsilon_{p_1p_2q}\right\}$ is a $\mathrm{F}.\mathrm{S}.\mathrm{U}$ $\KK_1^+$ and that of   $\KK_1$ is  $\left\{\epsilon_{p_1},  \epsilon_{p_2q}, \sqrt{i\epsilon_{p_1p_2q}}\right\}$; moreover,  $Q_{\KK_1}=2$.
\item If $a\pm1$ and $2p_1(x\pm1)$  are squares in $\NN$,  then $\left\{\epsilon_{p_1},  \epsilon_{p_2q}, \sqrt{\epsilon_{p_1p_2q}}\right\}$ is a $\mathrm{F}.\mathrm{S}.\mathrm{U}$ of $\KK_1^+$ and that of   $\KK_1$ is  $\left\{\epsilon_{p_1}, \sqrt{i\epsilon_{p_2q}}, \sqrt{\epsilon_{p_1p_2q}}\right\}$; moreover,  $Q_{\KK_1}=2$.
\item If $2p_1(x\pm1)$  is a square in $\NN$ and $a\pm1$ is not, then $\left\{\epsilon_{p_1},  \epsilon_{p_2q}, \sqrt{\epsilon_{p_1p_2q}}\right\}$ is a $\mathrm{F}.\mathrm{S}.\mathrm{U}$ of both of $\KK_1^+$ and  $\KK_1$; moreover,  $Q_{\KK_1}=1$.
\item $\left\{\epsilon_{p_1},  \epsilon_{p_2q}, \epsilon_{p_1p_2q}\right\}$ is a  $\mathrm{F}.\mathrm{S}.\mathrm{U}$ of $\KK_1^+$,  $\left\{\epsilon_{p_1},  \sqrt{i\epsilon_{p_2q}}, \epsilon_{p_1p_2q}\right\}$ is a  $\mathrm{F}.\mathrm{S}.\mathrm{U}$ of $\KK_1$ and   $Q_{\KK_1}=2$  if one of the following assertions is satisfied:
\begin{enumerate}[\upshape\indent i.]
\item  $a\pm1$ and $(p_2(x\pm1)$ or $2p_2(x\pm1))$  are squares in $\NN$.
\item  $a\pm1$ and $(q(x\pm1)$ or $2q(x\pm1))$  are squares in $\NN$.
\end{enumerate}
 \item $\left\{\epsilon_{p_1},  \epsilon_{p_2q}, \sqrt{\epsilon_{p_2q}\epsilon_{p_1p_2q}}\right\}$ is a  $\mathrm{F}.\mathrm{S}.\mathrm{U}$ of both of $\KK_1^+$ and $\KK_1$, and     $Q_{\KK_1}=1$  if one of the following assertions is satisfied:
\begin{enumerate}[\upshape\indent i.]
\item  $p_2(a\pm1)$ and $(p_2(x\pm1)$ or $q(x\pm1))$  are squares in $\NN$.
\item  $2p_2(a\pm1)$ and $(2p_2(x\pm1)$ or $2q(x\pm1))$  are squares in $\NN$.
\end{enumerate}
 \item $\left\{\epsilon_{p_1},  \epsilon_{p_2q}, \epsilon_{p_1p_2q}\right\}$ is a  $\mathrm{F}.\mathrm{S}.\mathrm{U}$ of $\KK_1^+$,  $\left\{\epsilon_{p_1},  \epsilon_{p_2q}, \sqrt{i\epsilon_{p_2q}\epsilon_{p_1p_2q}}\right\}$ is a  $\mathrm{F}.\mathrm{S}.\mathrm{U}$  of $\KK_1$ and $Q_{\KK_1}=2$  if one of the following assertions is satisfied:
\begin{enumerate}[\upshape\indent i.]
\item  $p_2(a\pm1)$ and $(2p_2(x\pm1)$ or $2q(x\pm1))$  are squares in $\NN$.
\item  $2p_2(a\pm1)$ and $(p_2(x\pm1)$ or $q(x\pm1))$  are squares in  $\NN$.
\end{enumerate}
\end{enumerate}
\end{propo}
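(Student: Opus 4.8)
The plan is to settle the real field $\KK_1^+=\QQ(\sqrt{p_1},\sqrt{p_2q})$ first, by the criterion of Wada recalled before Lemma~\ref{3}, and then to climb to $\KK_1=\KK_1^+(i)$ by Lemma~\ref{3}. First I would fix the three norms. Since $p_1\equiv1\pmod4$ one has $N(\epsilon_{p_1})=-1$, whereas $q\equiv-1\pmod4$ divides both $p_2q$ and $p_1p_2q$, so $X^2-p_2qY^2=-1$ and $X^2-p_1p_2qY^2=-1$ are insoluble and $N(\epsilon_{p_2q})=N(\epsilon_{p_1p_2q})=+1$. Because $N(\epsilon_{p_1})=-1$, the relative norm $N_{\KK_1^+/\QQ(\sqrt{p_2q})}(\epsilon_{p_1})=-1$, while the relative norms of $\epsilon_{p_2q}$ and $\epsilon_{p_1p_2q}$ to $\QQ(\sqrt{p_2q})$ are positive; hence any of the seven products of Wada's set $B$ in which $\epsilon_{p_1}$ occurs to an odd power has negative relative norm to $\QQ(\sqrt{p_2q})$ and cannot be a square in $\KK_1^+$. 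Thus $\epsilon_{p_1}$ always belongs to the F.S.U., and only $\epsilon_{p_2q}$, $\epsilon_{p_1p_2q}$ and $\epsilon_{p_2q}\epsilon_{p_1p_2q}$ remain as candidate squares.

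The core computation rests on the elementary identities $\sqrt{\epsilon_{p_1p_2q}}=\sqrt{\tfrac{x+1}{2}}+\sqrt{\tfrac{x-1}{2}}$ and $\sqrt{2\epsilon_{p_1p_2q}}=\sqrt{x+1}+\sqrt{x-1}$ (valid because $N(\epsilon_{p_1p_2q})=1$, i.e. $(x-1)(x+1)=p_1p_2qy^2$), together with their analogues $\sqrt{2\epsilon_{p_2q}}=\sqrt{a+1}+\sqrt{a-1}$. A square root of this shape lies in $\KK_1^+$ exactly when every radical appearing belongs to $\{1,\sqrt{p_1},\sqrt{p_2q},\sqrt{p_1p_2q}\}$, i.e. to the group generated by $\sqrt{p_1}$ and $\sqrt{p_2q}$. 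Writing each of $x\pm1$ (resp. $a\pm1$) as its squarefree kernel times a square and matching it against the factorisation of $p_1p_2q$ (resp. $p_2q$), one sees that the presence or absence of a factor $2$ in the kernel is precisely the parity of $x$ (resp. $a$), and that this decides whether $\sqrt{\epsilon}$ itself or only $\sqrt{2\epsilon}$ lands in the field. Here Lemma~\ref{5} is decisive: it forbids the kernels $2(x\pm1)$, $2p_1p_2q(x\pm1)$, $2(a\pm1)$, $2p_2q(a\pm1)$, exactly those that would require a spurious $\sqrt2\notin\KK_1^+$. Running through the admissible kernels then produces the dichotomy of items 1--7: $\epsilon_{p_1p_2q}$ is a square in $\KK_1^+$ iff $2p_1(x\pm1)$ is a square (then $\sqrt{\tfrac{x+1}{2}}$ and $\sqrt{\tfrac{x-1}{2}}$ lie in $\QQ(\sqrt{p_1})$ and $\QQ(\sqrt{p_2q})$); a factor--$2$--free kernel of $x\pm1$ leaves $\epsilon_{p_1p_2q}$ a non-square but makes $\epsilon_{p_2q}\epsilon_{p_1p_2q}$ a square once $a\pm1$ is also a square; and the mixed kernels $p_2(x\pm1)$, $q(x\pm1)$ pair with the $p_2(a\pm1)$-type kernels of $a\pm1$ through the cross products $\sqrt{p_2}\sqrt{q}=\sqrt{p_2q}$ and $\sqrt{p_1}\sqrt{p_2q}=\sqrt{p_1p_2q}$, again landing inside $\KK_1^+$. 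This fixes the F.S.U. of $\KK_1^+$ in each branch.

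Next I would apply Lemma~\ref{3} to $\KK_1=\KK_1^+(i)$. One checks $n_0=2$: the presence of $\xi_3=(1+i)/\sqrt2$ would force $\sqrt2\in\KK_1$, i.e. $2$ in the square class group generated by $-1,p_1,p_2q$, which fails; hence $\xi_{n_0}=i$ and, since $\mu_2=0$, the relevant test is whether $2\epsilon$ is a square in $\KK_1^+$ for some product $\epsilon$ of the real F.S.U. The same identities show that $2\epsilon_{p_2q}$ is a square in $\KK_1^+$ exactly when $a\pm1$ is a square, that $2\epsilon_{p_1p_2q}$ is a square exactly when $x\pm1$ or $p_1(x\pm1)$ is a square, and that in one further mixed branch $2\epsilon_{p_2q}\epsilon_{p_1p_2q}$ is a square, yielding the generator $\sqrt{i\epsilon_{p_2q}\epsilon_{p_1p_2q}}$ of item~7. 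Whenever such an $\epsilon$ exists, the third generator of $\KK_1$ becomes $\sqrt{i\epsilon}$ and $Q_{\KK_1}=[E_{\KK_1}:W_{\KK_1}E_{\KK_1^+}]=2$; when no admissible $2\epsilon$ is a square --- as in items~4 and~6, where $a\pm1$ is not a square --- the F.S.U. of $\KK_1$ equals that of $\KK_1^+$ and $Q_{\KK_1}=1$. Reading off $\epsilon$ in each branch 1--7 then gives the stated systems and indices.

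I expect the main obstacle to be the bookkeeping in the middle step: one must enumerate all admissible squarefree kernels of $a\pm1$ and $x\pm1$, discard those excluded by Lemma~\ref{5}, and for each surviving pair check which single unit or which product has all of its cross radicals inside $\{1,\sqrt{p_1},\sqrt{p_2q},\sqrt{p_1p_2q}\}$. The delicate points are separating the branches in which only a product (never an individual unit) becomes a square, and tracking the extra factor $i$ so that the split between $Q_{\KK_1}=1$ and $Q_{\KK_1}=2$ comes out correctly.
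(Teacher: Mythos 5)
Your proposal follows essentially the same route as the paper's proof: reduce to the three candidates $\epsilon_{p_2q}$, $\epsilon_{p_1p_2q}$, $\epsilon_{p_2q}\epsilon_{p_1p_2q}$ via $N(\epsilon_{p_1})=-1$, decompose $x\pm1$ and $a\pm1$ into squarefree kernels using unique factorization together with Lemma \ref{5}, read off which units become squares in $\KK_1^+$ from $\sqrt{2\epsilon}=\sqrt{x+1}+\sqrt{x-1}$, and then apply Lemma \ref{3} with the ``$2\epsilon$ is a square in $\KK_1^+$'' test to obtain the $\mathrm{F}.\mathrm{S}.\mathrm{U}$ of $\KK_1$ and the value of $Q_{\KK_1}$. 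Only two small imprecisions to watch in the write-up: Lemma \ref{5} rules out the kernels $2$ and $2p_1p_2q$ because they would make $\epsilon_{p_1p_2q}$ a square in $\QQ(\sqrt{p_1p_2q})$ itself (not because of a missing $\sqrt{2}$), and among the $2$-free kernels of $x\pm1$ only $1$ and $p_1$ (not $p_2$ or $q$) combine with ``$a\pm1$ a square'' to make $\epsilon_{p_2q}\epsilon_{p_1p_2q}$ a square, as your subsequent treatment of the mixed kernels in fact confirms.
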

\begin{proof}
 As $N(\epsilon_{p_1})=-1$, so  only $\epsilon_{p_2q}$, $\epsilon_{p_1p_2q}$ and $\epsilon_{p_2q}\epsilon_{p_1p_2q}$ can be squares in  $\KK_1^+$.

 1. Let $\epsilon_{p_2q}=a+b\sqrt {p_2q}$, where $a$ and $b$  are integers of different parities satisfying    $a^2-1=p_2qb^2$, hence  $(a\pm1)(a\mp1)=p_2qb^2$ and gcd of $a\pm1$,  $a\mp1$ divides  2. Thus by the decomposition uniqueness in $\ZZ$ and by Lemma \ref{5}, there exist $b_1$,  $b_2$ in $\ZZ$ such that:
 \begin{enumerate}[\rm\indent a.]
   \item If $a\pm1$ is a square in $\NN$, then
$\left\{\begin{array}{rl}
    a\pm1 &=b_1^2,\\
    a\mp1 &=b_2^2p_2q,
    \end{array}\right.$
    so $\sqrt{\epsilon_{p_2q}}=\frac{1}{\sqrt2}(b_1+b_2\sqrt{p_2q})$, and thus  $\epsilon_{p_2q}$ is not a square in $\KK_1$ but $2\epsilon_{p_2q}$ is.
   \item If $p_2(a\pm1)$ is a square in $\NN$, then
$\left\{\begin{array}{rl}
    a\pm1 &=b_1^2p_2\\
    a\mp1 &=b_2^2q,
    \end{array}\right.$
  so $\sqrt{\epsilon_{p_2q}}=\frac{1}{\sqrt2}(b_1\sqrt{p_2}+b_2\sqrt{q})$,  thus   $\epsilon_{p_2q}$ and $2\epsilon_{p_2q}$ are not squares in $\KK_1$, but $2p_2\epsilon_{p_2q}$ and $2q\epsilon_{p_2q}$ are.
   \item If $2p_2(a\pm1)$ is a square in $\NN$, then
$\left\{\begin{array}{rl}
    a\pm1 &=2b_1^2p_2\\
    a\mp1 &=2b_2^2q,
    \end{array}\right.$
  so $\sqrt{\epsilon_{p_2q}}=b_1\sqrt{p_2}+b_2\sqrt{q}$, thus   $\epsilon_{p_2q}$ is not a square in  $\KK_1$, but $p_2\epsilon_{p_2q}$ and $q\epsilon_{p_2q}$ are.
 \end{enumerate}

  2. Similarly, let $\epsilon_{p_1p_2q}=x+y\sqrt {p_1p_2q}$, where $x$ and $y$ are integers of different parities satisfying $x^2-1=p_1p_2qy^2$, hence $(x\pm1)(x\mp1)=p_1p_2qy^2$ and the gcd of $x\pm1$, $x\mp1$ divides  2. By  Lemma \ref{5}, $2(x\pm1)$ and $2p_1p_2q(x\pm1)$ are not squares in $\NN$. Thus, the decomposition uniqueness in $\ZZ$ enables us   to distinguish the following cases:
  \begin{enumerate}[\rm\indent i.]
    \item If $x\pm1$ is a square in $\NN$, then
$\left\{\begin{array}{rl}
    x\pm1 &=y_1^2\\
    x\mp1 &=y_2^2p_1p_2q,
    \end{array}\right.$\\
    so $\sqrt{2\epsilon_{p_1p_2q}}=y_1+y_2\sqrt{p_1p_2q}$, hence  $\epsilon_{p_1p_2q}$ is not a square in $\KK_1^+$, but $2\epsilon_{p_1p_2q}$ is.
    \item If $p_1(x\pm1)$ is a square in $\NN$, then
$\left\{\begin{array}{rl}
    x\pm1 &=y_1^2p_1\\
    x\mp1 &=y_2^2p_2q,
    \end{array}\right.$\\
  so $\sqrt{2\epsilon_{p_1p_2q}}=y_1\sqrt{p_1}+y_2\sqrt{p_2q}$, hence   $\epsilon_{p_1p_2q}$ is not a square in $\KK_1^+$, but  $2\epsilon_{p_1p_2q}$ is.
    \item If $2p_1(x\pm1)$ is a square in $\NN$, then
$\left\{\begin{array}{rl}
    x\pm1 &=2y_1^2p_1\\
    x\mp1 &=2y_2^2p_2q,
    \end{array}\right.$\\
 so $\sqrt{\epsilon_{p_1p_2q}}=y_1\sqrt{p_1}+y_2\sqrt{p_2q}$, hence   $\epsilon_{p_1p_2q}$ is a square in $\KK_1^+$.
    \item If $p_2(x\pm1)$ is a square in $\NN$, then  $\sqrt{2\epsilon_{p_1p_2q}}=y_1\sqrt{p_2}+y_2\sqrt{p_1q}$, hence   $\epsilon_{p_1p_2q}$, $2\epsilon_{p_1p_2q}$ are not squares in $\KK_1^+$, but  $2p_2\epsilon_{p_1p_2q}$ is.
    \item If $2p_2(x\pm1)$ is a square in $\NN$, then  $\sqrt{\epsilon_{p_1p_2q}}=y_1\sqrt{p_2}+y_2\sqrt{p_1q}$, hence   $\epsilon_{p_1p_2q}$ is not a square in $\KK_1^+$, but $p_2\epsilon_{p_1p_2q}$ is.
    \item If $q(x\pm1)$ is a square in $\NN$, then  $\sqrt{2\epsilon_{p_1p_2q}}=y_1\sqrt{q}+y_2\sqrt{p_1p_2}$, hence   $\epsilon_{p_1p_2q}$, $2\epsilon_{p_1p_2q}$ are not squares in $\KK_1^+$, but  $2q\epsilon_{p_1p_2q}$ is.
    \item If $2q(x\pm1)$ is a square in $\NN$, then $\sqrt{\epsilon_{p_1p_2q}}=y_1\sqrt{q}+y_2\sqrt{p_1p_2}$, hence   $\epsilon_{p_1p_2q}$  is not a square in  $\KK_1^+$, but $q\epsilon_{p_1p_2q}$ is.
  \end{enumerate}
 Consequently, if $a\pm1$ and  $\left(x\pm1\text{ or }p_1(x\pm1)\right)$  are  squares in $\NN$,  then $2\epsilon_{p_2q}$, $2\epsilon_{p_1p_2q}$ are squares in  $\KK_1^+$; hence  $\epsilon_{p_2q}\epsilon_{p_1p_2q}$ is a square in $\KK_1^+$. Thus  $\left\{\epsilon_{p_1},  \epsilon_{p_2q}, \sqrt{\epsilon_{p_2q}\epsilon_{p_1p_2q}}\right\}$ is a  $\mathrm{F}.\mathrm{S}.\mathrm{U}$ of $\KK_1^+$, and as $2\epsilon_{p_2q}$ is a square in $\KK_1^+$, so by Lemma \ref{3}\\ $\left\{\epsilon_{p_1},  \sqrt{i\epsilon_{p_2q}}, \sqrt{\epsilon_{p_2q}\epsilon_{p_1p_2q}}\right\}$ is a $\mathrm{F}.\mathrm{S}.\mathrm{U}$ de $\KK_1$. Thus $Q_{\KK_1}=2$.\\
 \indent The other cases are similarly treated.
\end{proof}
\subsection{\textbf{$\mathrm{F}.\mathrm{S}.\mathrm{U}$  of the field  $\KK_2$}}
As  $p_1$ and $p_2$ play symmetric roles, so we similarly get the $\mathrm{F}.\mathrm{S}.\mathrm{U}$ of
 $\KK_2=\kk(\sqrt{p_2})=\QQ(\sqrt{p_2}, \sqrt{p_1q}, i)$.
\begin{propo}\label{28}
Keep the previous notations and  put $\epsilon_{p_1q}=a+b\sqrt {p_1q}$.
\begin{enumerate}[\upshape\indent1.]
\item If $a\pm1$ and $(x\pm1$ or $p_2(x\pm1))$   are squares in $\NN$,  then $\left\{\epsilon_{p_2},  \epsilon_{p_1q}, \sqrt{\epsilon_{p_1q}\epsilon_{p_1p_2q}}\right\}$ is a  $\mathrm{F}.\mathrm{S}.\mathrm{U}$ of $\KK_2^+$ and  that of  $\KK_2$ is  $\left\{\epsilon_{p_2}, \sqrt{i\epsilon_{p_1q}}, \sqrt{\epsilon_{p_1q}\epsilon_{p_1p_2q}}\right\}$; moreover,   $Q_{\KK_2}=2$.
\item If $x\pm1$ or $p_2(x\pm1)$  is a square in  $\NN$ and $a\pm1$ is not, then $\left\{\epsilon_{p_2},  \epsilon_{p_1q}, \epsilon_{p_1p_2q}\right\}$ is a  $\mathrm{F}.\mathrm{S}.\mathrm{U}$ of $\KK_2^+$ and that of  $\KK_2$ is  $\left\{\epsilon_{p_2},  \epsilon_{p_1q}, \sqrt{i\epsilon_{p_1p_2q}}\right\}$; moreover,    $Q_{\KK_2}=2$.
\item If $a\pm1$ and $2p_2(x\pm1)$   are squares in $\NN$,  then $\left\{\epsilon_{p_2},  \epsilon_{p_1q}, \sqrt{\epsilon_{p_1p_2q}}\right\}$ is a  $\mathrm{F}.\mathrm{S}.\mathrm{U}$ of $\KK_2^+$ and that of  $\KK_2$ is  $\left\{\epsilon_{p_2}, \sqrt{i\epsilon_{p_1q}}, \sqrt{\epsilon_{p_1p_2q}}\right\}$; moreover,    $Q_{\KK_2}=2$.
\item If $2p_2(x\pm1)$  is a square in $\NN$ and  $a\pm1$ is not, then $\left\{\epsilon_{p_2},  \epsilon_{p_1q}, \sqrt{\epsilon_{p_1p_2q}}\right\}$ is a  $\mathrm{F}.\mathrm{S}.\mathrm{U}$ of both of $\KK_2^+$ and  $\KK_2$; moreover,   $Q_{\KK_2}=1$.
\item $\left\{\epsilon_{p_2},  \epsilon_{p_1q}, \epsilon_{p_1p_2q}\right\}$ is a  $\mathrm{F}.\mathrm{S}.\mathrm{U}$ of $\KK_2^+$,  $\left\{\epsilon_{p_2},  \sqrt{i\epsilon_{p_1q}}, \epsilon_{p_1p_2q}\right\}$ is a  $\mathrm{F}.\mathrm{S}.\mathrm{U}$ of $\KK_2$ and   $Q_{\KK_2}=2$ if one of the following assertions is satisfied:
\begin{enumerate}[\upshape\indent i.]
\item  $a\pm1$ and $(p_1(x\pm1)$ or $2p_1(x\pm1))$   are squares in $\NN$.
\item  $a\pm1$ and $(q(x\pm1)$ or $2q(x\pm1))$   are squares in $\NN$.
\end{enumerate}
 \item $\left\{\epsilon_{p_2},  \epsilon_{p_1q}, \sqrt{\epsilon_{p_1q}\epsilon_{p_1p_2q}}\right\}$ is a  $\mathrm{F}.\mathrm{S}.\mathrm{U}$ of both of $\KK_2^+$ and  $\KK_2$, and   $Q_{\KK_2}=1$,   if one of the following assertions is satisfied:
\begin{enumerate}[\upshape\indent i.]
\item  $p_1(a\pm1)$ and $(p_1(x\pm1)$ or $q(x\pm1))$   are squares in $\NN$.
\item  $2p_1(a\pm1)$ and $(2p_1(x\pm1)$ or $2q(x\pm1))$   are squares in $\NN$.
\end{enumerate}
 \item $\left\{\epsilon_{p_2},  \epsilon_{p_1q}, \epsilon_{p_1p_2q}\right\}$ is a  $\mathrm{F}.\mathrm{S}.\mathrm{U}$ of $\KK_2^+$,  $\left\{\epsilon_{p_2},  \epsilon_{p_1q}, \sqrt{i\epsilon_{p_1q}\epsilon_{p_1p_2q}}\right\}$ is a  $\mathrm{F}.\mathrm{S}.\mathrm{U}$  of $\KK_2$ and   $Q_{\KK_2}=2$ if one of the following assertions is satisfied:
\begin{enumerate}[\upshape\indent i.]
\item  $p_1(a\pm1)$ and $(2p_1(x\pm1)$ or $2q(x\pm1))$   are squares in $\NN$.
\item  $2p_1(a\pm1)$ and $(p_1(x\pm1)$ or $q(x\pm1))$   are squares in $\NN$.
\end{enumerate}
\end{enumerate}
\end{propo}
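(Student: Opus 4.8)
My plan is to run the argument in complete parallel with the proof of Proposition \ref{27}, interchanging the roles of $p_1$ and $p_2$ throughout; I will only indicate where the interchange enters, since the algebraic manipulations are word-for-word identical. First I would fix a F.S.U. of the real biquadratic field $\KK_2^+=\QQ(\sqrt{p_2},\sqrt{p_1q})$, whose three quadratic subfields carry the fundamental units $\epsilon_{p_2}$, $\epsilon_{p_1q}$ and $\epsilon_{p_1p_2q}$, and then climb to $\KK_2=\KK_2^+(i)$ by means of Lemma \ref{3}, reading off $Q_{\KK_2}$ along the way.

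The first step is to shorten the list of candidate squares. By the principle recalled at the start of this section, a F.S.U. of $\KK_2^+$ is obtained by testing which member of the set $\{\epsilon_{p_2},\epsilon_{p_1q},\epsilon_{p_1p_2q},\epsilon_{p_2}\epsilon_{p_1q},\epsilon_{p_2}\epsilon_{p_1p_2q},\epsilon_{p_1q}\epsilon_{p_1p_2q},\epsilon_{p_2}\epsilon_{p_1q}\epsilon_{p_1p_2q}\}$ is a square in $\KK_2^+$. Since $p_2\equiv1\pmod4$ is prime we have $N(\epsilon_{p_2})=-1$, so any product in which $\epsilon_{p_2}$ appears to an odd power has norm $-1$ and cannot be a square; hence only $\epsilon_{p_1q}$, $\epsilon_{p_1p_2q}$ and $\epsilon_{p_1q}\epsilon_{p_1p_2q}$ remain to be examined, exactly as only the three analogous units survived in Proposition \ref{27}.

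Next I would carry out the two local analyses. Writing $\epsilon_{p_1q}=a+b\sqrt{p_1q}$ with $N(\epsilon_{p_1q})=1$, the relation $(a+1)(a-1)=p_1qb^2$ together with unique factorization in $\ZZ$ and Lemma \ref{5} forces exactly one of the patterns ``$a\pm1$ a square'', ``$p_1(a\pm1)$ a square'', ``$2p_1(a\pm1)$ a square'', which respectively make $2\epsilon_{p_1q}$, or $2p_1\epsilon_{p_1q}$ and $2q\epsilon_{p_1q}$, or $p_1\epsilon_{p_1q}$ and $q\epsilon_{p_1q}$ into squares in $\KK_2$. In the same way, $\epsilon_{p_1p_2q}=x+y\sqrt{p_1p_2q}$ with $x^2-1=p_1p_2qy^2$ yields, through Lemma \ref{5} and the decomposition uniqueness, the seven subcases governed by which of $x\pm1$, $p_2(x\pm1)$, $2p_2(x\pm1)$, $p_1(x\pm1)$, $2p_1(x\pm1)$, $q(x\pm1)$, $2q(x\pm1)$ is a square, each pinning down which multiple of $\epsilon_{p_1p_2q}$ is a square in $\KK_2^+$. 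Here $p_2$ takes over the distinguished role that $p_1$ played in Proposition \ref{27}, because $\KK_2^+$ contains $\sqrt{p_2}$ rather than $\sqrt{p_1}$; this is precisely the source of the $p_1\leftrightarrow p_2$ symmetry announced before the statement.

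Finally I would assemble the two analyses. Matching a square multiple of $\epsilon_{p_1q}$ against a compatible square multiple of $\epsilon_{p_1p_2q}$ decides, in each of the seven listed configurations, whether $\epsilon_{p_1q}\epsilon_{p_1p_2q}$, or $\epsilon_{p_1p_2q}$, or none of the candidates is a square in $\KK_2^+$, which gives the stated F.S.U. of $\KK_2^+$; Lemma \ref{3} then produces the corresponding F.S.U. of $\KK_2$, and comparing $[E_{\KK_2}:W_{\KK_2}E_{\KK_2^+}]$ delivers $Q_{\KK_2}\in\{1,2\}$. The main obstacle is the bookkeeping in this last step: to conclude that $\epsilon_{p_1q}\epsilon_{p_1p_2q}$ is a square one needs the two square multiples to carry the \emph{same} auxiliary radical (for instance both a factor $2$, so that the product of $2\epsilon_{p_1q}$ and $2\epsilon_{p_1p_2q}$ is a square), and one must check that no spurious configuration survives and that the resulting unit index comes out two-valued. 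All of this is mechanical once the $p_1\leftrightarrow p_2$ dictionary from Proposition \ref{27} is in place, so I would simply transcribe the corresponding combinations and omit the repetitive verifications.
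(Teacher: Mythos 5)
Your proposal is correct and follows exactly the route the paper takes: the paper gives no separate proof of Proposition \ref{28}, stating only that it follows from Proposition \ref{27} by the symmetry $p_1\leftrightarrow p_2$, and your outline (reducing to the three candidates $\epsilon_{p_1q}$, $\epsilon_{p_1p_2q}$, $\epsilon_{p_1q}\epsilon_{p_1p_2q}$ via $N(\epsilon_{p_2})=-1$, running the two radical analyses with Lemma \ref{5}, and assembling via Lemma \ref{3}) is precisely that transposition. Nothing further is needed.
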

\subsection{\textbf{$\mathrm{F}.\mathrm{S}.\mathrm{U}$  of the field  $\KK_3$}}
Let $\KK_3=\kk(\sqrt{q})=\QQ(\sqrt{q}, \sqrt{p_1p_2}, i)$.
\begin{propo}\label{29}
Keep the previous notations and  assume  $N(\epsilon_{p_1p_2})=1$.  Then   $Q_{\KK_3}=2$ and we have:
\begin{enumerate}[\upshape\indent1.]
\item If  $2q(x\pm1)$ is a square in $\NN$,  then $\left\{\epsilon_{q},  \epsilon_{p_1p_2}, \sqrt{\epsilon_{p_1p_2q}}\right\}$ is a $\mathrm{F}.\mathrm{S}.\mathrm{U}$ of $\KK_3^+$ and that of   $\KK_3$ is $\left\{\epsilon_{p_1p_2}, \sqrt{\epsilon_{p_1p_2q}}, \sqrt{i\epsilon_{q}}\right\}$.
\item If $x\pm1$ or $q(x\pm1)$ is a square in $\NN$, then $\left\{\epsilon_{q},  \epsilon_{p_1p_2}, \sqrt{\epsilon_{q}\epsilon_{p_1p_2q}}\right\}$ is a $\mathrm{F}.\mathrm{S}.\mathrm{U}$ of $\KK_3^+$ and that of   $\KK_3$ is a $\left\{\epsilon_{p_1p_2}, \sqrt{\epsilon_{q}\epsilon_{p_1p_2q}}, \sqrt{i\epsilon_{q}}\right\}$.
\item If
  $p_1(x\pm1)$  or $p_2(x\pm1)$ is a  square in $\NN$, then  $\left\{\epsilon_{q},  \epsilon_{p_1p_2}, \sqrt{\epsilon_{q}\epsilon_{p_1p_2}\epsilon_{p_1p_2q}}\right\}$ is a $\mathrm{F}.\mathrm{S}.\mathrm{U}$  of  $\KK_3^+$ and that of $\KK_3$ is  $\left\{\epsilon_{p_1p_2}, \sqrt{\epsilon_{q}\epsilon_{p_1p_2}\epsilon_{p_1p_2q}}, \sqrt{i\epsilon_{q}} \right\}$.
\item If
  $2p_1(x\pm1)$  or
 $2p_2(x\pm1)$ is a  square $\NN$, then $\left\{\epsilon_{q},  \epsilon_{p_1p_2}, \sqrt{\epsilon_{p_1p_2}\epsilon_{p_1p_2q}}\right\}$ is a $\mathrm{F}.\mathrm{S}.\mathrm{U}$  of  $\KK_3^+$ and that of $\KK_3$ is $\left\{\epsilon_{p_1p_2}, \sqrt{\epsilon_{p_1p_2}\epsilon_{p_1p_2q}}, \sqrt{i\epsilon_{q}} \right\}$.
\end{enumerate}
\end{propo}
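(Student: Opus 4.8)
The plan is to compute a fundamental system of units of the real biquadratic field $\KK_3^+=\QQ(\sqrt q,\sqrt{p_1p_2})$ first, and then to lift it to the CM-field $\KK_3=\KK_3^+(i)$ by means of Lemma \ref{3}. As $\KK_3^+$ has the three real quadratic subfields $\QQ(\sqrt q)$, $\QQ(\sqrt{p_1p_2})$ and $\QQ(\sqrt{p_1p_2q})$, the square of any unit of $\KK_3^+$ lies in $\langle\epsilon_q,\epsilon_{p_1p_2},\epsilon_{p_1p_2q}\rangle$, so the whole problem reduces to deciding which of the seven nontrivial products of $\epsilon_q,\epsilon_{p_1p_2},\epsilon_{p_1p_2q}$ is a square in $\KK_3^+$. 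I would first record the norms: since $q\equiv-1\pmod4$ divides both $q$ and $p_1p_2q$, one has $N(\epsilon_q)=N(\epsilon_{p_1p_2q})=1$, while $N(\epsilon_{p_1p_2})=1$ is the standing hypothesis; this is exactly what lets Lemmas \ref{5}, \ref{1:048}, \ref{3:105} and the splitting analysis of Proposition \ref{27} apply to all three units.

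The core of the argument is to locate each generator in the square-class group $\KK_3^{+\times}/(\KK_3^{+\times})^2$. Here the only classes of $\QQ^\times$ that become trivial are those of $q$, $p_1p_2$ and $p_1p_2q$ (because $\sqrt q,\sqrt{p_1p_2},\sqrt{p_1p_2q}\in\KK_3^+$ whereas $\sqrt2,\sqrt{p_1}\notin\KK_3^+$), so every relevant class equals one of $[1],[2],[p_1],[2p_1]$, with $[p_2]=[p_1]$. By Lemma \ref{1:048}, $2\epsilon_q$ is a square in $\QQ(\sqrt q)\subseteq\KK_3^+$, i.e. $[\epsilon_q]=[2]$. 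The delicate point is $\epsilon_{p_1p_2}$: from $N(\epsilon_{p_1p_2})=1$ one shows, via the factorisation coming from the norm relation in $\ZZ$ together with Lemmas \ref{5} and \ref{3:105} (handling separately the integer and half-integer shapes of $\epsilon_{p_1p_2}$), that $\sqrt{\epsilon_{p_1p_2}}\in\QQ(\sqrt{p_1},\sqrt{p_2})$; multiplying by $\sqrt{p_1}$ then gives $\sqrt{p_1\epsilon_{p_1p_2}}\in\QQ(\sqrt{p_1p_2})$, so $[\epsilon_{p_1p_2}]=[p_1]=[p_2]$. Finally, transcribing the seven subcases i--vii of the proof of Proposition \ref{27} into $\KK_3^+$ --- keeping only $\sqrt q,\sqrt{p_1p_2},\sqrt{p_1p_2q}$ and using that $\sqrt{p_i}$ times the root produced there again lies in $\KK_3^+$ --- yields $[\epsilon_{p_1p_2q}]=[1],[2],[2p_1],[p_1]$ according as $2q(x\pm1)$, ($x\pm1$ or $q(x\pm1)$), ($p_1(x\pm1)$ or $p_2(x\pm1)$), ($2p_1(x\pm1)$ or $2p_2(x\pm1)$) is a square in $\NN$; these four alternatives are exhaustive and mutually exclusive by the uniqueness of decomposition in $\ZZ$.

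With this dictionary the four cases become bookkeeping: in each one exactly one of the seven products falls in the trivial class and the other six do not, which both fixes the F.S.U. of $\KK_3^+$ (replacing the distinguished product by its square root) and shows that the index $[E_{\KK_3^+}:\langle\epsilon_q,\epsilon_{p_1p_2},\epsilon_{p_1p_2q}\rangle]$ equals $2$. For instance, in case 3 one has $[\epsilon_q\epsilon_{p_1p_2}\epsilon_{p_1p_2q}]=[2\cdot p_1\cdot 2p_1]=[1]$, giving $\sqrt{\epsilon_q\epsilon_{p_1p_2}\epsilon_{p_1p_2q}}\in\KK_3^+$, and in case 4 one has $[\epsilon_{p_1p_2}\epsilon_{p_1p_2q}]=[p_1\cdot p_1]=[1]$. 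To pass to $\KK_3$ I would apply Lemma \ref{3} with $n_0=2$ (since $i\in\KK_3$ but $\sqrt2\notin\KK_3$, so $\xi_3\notin\KK_3$); then $\mu_{n_0}=0$, and the lemma asks for a unit $\epsilon$ of $\KK_3^+$ with $(2+\mu_{n_0})\epsilon=2\epsilon$ a square. Since $[\epsilon_q]=[2]$, the choice $\epsilon=\epsilon_q$ works, so $\epsilon$ exists, $Q_{\KK_3}=2$, and the generator $\epsilon_q$ is replaced by $\sqrt{\xi_{n_0}\epsilon}=\sqrt{i\epsilon_q}$ while the other two F.S.U. elements of $\KK_3^+$ carry over, producing exactly the four listed systems for $\KK_3$.

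The main obstacle is the determination of the square class $[\epsilon_{p_1p_2}]=[p_1]$: it is precisely the hypothesis $N(\epsilon_{p_1p_2})=1$ (with $p_1\equiv p_2\equiv1\pmod4$) that forces $\sqrt{\epsilon_{p_1p_2}}$ into the genus field $\QQ(\sqrt{p_1},\sqrt{p_2})$, and hence $p_1\epsilon_{p_1p_2}$ to be a square in $\QQ(\sqrt{p_1p_2})$; this is what makes cases 3 and 4 close up without any further assumption on $\epsilon_{p_1p_2}$. Care is needed because $\epsilon_{p_1p_2}$ may have half-integer coordinates when $p_1p_2\equiv5\pmod8$, so the splitting must be run for both normalisations. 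Everything else --- verifying that only one product is a square in each case and that the conditions on $x$ partition all possibilities --- is routine once the square-class computation above is in place.
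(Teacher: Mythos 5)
Your proposal is correct and follows essentially the same route as the paper's proof: restrict the candidates for squares using the norm conditions, pin down $2\epsilon_q$ via Lemma \ref{1:048} and $p_1\epsilon_{p_1p_2}$ via Lemmas \ref{5} and \ref{3:105}, run the same seven-way case analysis on $\epsilon_{p_1p_2q}$ as in Proposition \ref{27}, and lift to $\KK_3$ with Lemma \ref{3} using $\epsilon=\epsilon_q$. Your reformulation in terms of the square classes $[1],[2],[p_1],[2p_1]$ is only a tidier bookkeeping of the identical argument.
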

\begin{proof}
  As the norms of $\epsilon_{q}$,  $\epsilon_{p_1p_2}$ and $\epsilon_{p_1p_2q}$ are equal to  $1$, then a $\mathrm{F}.\mathrm{S}.\mathrm{U}$ of $\KK_3^+$ is a system consisting of three elements chosen from $B'$, where  $B'=B\cup\left\{\sqrt \mu/\mu\in B\ et\ \sqrt \mu \in\KK^+\right\}$, with  $$B=\left\{\epsilon_{q}, \epsilon_{p_1p_2}, \epsilon_{p_1p_2q}, \epsilon_q\epsilon_{p_1p_2}, \epsilon_q\epsilon_{p_1p_2q}, \epsilon_{p_1p_2}\epsilon_{p_1p_2q}, \epsilon_q\epsilon_{p_1p_2}\epsilon_{p_1p_2q}\right\}.$$
According to Lemma \ref{1:048},  $\epsilon_q$ is not a square in $\QQ(\sqrt{q})$, but $2\epsilon_q$ is.\\
 \indent Put  $\epsilon_{p_1p_2}=a+b\sqrt{p_1p_2}$, then $a^2-1=b^2p_1p_2$. Hence by Lemmas \ref{5} and \ref{3:105} we get that only $2p_1(a\pm1)$ is a square in $\NN$,  so   $\epsilon_{p_1p_2}$ is not a square in $\KK_3^+$, but $p_1\epsilon_{p_1p_2}$, $p_2\epsilon_{p_1p_2}$ are.\\
\indent Let $\epsilon_{p_1p_2q}=x+y\sqrt{p_1p_2q}$, then proceeding as in the proof of Proposition \ref{27},  we get:
\begin{enumerate}[\upshape\indent a.]
\item If $x\pm1$ is a square in $\NN$, then $\sqrt{2\epsilon_{p_1p_2q}}=y_1+y_2\sqrt{p_1p_2q}$,  so   $\epsilon_{p_1p_2q}$ is not a square in $\KK_3^+$, but $2\epsilon_{p_1p_2q}$ is.
\item If $p_1(x\pm1)$ is a square in $\NN$, then  $\sqrt{2\epsilon_{p_1p_2q}}=y_1\sqrt{p_1}+y_2\sqrt{p_2q}$,  so    $\epsilon_{p_1p_2q}$ and $2\epsilon_{p_1p_2q}$ are not squares in $\KK_3^+$, but $2p_1\epsilon_{p_1p_2q}$ and $2p_2q\epsilon_{p_1p_2q}$ are.
\item If $2p_1(x\pm1)$ is a square in $\NN$, then $\sqrt{\epsilon_{p_1p_2q}}=y_1\sqrt{p_1}+y_2\sqrt{p_2q}$,  so    $\epsilon_{p_1p_2q}$ is not a square in $\KK_3^+$, but $p_1\epsilon_{p_1p_2q}$ and $p_2q\epsilon_{p_1p_2q}$ are.
\item If $p_2(x\pm1)$ is a square in $\NN$,  then $\sqrt{2\epsilon_{p_1p_2q}}=y_1\sqrt{p_2}+y_2\sqrt{p_1q}$,  so    $\epsilon_{p_1p_2q}$ and $2\epsilon_{p_1p_2q}$ are not squares in $\KK_3^+$, but $2p_2\epsilon_{p_1p_2q}$ and $2p_1q\epsilon_{p_1p_2q}$ are.
\item If $2p_2(x\pm1)$ is a square in $\NN$,  then $\sqrt{\epsilon_{p_1p_2q}}=y_1\sqrt{p_2}+y_2\sqrt{p_1q}$,  so    $\epsilon_{p_1p_2q}$ is not a square in $\KK_3^+$, but $p_2\epsilon_{p_1p_2q}$, $p_1q\epsilon_{p_1p_2q}$ are.
\item If $q(x\pm1)$ is a square in $\NN$, then $\sqrt{2\epsilon_{p_1p_2q}}=y_1\sqrt{q}+y_2\sqrt{p_1p_2}$,  so    $\epsilon_{p_1p_2q}$ is not a square in $\KK_3^+$, but  $2\epsilon_{p_1p_2q}$ is.
\item If $2q(x\pm1)$ is a square in $\NN$, then $\sqrt{\epsilon_{p_1p_2q}}=y_1\sqrt{q}+y_2\sqrt{p_1p_2}$,  so    $\epsilon_{p_1p_2q}$  is a square in $\KK_3^+$.
 \end{enumerate}
  Consequently, we have:
 \begin{enumerate}[\upshape\indent 1.]
\item If $x\pm1$ or $q(x\pm1)$ is a square in $\NN$, then $\epsilon_q\epsilon_{p_1p_2q}$ is a square in $\KK_3^+$,  so  \{$\epsilon_q$, $\epsilon_{p_1p_2}$, $\sqrt{\epsilon_q\epsilon_{p_1p_2q}}$\} is a $\mathrm{F}.\mathrm{S}.\mathrm{U}$ of $\KK_3^+$. As $2\epsilon_q$ is a square in $\KK_3^+$,  then by Lemma \ref{3},  \{$\epsilon_{p_1p_2}$, $\sqrt{\epsilon_q\epsilon_{p_1p_2q}}$, $\sqrt{i\epsilon_q}$\} is a $\mathrm{F}.\mathrm{S}.\mathrm{U}$ of $\KK_3$.
\item If  $2q(x\pm1)$ is a square in $\NN$, then $\epsilon_{p_1p_2q}$ is a square in $\KK_3^+$, hence $\left\{\epsilon_{q},  \epsilon_{p_1p_2}, \sqrt{\epsilon_{p_1p_2q}}\right\}$ is a $\mathrm{F}.\mathrm{S}.\mathrm{U}$  of  $\KK_3^+$, and by Lemma \ref{3}  $\left\{\epsilon_{p_1p_2}, \sqrt{\epsilon_{p_1p_2q}}, \sqrt{i\epsilon_{q}}\right\}$ is a $\mathrm{F}.\mathrm{S}.\mathrm{U}$ of $\KK_3$.
\item If   $p_1(x\pm1)$  is a square in $\NN$, then  $2p_1\epsilon_{p_1p_2q}$ is a square in $\KK_3^+$, and since  $p_1\epsilon_{p_1p_2}$ and  $2\epsilon_{q}$ are too,  so  $\epsilon_{q}\epsilon_{p_1p_2}\epsilon_{p_1p_2q}$ is a square in $\KK_3^+$, hence  $\left\{\epsilon_{q},  \epsilon_{p_1p_2}, \sqrt{\epsilon_{q}\epsilon_{p_1p_2}\epsilon_{p_1p_2q}}\right\}$ is a $\mathrm{F}.\mathrm{S}.\mathrm{U}$  of  $\KK_3^+$. As $2\epsilon_q$ is a square in $\KK_3^+$,  then by Lemma \ref{3}\\ $\left\{\epsilon_{p_1p_2}, \sqrt{\epsilon_{q}\epsilon_{p_1p_2}\epsilon_{p_1p_2q}}, \sqrt{i\epsilon_{q}} \right\}$ is a $\mathrm{F}.\mathrm{S}.\mathrm{U}$  of  $\KK_3$.
\item If   $2p_2(x\pm1)$  is a square in $\NN$, then   $p_2\epsilon_{p_1p_2q}$ is a square in $\KK_3^+$, and  since $p_2\epsilon_{p_1p_2}$ is too, so $\epsilon_{p_1p_2}\epsilon_{p_1p_2q}$ is a square in $\KK_3^+$. This yields that $\left\{\epsilon_{q},  \epsilon_{p_1p_2}, \sqrt{\epsilon_{p_1p_2}\epsilon_{p_1p_2q}}\right\}$ is a $\mathrm{F}.\mathrm{S}.\mathrm{U}$  of  $\KK_3^+$. On the other hand,   $2\epsilon_q$ is a square in $\KK_3^+$,  then by Lemma \ref{3}, $\left\{\epsilon_{p_1p_2}, \sqrt{\epsilon_{p_1p_2}\epsilon_{p_1p_2q}}, \sqrt{i\epsilon_{q}} \right\}$ is a $\mathrm{F}.\mathrm{S}.\mathrm{U}$  of  $\KK_3$.
\end{enumerate}
 The other cases are similarly proved.
\end{proof}
\begin{propo}\label{36}
Keep the previous notations and  assume  $N(\epsilon_{p_1p_2})=-1$.  Then   $Q_{\KK_3}=2$ and we have:
\begin{enumerate}[\upshape\indent1.]
\item If  $2q(x\pm1)$ is a square in $\NN$,  then $\left\{\epsilon_{q},  \epsilon_{p_1p_2}, \sqrt{\epsilon_{p_1p_2q}}\right\}$ is a $\mathrm{F}.\mathrm{S}.\mathrm{U}$ of $\KK_3^+$ and that of   $\KK_3$ is $\left\{\epsilon_{p_1p_2}, \sqrt{\epsilon_{p_1p_2q}}, \sqrt{i\epsilon_{q}}\right\}$.
\item If $x\pm1$ or $q(x\pm1)$ is a square in $\NN$, then $\left\{\epsilon_{q},  \epsilon_{p_1p_2}, \sqrt{\epsilon_{q}\epsilon_{p_1p_2q}}\right\}$ is a $\mathrm{F}.\mathrm{S}.\mathrm{U}$  of $\KK_3^+$ and that of   $\KK_3$ is $\left\{\epsilon_{p_1p_2}, \sqrt{\epsilon_{q}\epsilon_{p_1p_2q}}, \sqrt{i\epsilon_{q}}\right\}$.
\item In the other cases $\left\{\epsilon_{q},  \epsilon_{p_1p_2}, \epsilon_{p_1p_2q}\right\}$ is a $\mathrm{F}.\mathrm{S}.\mathrm{U}$ of $\KK_3^+$ and that of   $\KK_3$ is $\left\{\epsilon_{p_1p_2}, \epsilon_{p_1p_2q}, \sqrt{i\epsilon_{q}}\right\}$.
\end{enumerate}
\end{propo}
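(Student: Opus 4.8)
The plan is to follow the template of Proposition~\ref{29} almost verbatim, since the fields $\KK_3=\QQ(\sqrt q,\sqrt{p_1p_2},i)$ and $\KK_3^+=\QQ(\sqrt q,\sqrt{p_1p_2})$ are unchanged and only the sign of $N(\epsilon_{p_1p_2})$ differs. First I would record the three relevant norms: as $q\equiv3\pmod4$ and $p_1p_2q\equiv3\pmod4$ each carry a prime factor $\equiv3\pmod4$ to an odd power, neither is a sum of two squares, so $N(\epsilon_q)=N(\epsilon_{p_1p_2q})=1$, while $N(\epsilon_{p_1p_2})=-1$ by hypothesis. A $\mathrm{F}.\mathrm{S}.\mathrm{U}$ of the real biquadratic field $\KK_3^+$ is then found by deciding which of the seven products in $B=\{\epsilon_q,\epsilon_{p_1p_2},\epsilon_{p_1p_2q},\epsilon_q\epsilon_{p_1p_2},\epsilon_q\epsilon_{p_1p_2q},\epsilon_{p_1p_2}\epsilon_{p_1p_2q},\epsilon_q\epsilon_{p_1p_2}\epsilon_{p_1p_2q}\}$ is a square in $\KK_3^+$, and then lifting to $\KK_3=\KK_3^+(i)$ via Lemma~\ref{3}.

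The single new ingredient, which I expect to be the crux, is that the negative-norm unit $\epsilon_{p_1p_2}$ can never be absorbed into a square. Let $\tau$ be the automorphism of $\KK_3^+$ fixing $\sqrt q$ and negating $\sqrt{p_1p_2}$; then $\tau(\epsilon_q)=\epsilon_q$, $\tau(\epsilon_{p_1p_2q})=\epsilon_{p_1p_2q}^{-1}$ and, since $N(\epsilon_{p_1p_2})=-1$, $\tau(\epsilon_{p_1p_2})=-\epsilon_{p_1p_2}^{-1}$. If a product $\mu=\epsilon_q^{a}\epsilon_{p_1p_2}^{b}\epsilon_{p_1p_2q}^{c}$ with $b$ odd were equal to $\pm\gamma^2$ for some $\gamma\in\KK_3^+$, then $\mu\,\tau(\mu)=-\epsilon_q^{2a}=(\gamma\tau(\gamma))^2$, forcing $-1$ to be a square in the totally real field $\KK_3^+$, which is absurd. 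Hence every square occurring among the elements of $B$ contains $\epsilon_{p_1p_2}$ to an even power, so $\epsilon_{p_1p_2}$ persists in every $\mathrm{F}.\mathrm{S}.\mathrm{U}$ and only $\epsilon_q$ and $\epsilon_{p_1p_2q}$ can combine. This is exactly why, unlike Proposition~\ref{29}, the $p_1$- and $p_2$-type subcases produce no square and merge into one case.

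With $\epsilon_{p_1p_2}$ removed from play, the rest is the $(\epsilon_q,\epsilon_{p_1p_2q})$ part of the proof of Proposition~\ref{29}. By Lemma~\ref{1:048}, $\epsilon_q$ is not a square but $2\epsilon_q$ is a square in $\QQ(\sqrt q)\subset\KK_3^+$. Writing $\epsilon_{p_1p_2q}=x+y\sqrt{p_1p_2q}$ and rerunning the seven-fold case distinction (a)--(g) of that proof, I would read off which rational multiple of $\epsilon_{p_1p_2q}$ is a square in $\KK_3^+$: if $2q(x\pm1)$ is a square then $\epsilon_{p_1p_2q}$ itself is one, giving the system $\{\epsilon_q,\epsilon_{p_1p_2},\sqrt{\epsilon_{p_1p_2q}}\}$; if $x\pm1$ or $q(x\pm1)$ is a square then $2\epsilon_{p_1p_2q}$ is a square, so $\epsilon_q\epsilon_{p_1p_2q}=(2\epsilon_q)(2\epsilon_{p_1p_2q})/4$ is a square, giving $\{\epsilon_q,\epsilon_{p_1p_2},\sqrt{\epsilon_q\epsilon_{p_1p_2q}}\}$; and in all remaining cases only $p_1$, $p_2$, $2p_1$ or $2p_2$ times $(x\pm1)$ is a square, so the surplus factor $p_1$ or $p_2$ cannot be cancelled without $\epsilon_{p_1p_2}$, whence no product in $B$ is a square and $\{\epsilon_q,\epsilon_{p_1p_2},\epsilon_{p_1p_2q}\}$ is a $\mathrm{F}.\mathrm{S}.\mathrm{U}$ of $\KK_3^+$.

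It remains to pass from $\KK_3^+$ to $\KK_3$ by Lemma~\ref{3}. Since $i=\xi_2\in\KK_3$ but $\sqrt2\notin\KK_3$ --- none of the quadratic subfields of $\KK_3$ being $\QQ(\sqrt{\pm2})$ --- one has $n_0=2$, $\mu_{n_0}=0$ and $2+\mu_{n_0}=2$. Because $2\epsilon_q$ is always a square in $\KK_3^+$, the unit $\epsilon=\epsilon_q$ meets the hypothesis of the second alternative of Lemma~\ref{3}, so $\sqrt{i\epsilon_q}$ is a unit of $\KK_3$ that replaces $\epsilon_q$ in each of the three systems; this produces the asserted $\mathrm{F}.\mathrm{S}.\mathrm{U}$'s of $\KK_3$ and, the lift always being of the second type, yields $Q_{\KK_3}=2$ in every case.
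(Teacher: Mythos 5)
Your proposal is correct and follows essentially the same route as the paper, which simply observes that since $N(\epsilon_{p_1p_2})=-1$ only $\epsilon_{p_1p_2q}$ and $\epsilon_q\epsilon_{p_1p_2q}$ can be squares in $\KK_3^+$ and then says ``proceeding as above'' (i.e.\ as in Proposition \ref{29}). Your explicit $\tau$-argument showing that a unit of norm $-1$ cannot occur to an odd power in a square is just a written-out justification of the step the paper asserts, and the remaining case analysis and the lift via Lemma \ref{3} coincide with the paper's.
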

\begin{proof}
As $N(\epsilon_{p_1p_2}1)=-$, so  by  Lemma \ref{1:048}, only $\epsilon_{p_1p_2q}$ and $\epsilon_{q}\epsilon_{p_1p_2q}$ can be squares in  $\KK_3^+$. Proceeding as above, we get the results.
\end{proof}
\section{\textbf{The  ambiguous classes of $\kk/\QQ(i)$}}
 Let $F=\QQ(i)$ and $\kk=\QQ(\sqrt{p_1p_2q}, i)$. We denote by  $\mathrm{A}m(\kk/F)$  the group of the ambiguous classes of $\kk/F$ and by $\mathrm{A}m_s(\kk/F)$ the subgroup of $\mathrm{A}m(\kk/F)$ generated by  the strongly ambiguous classes.
As  $p_1\equiv p_2\equiv1\pmod4$, so there exist $e$, $f$, $g$ and $h$ in $\NN$ such that  $p_1=e^2+4f^2=\pi_1\pi_2$ and $p_2=g^2+4h^2=\pi_3\pi_4$. Put  $\pi_1=e+2if$,  $\pi_2=e-2if$, $\pi_3=g+2ih$ and  $\pi_4=g-2ih$. Let  $\mathcal{H}_j$ (resp. $\mathcal{Q}$)  be the prime ideal of  $\kk$ above $\pi_j$ (resp. $q$), where $j\in\{1, 2, 3, 4\}$. It is easy to see  that $\mathcal{H}_j^2=(\pi_j)$ and $\mathcal{Q}^2=(q)$. Therefore $[\mathcal{Q}]$ and  $[\mathcal{H}_j]$ are in $\mathrm{A}m_s(\kk/F)$, for all  $j\in\{1, 2, 3, 4\}$.  Keep the notation $\epsilon_{p_1p_2q}=x+y\sqrt{p_1p_2q}$. In this section,  we will determine  generators of $\mathrm{A}m_s(\kk/F)$ and $\mathrm{A}m(\kk/F)$.  Let us first prove the following result.
\begin{lem}\label{7}
Consider the prime ideals $\mathcal{H}_j$ of $\kk$, $1\leq j\leq4$.
\begin{enumerate}[\rm\indent1.]
  \item If $x\pm1$ is  a square in $\NN$, then  $\left|\langle[\h], [\hh], [\hhh], [\hhhh]\rangle\right|=16$.
     \item If $q(x\pm1)$ or $2q(x\pm1)$ is  a square in $\NN$, then $\left|\langle[\h], [\hh],  [\hhh]\rangle\right|=$ \\$\left|\langle[\h], [\hhh],  [\hhhh]\rangle\right|=8$.
   \item If $p_1(x\pm1)$ or $2p_1(x\pm1)$ is  a square in $\NN$, then $\left|\langle[\h], [\hhh],  [\hhhh]\rangle\right|=8$.
    \item If $p_2(x\pm1)$ or $2p_2(x\pm1)$ is  a square in $\NN$, then  $\left|\langle[\h], [\hh],  [\hhh]\rangle\right|=8$.
\end{enumerate}
\end{lem}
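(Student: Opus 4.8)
The plan is to translate each relation among the classes $[\h],[\hh],[\hhh],[\hhhh]$ into a statement about which products of the $\pi_j$ are units times squares in $\kk$, to discard almost all of these products by a relative norm computation, and finally to decide the surviving ones by re-using the factorisations already carried out for the fundamental systems of units.

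First I would note that $\h^2=(\pi_1),\dots,\hhhh^2=(\pi_4)$ and $\Q^2=(q)$ are principal, so all the classes involved have order dividing $2$ and every group in the statement is an elementary abelian $2$-group; its order is $2^{\#\{\text{generators}\}}$ divided by the number of independent relations. The basic dictionary is: since the group of fractional ideals of $\kk$ is free, $\big(\prod_{j\in S}\mathcal{H}_j\big)^2=\big(\prod_{j\in S}\pi_j\big)$ forces, for every $S\subseteq\{1,2,3,4\}$, that $\prod_{j\in S}\mathcal{H}_j$ is principal if and only if $\prod_{j\in S}\pi_j$ is a unit times a square in $\kk$. I would also record, working in $\kk^\ast/(\kk^\ast)^2$, the two ambient identities $\mathcal H_1\mathcal H_2\mathcal H_3\mathcal H_4\,\Q=(\sqrt{p_1p_2q})$ (so $[\Q]=[\h][\hh][\hhh][\hhhh]$ and $p_1p_2\equiv q$) and $2=-i(1+i)^2$ (so $2\equiv i$).

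Next I would eliminate all but three products by descending to $L:=\QQ(\sqrt{p_1p_2q})$. Let $\sigma$ be complex conjugation, the generator of $\mathrm{Gal}(\kk/L)$; then $\sigma(\pi_1)=\pi_2$ and $\sigma(\pi_3)=\pi_4$. Applying $N_{\kk/L}$ to a putative identity $\prod_{j\in S}\pi_j=u\gamma^2$ shows that $N_{\kk/L}\big(\prod_{j\in S}\pi_j\big)=\prod_{j\in S}\pi_j\sigma(\pi_j)$ must be $N_{\kk/L}(u)$ times a square of $L$, and this relative norm is a perfect square in $L$ exactly when $S$ meets each of $\{1,2\}$ and $\{3,4\}$ in an even number of indices, i.e. $S\in\{\varnothing,\{1,2\},\{3,4\},\{1,2,3,4\}\}$, corresponding to $1,p_1,p_2,p_1p_2$. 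By Lemma \ref{6} the fundamental unit of $\kk$ is either $\epsilon_{p_1p_2q}$ or $\sqrt{i\epsilon_{p_1p_2q}}$; in the first case (which is precisely the situation under the hypotheses of parts $2$, $3$ and $4$, where $x\pm1$ is not a square) $N_{\kk/L}(u)$ is an even power of $\epsilon_{p_1p_2q}$, hence a square in $L$, so the norm condition already rules out every singleton and every ``mixed'' product and leaves only
\[
[\h][\hh]=1,\qquad [\hhh][\hhhh]=1,\qquad [\h][\hh][\hhh][\hhhh]=1
\]
as possible relations, governed respectively by $p_1$, $p_2$ or $p_1p_2\,(\equiv q)$ being a unit times a square.

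Then I would decide these three conditions. Re-using the factorisations of $(x-1)(x+1)=p_1p_2q\,y^2$ from the proofs of Propositions \ref{27} and \ref{29}, one gets: $p_1$ is a unit times a square in $\kk$ iff $p_1\epsilon_{p_1p_2q}$ or $2p_1\epsilon_{p_1p_2q}$ is a square in $L$, i.e. iff $2p_1(x\pm1)$ or $p_1(x\pm1)$ is a square in $\NN$ (the second using $2\equiv i$); symmetrically for $p_2$; and $q$ (hence $p_1p_2$) is a unit times a square iff $2q(x\pm1)$ or $q(x\pm1)$ is a square. Since the squarefree parts of $x-1$ and $x+1$ are complementary subsets of $\{p_1,p_2,q\}$, the four hypotheses are mutually exclusive. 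Hence: under $(1)$ we are in the case $\eta=\sqrt{i\epsilon_{p_1p_2q}}$ and none of the three conditions holds, so $[\h],[\hh],[\hhh],[\hhhh]$ are independent and $\langle[\h],[\hh],[\hhh],[\hhhh]\rangle$ has order $16$; under $(2)$ only $[\h][\hh][\hhh][\hhhh]=1$ holds, leaving each of $[\h],[\hh],[\hhh]$ and $[\h],[\hhh],[\hhhh]$ independent (order $8$); under $(3)$ only $[\h][\hh]=1$ holds, leaving $[\h],[\hhh],[\hhhh]$ independent (order $8$); under $(4)$ only $[\hhh][\hhhh]=1$ holds, leaving $[\h],[\hh],[\hhh]$ independent (order $8$).

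The step I expect to be the main obstacle is the converse (negative) direction that underlies every lower bound: that outside its defining case a product such as $p_1$, $p_2$ or $q$ is genuinely \emph{not} a unit times a square. Proving this means taking an equality $p_1 u=\gamma^2$ in $\kk$, splitting according to the parity of the exponent of $i$ in $u$ and the two shapes of the fundamental unit from Lemma \ref{6}, using $2\equiv i\pmod{(\kk^\ast)^2}$ together with the fact that $2\notin(L^\ast)^2$ and the total positivity of $\epsilon_{p_1p_2q}$ (from $N(\epsilon_{p_1p_2q})=1$), and finally reaching a contradiction via Lemmas \ref{5} and \ref{3:105} and the disjointness of the factorisation cases; in part $(1)$ the same analysis, now with $\eta=\sqrt{i\epsilon_{p_1p_2q}}$ so that $N_{\kk/L}(u)$ may equal $\pm\epsilon_{p_1p_2q}$, is what is needed to discard even $p_1,p_2,p_1p_2$. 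The norm computation and the identification of the three governing conditions are routine once this bookkeeping is in place.
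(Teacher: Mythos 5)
Your argument is correct and is in substance the same as the paper's: the relative-norm test down to $\QQ(\sqrt{p_1p_2q})$ that you use to kill the singletons and the mixed products is exactly Proposition 1 of \cite{AZT12-2} (there phrased as $\sqrt{m^2+n^2}\notin\QQ(\sqrt{p_1p_2q})$ for an ideal with square $(m+in)$), and your unit-times-square analysis of $p_1$, $p_2$ and $p_1p_2\equiv q$ via the factorisations of $x^2-1$ is exactly Proposition 2 and Remark 1 of \cite{AZT12-2}. The paper cites these criteria as black boxes and checks the fifteen nonempty products one by one, whereas you re-derive them from scratch; the case bookkeeping and the resulting orders agree.
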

\begin{proof}
Since $\mathcal{H}_j^2=(\pi_j)$, for all $1\leq j\leq2$, and since also  $\sqrt{e^2+(2f)^2}=\sqrt{p_1}\not\in\QQ(\sqrt{p_1p_2q})$ and $\sqrt{g^2+(2h)^2}=\sqrt{p_2}\not\in\QQ(\sqrt{p_1p_2q})$, so,  according to \cite[Proposition 1]{AZT12-2},  $\mathcal{H}_j$ are not  principal  in $\kk$.\par
1. If $x\pm1$ is  a square in $\NN$, then ${\epsilon_{p_1p_1q}}$ is not a $\mathrm{F}.\mathrm{S}.\mathrm{U}$ of $\kk$ (by Lemma \ref{6}) and for all prime $\ell$ dividing $p_1p_2q$,  $\ell(x+1)$,  $\ell(x-1)$,  $2\ell(x+1)$,  $2\ell(x-1)$  are not squares in $\NN$. We have:

  $(\h\hh)^2=(p_1)$,  $(\hhh\hhhh)^2=(p_2)$ and $\mathcal{Q}^2=(q)$, hence according to \cite[Proposition 2]{AZT12-2},  $\h\hh$, $\hhh\hhhh$ and $\mathcal{Q}$ are not principal in $\kk$.

For $i\in\{1, 2\}$ and $j\in\{3, 4\}$,   $\mathcal{H}_i\mathcal{H}_j$ is not  principal in $\kk$, in fact,   $(\mathcal{H}_i\mathcal{H}_j)^2=(\pi_i\pi_j)$ and as
 $\pi_1\pi_3=(eg-4fh)+2i(eh+fg)$,
   $\pi_1\pi_4=(eg+4fh)-2i(eh-fg)$,
   $\pi_2\pi_3=(eg+4fh)+2i(eh-fg)$,
   $\pi_2\pi_4=(eg-4fh)-2i(eh+fg)$,
and also
 $\sqrt{(eg-4fh)^2+4(eh+fg)^2}=\sqrt{(eg+4fh)^2+4(eh-fg)^2}=\sqrt{p_1p_2}\not\in\qq$, hence \cite[Proposition 1]{AZT12-2} yields the result.

For $i\in\{1, 2\}$ (resp. $j\in\{3, 4\}$), the ideal   $\mathcal{H}_i\mathcal{H}_3\mathcal{H}_4$ (resp. $\mathcal{H}_1\mathcal{H}_2\mathcal{H}_j$) is not  principal, since  $(\mathcal{H}_i\mathcal{H}_3\mathcal{H}_4)^2=(\pi_ip_2)$ (resp. $(\mathcal{H}_1\mathcal{H}_2\mathcal{H}_j)^2=(p_1\pi_j)$), and as $\sqrt{((p_2e)^2+4(p_2f)^2)}=p_2\sqrt{p_1}\not\in\qq$ (resp. $\sqrt{((p_1g)^2+4(p_1h)^2)}=p_1\sqrt{p_2}\not\in\qq$), then \cite[Proposition 1]{AZT12-2} states the result.

Finely, as $(\h\hh\hhh\hhhh)^2=(p_1p_2)$, so \cite[Remark 1]{AZT12-2} implies that $\h\hh\hhh\hhhh$ is not principal in $\kk$. Note at the end that $[\h\hh\hhh\hhhh]=[\mathcal{Q}]$, since $[\h\hh\hhh\hhhh\mathcal{Q}]=[(\sqrt{p_1p_2q})]=1$.

2. If $q(x\pm1)$ or $2q(x\pm1)$ are not squares  in $\NN$, then  according to \cite[Proposition 2]{AZT12-2} $[\h\hh\hhh\hhhh]=[\mathcal{Q}]$ is  principal in $\kk$. Hence the result.

The other assertions are similarly proved.
\end{proof}

 Determine now  generators of $ \mathrm{A}m_s(\kk/F)$ and $\mathrm{A}m(\kk/F)$. According to the ambiguous class number formula (see \cite{Ch-33}),
the genus number, $[(\kk/F)^*:\kk]$, is given by:
 \begin{equation}\label{51}
|\mathrm{A}m(\kk/F)|=[(\kk/F)^*:\kk]=\frac{h(F)2^{t-1}}{[E_F: E_F\cap N_{\kk/F}(\kk^\times)]},
\end{equation}
where $h(F)$ is the class number of $F$ and $t$ is the number of finite and infinite primes of $F$ ramified in $\kk/F$. Moreover as the class number
of  $F$ is equal to $1$, so the formula \eqref{51} yields that
 \begin{equation}\label{56}|\mathrm{A}m(\kk/F)|=[(\kk/F)^*:\kk]=2^r,\end{equation}
 where $r=\text{rank}\mathbf{C}l_2(\mathds{k})=t-e-1$ and $2^e=[E_F: E_F\cap N_{\kk/F}(\kk^\times)]$ (see for example \cite{McPaRa-95}).
  The relation between  $|\mathrm{A}m(\kk/F)|$ and $|\mathrm{A}m_s(\kk/F)|$ is given by the following formula (see for example \cite{Lem-13}):
\begin{equation}\label{50}
\frac{|\mathrm{A}m(\kk/F)|}{|\mathrm{A}m_s(\kk/F)|}=[E_F\cap N_{\kk/F}(\kk^\times):N_{\kk/F}(E_\kk)].
\end{equation}
 To continue, we need  the following lemma.
\begin{lem}[\cite{McPaRa-95}]\label{57}
Let $p_1\equiv p_2\equiv-q\equiv1\pmod4$ be different primes,  $F=\QQ(i)$ and $\kk=\QQ(\sqrt{p_1p_2q}, i)$.
\begin{enumerate}[\rm\indent1.]
  \item If $p_1\equiv p_2\equiv1 \pmod8$, then $i$ is  a norm in $\kk/F$.
  \item If $p_1\equiv5$ or $p_2\equiv 5\pmod8$, then $i$ is not a norm in $\kk/F$.
\end{enumerate}
\end{lem}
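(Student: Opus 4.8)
The plan is to invoke the Hasse Norm Theorem. Since $\kk=F(\sqrt{p_1p_2q})$ with $F=\QQ(i)$ is a cyclic (quadratic) extension, an element of $F$ — here $i$ — is a global norm from $\kk$ if and only if it is a local norm at every place of $F$, equivalently if and only if the quadratic Hilbert symbol $\left(i,\,p_1p_2q\right)_v$ equals $1$ at every place $v$ of $F$. So I would reduce the statement to a finite set of local symbol computations and read off the congruence conditions on $p_1,p_2$. First I would record the splitting in $F$: since $p_1\equiv p_2\equiv1\pmod4$, both split as $p_1=\pi_1\pi_2$ and $p_2=\pi_3\pi_4$ (the notation already fixed in the previous section); since $q\equiv3\pmod4$, the prime $q$ stays inert; the prime $2$ ramifies as $(1+i)^2$ up to a unit; and the unique archimedean place is complex, so $F_\infty=\CC$ and the symbol there is trivial. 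Thus $\left(i,p_1p_2q\right)_v$ can be non-trivial only at the primes above $p_1$, $p_2$, $q$ and at the dyadic prime $(1+i)$.

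Next I would carry out the tame computations at the odd primes. At a prime $\pi\mid p_1$ the element $i$ is a unit while $p_1p_2q$ has valuation $1$, so the tame symbol reduces to the quadratic residue symbol $\left(\frac{i}{p_1}\right)$ of the reduction of $i$ in the residue field $\FF_{p_1}$. By the classical fact that the order-$4$ element $i$ is a square in $\FF_{p_1}^{\times}$ exactly when $8\mid p_1-1$, this symbol is $+1$ iff $p_1\equiv1\pmod8$ and $-1$ iff $p_1\equiv5\pmod8$. The two primes $\pi_1,\pi_2$ above $p_1$ give the same value, because the two square roots of $-1$ in $\FF_{p_1}$ differ by the factor $-1$, which is itself a square mod $p_1$ (as $p_1\equiv1\pmod4$). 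The identical analysis at the primes above $p_2$ yields $\left(\frac{i}{p_2}\right)$, which is trivial iff $p_2\equiv1\pmod8$. At the inert prime $q$ the residue field is $\FF_{q^2}$, and since $q\equiv3\pmod4$ forces $8\mid q^2-1$, the element $i$ is always a square there; hence that symbol is $+1$ and $q$ never obstructs — which explains why no condition on $q\bmod 8$ appears in the statement.

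Finally I would assemble the pieces, being careful that the Hasse criterion demands that \emph{each} local symbol vanish, not merely that their product does. In case~1, $p_1\equiv p_2\equiv1\pmod8$, every odd and archimedean symbol equals $+1$; the Hilbert reciprocity formula $\prod_v\left(i,p_1p_2q\right)_v=1$ then forces the remaining dyadic symbol at $(1+i)$ to be $+1$ as well, so all local symbols are trivial and $i$ is a norm. In case~2, $p_1\equiv5$ or $p_2\equiv5\pmod8$, one of the symbols above $p_1$ or $p_2$ already equals $-1$, so $i$ fails to be a local norm at that prime and hence, by Hasse, is not a global norm — irrespective of the dyadic symbol.

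The main obstacle is the wild symbol at the dyadic prime $(1+i)$ in $\QQ_2(i)$, whose direct evaluation is delicate. The whole point of the argument is to sidestep that computation: the product formula pins the dyadic symbol down precisely in case~1, the only situation where its value is actually needed, while in case~2 the obstruction is already supplied by the odd primes above $p_1$ and $p_2$.
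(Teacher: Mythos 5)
Your argument is correct, but note that the paper itself does not prove this lemma at all: it is imported verbatim from McCall--Parry--Ranalli \cite{McPaRa-95}, so there is no internal proof to compare against. What you have written is a legitimate self-contained derivation. The reduction via the Hasse norm theorem to local conditions $(i,p_1p_2q)_v=1$ is the right framework for a quadratic (hence cyclic) extension of $\QQ(i)$; your inventory of places is accurate ($p_1,p_2$ split, $q$ inert, $2$ ramified in $(1+i)$, one complex archimedean place); and the tame computations are sound, including the two points that are easy to overlook: the two primes above $p_1$ give the same symbol because $-1$ is a square mod $p_1\equiv1\pmod4$, and the inert prime $q$ never obstructs because $8\mid q^2-1$ makes $i$ a square in $\FF_{q^2}^\times$. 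Your use of Hilbert reciprocity to force the dyadic symbol to be $+1$ in case~1, while noting it is irrelevant in case~2, is the standard and cleanest way to avoid the wild computation at $(1+i)$. The only thing your write-up gains over the paper is transparency; what the citation buys the authors is brevity, since \cite{McPaRa-95} establishes the same congruence criteria (there phrased in terms of rank computations for $\mathbf{C}l_2(\kk)$) by essentially these norm-residue considerations.
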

\begin{propo}\label{248}
Let $(\kk/F)^*$ denote the relative genus field of $\kk/F$.
\begin{enumerate}[\rm\indent1.]
\item $\k\varsubsetneq (\kk/F)^*$ and $[(\kk/F)^*:\k]=2\text{ or } 4$.
\item Assume  $p_1\equiv p_2\equiv1 \pmod8$.
\begin{enumerate}[\rm i.]
  \item If $x\pm1$ is a square in  $\NN$, then\\ $\mathrm{Am}(\kk/\QQ(i))=\mathrm{Am}_s(\kk/\QQ(i))=\langle[\h], [\hh], [\hhh], [\hhhh]\rangle.$
  \item Else, there  exist an unambiguous ideal $\mathcal{I}$ in $\kk/\QQ(i)$ of order  $2$ such that
    $\mathrm{Am}_s(\kk/\QQ(i))=\langle[\h], [\hh], [\hhh]\rangle\text{ or }\langle[\h], [\hhh], [\hhhh]\rangle$ and \\ $\mathrm{Am}(\kk/\QQ(i))=\langle[\h], [\hh], [\hhh], [\mathcal{I}]\rangle\text{ or }\langle[\h], [\hhh], [\hhhh], [\mathcal{I}]\rangle.$
\end{enumerate}
\item Assume  $p_1\equiv5$ or $p_2\equiv5\pmod8$, then neither $x + 1$ nor $x - 1$ is a square in $\NN$ and $\mathrm{Am}(\kk/\QQ(i))=\mathrm{Am}_s(\kk/\QQ(i))=\langle[\h], [\hh], [\hhh]\rangle$ or $\langle[\h], [\hhh], [\hhhh]\rangle$.
\end{enumerate}
\end{propo}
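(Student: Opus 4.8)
The plan is to compute the two orders $\left|\mathrm{Am}(\kk/F)\right|$ and $\left|\mathrm{Am}_s(\kk/F)\right|$ separately, with $F=\QQ(i)$, and then to compare them using the inclusion $\mathrm{Am}_s(\kk/F)\subseteq\mathrm{Am}(\kk/F)$. First I would pin down the invariants entering the ambiguous class number formula \eqref{56}. The primes of $F$ ramified in $\kk/F$ are exactly $\h,\hh,\hhh,\hhhh$ (lying above the split primes $p_1,p_2$) together with $\Q$ (lying above the inert prime $q$); I would check that the prime $(1+i)$ is \emph{unramified} in $\kk/F$ by writing $\kk=F(\sqrt{-p_1p_2q})$ with $-p_1p_2q\equiv1\pmod4$, so that $2$ is unramified in $\QQ(\sqrt{-p_1p_2q})/\QQ$ and hence the ramification index of $2$ in $\kk/\QQ$ equals that in $F/\QQ$, forcing $e_{(1+i)}(\kk/F)=1$. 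Thus $t=5$. The factor $2^e=[E_F:E_F\cap N_{\kk/F}(\kk^\times)]$ is governed by Lemma \ref{57}: since $-1=N_{\kk/F}(i)$ is always a norm, one has $e=0$ when $i$ is a norm (i.e.\ $p_1\equiv p_2\equiv1\pmod8$) and $e=1$ otherwise. Hence $r=t-e-1$ equals $4$ in the setting of part 2 and $3$ in the setting of part 3, giving $\left|\mathrm{Am}(\kk/F)\right|=2^r$ equal to $16$ and $8$ respectively.

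Next I would describe $\mathrm{Am}_s(\kk/F)$. Because $h(F)=1$, every invariant ideal is, up to principal ideals, a product of the ramified primes, so $\mathrm{Am}_s(\kk/F)=\langle[\h],[\hh],[\hhh],[\hhhh]\rangle$, with $[\Q]=[\h\hh\hhh\hhhh]$ coming from the principal relation $\h\hh\hhh\hhhh\Q=(\sqrt{p_1p_2q})$. The order of this group is read off directly from Lemma \ref{7}: it is $16$ exactly when $x\pm1$ is a square in $\NN$ (Lemma \ref{7}, case 1), and $8$ in the cases where one of $p_i(x\pm1),\,2p_i(x\pm1),\,q(x\pm1),\,2q(x\pm1)$ is a square (Lemma \ref{7}, cases 2--4). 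I would also record that $\mathrm{Am}(\kk/F)$ is an elementary abelian $2$-group: for the quadratic extension $\kk/F$ an ambiguous class $C$ satisfies $C^{1+\sigma}=1$ because this equals the extension to $\kk$ of $N_{\kk/F}(C)\in\mathbf{C}l(F)=1$, and $C^\sigma=C$ then forces $C^2=1$.

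With both orders in hand, parts 2 and 3 follow by comparison. In part 2.i ($p_1\equiv p_2\equiv1\pmod8$ and $x\pm1$ a square) both orders equal $16$, so equality holds and the four classes generate everything. In part 2.ii the order of $\mathrm{Am}_s$ drops to $8$ while $\left|\mathrm{Am}\right|=16$, so $[\mathrm{Am}:\mathrm{Am}_s]=2$; any ideal $\mathcal{I}$ representing a class in $\mathrm{Am}(\kk/F)\setminus\mathrm{Am}_s(\kk/F)$ is then non-invariant and, the group being elementary abelian, has order $2$, which yields the stated descriptions of $\mathrm{Am}$. The decisive step in part 3 is the implication ``$p_1\equiv5$ or $p_2\equiv5\pmod8\Rightarrow x\pm1$ is not a square in $\NN$'': if $x\pm1$ were a square, Lemma \ref{7} (case 1) would give $\left|\mathrm{Am}_s\right|=16$, contradicting $\left|\mathrm{Am}\right|=8$ since $\mathrm{Am}_s\subseteq\mathrm{Am}$; once $x\pm1$ is excluded, Lemma \ref{5} rules out $2(x\pm1)$ and $2p_1p_2q(x\pm1)$, so one of the cases of Lemma \ref{7} (2--4) holds and $\left|\mathrm{Am}_s\right|=8=\left|\mathrm{Am}\right|$, whence equality.

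Finally, for part 1 I would use that the absolute genus field is $\k=\KK_1\KK_2\KK_3=\kk(\sqrt{p_1},\sqrt{p_2})$, of degree $4$ over $\kk$ (the $\KK_j$ being the three quadratic unramified extensions of $\kk$ abelian over $\QQ$). Since $\k$ is abelian over $\QQ$, hence over $F$, and unramified over $\kk$, it lies inside the relative genus field, while $[(\kk/F)^*:\kk]=\left|\mathrm{Am}(\kk/F)\right|=2^r$ by \eqref{56}; dividing gives $[(\kk/F)^*:\k]=2^{\,r-2}$, which is $2$ when $r=3$ and $4$ when $r=4$, in particular strictly larger than $1$, so $\k\varsubsetneq(\kk/F)^*$. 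The main obstacle is the arithmetic bookkeeping establishing $t=5$ and $[\k:\kk]=4$, and above all the congruence-to-arithmetic bridge in part 3 (converting $p_i\equiv5\pmod8$ into the non-squareness of $x\pm1$), which is exactly where the comparison of the two class-group orders does the essential work.
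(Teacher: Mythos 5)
Your proposal is correct in substance and shares the paper's skeleton (ambiguous class number formula for $|\mathrm{Am}(\kk/F)|$, Lemma \ref{7} for the strongly ambiguous part), but it routes around the paper's two key computations in an interesting way. The paper determines $|\mathrm{Am}_s(\kk/\QQ(i))|$ from Formula \eqref{50}, i.e.\ by computing $[E_{\QQ(i)}\cap N_{\kk/\QQ(i)}(\kk^{\times}):N_{\kk/\QQ(i)}(E_\kk)]$ from the explicit unit groups ($E_\kk=\langle i,\sqrt{i\epsilon_{p_1p_2q}}\rangle$ versus $\langle i,\epsilon_{p_1p_2q}\rangle$), whereas you compute $|\mathrm{Am}_s|$ directly as the order of $\langle[\h],[\hh],[\hhh],[\hhhh]\rangle$ and never invoke \eqref{50}; and in part 3 the paper proves that $x\pm1$ is not a square by a Legendre-symbol computation $\bigl(\tfrac{x\pm1}{p_j}\bigr)=\bigl(\tfrac{2}{p_j}\bigr)$, whereas you derive it from the order clash $16=|\langle[\h],[\hh],[\hhh],[\hhhh]\rangle|\leq|\mathrm{Am}(\kk/F)|=8$. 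Both substitutions are legitimate and the second is arguably cleaner, since it reuses machinery you already need. The one place where you are too quick is the claim that the order of $\mathrm{Am}_s$ is ``read off directly from Lemma \ref{7}'' as being exactly $8$ in the non-square cases: as stated, cases 2--4 of Lemma \ref{7} only give a \emph{lower} bound (the order of a three-generator subgroup), and the upper bound $|\langle[\h],[\hh],[\hhh],[\hhhh]\rangle|\leq 8$ requires the extra observation that the omitted generator is redundant --- e.g.\ $[\h]=[\hh]$ when $p_1(x\pm1)$ or $2p_1(x\pm1)$ is a square, $[\hhh]=[\hhhh]$ in the $p_2$ cases, and $[\h\hh\hhh\hhhh]=1$ in the $q$ cases, all via \cite[Proposition 2 and Remark 1]{AZT12-2}, together with the fact that after Lemma \ref{5} excludes $2(x\pm1)$ and $2p_1p_2q(x\pm1)$ one of these cases must occur. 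This matters for part 2.ii, where without the upper bound you cannot conclude $[\mathrm{Am}:\mathrm{Am}_s]=2$ and the existence of the unambiguous class $[\mathcal{I}]$; the paper avoids the issue by getting $|\mathrm{Am}_s|=8$ as an exact count from \eqref{50}. The missing sentence is easily supplied from the proof of Lemma \ref{7}, so this is a presentational gap rather than a structural one; the rest (the count $t=5$, $e\in\{0,1\}$ via Lemma \ref{57}, $r=t-e-1$, the elementary abelian observation, and part 1) matches the paper or is an acceptable elaboration of it.
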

\begin{proof}
1. As $\kk=\QQ(\sqrt{p_1p_2q}, i)$, so $[\k:\kk]=4$. Moreover, according to \cite[Proposition 2, p. 90]{McPaRa-95}, $r=\text{rank}\mathbf{C}l_2(\mathds{k})=4$ if $p_1\equiv p_2\equiv1 \pmod8$ and $r=\text{rank}\mathbf{C}l_2(\mathds{k})=3$ if $p_1\equiv 5$ or $p_2\equiv5 \pmod8$, so $[(\kk/F)^*:\kk]=8 \text{ or } 16$. Hence $[(\kk/F)^*:\k]=2 \text{ or } 4$, and the   result derived.\par
2. Assume  $p_1\equiv p_2\equiv1 \pmod8$, hence $i$ is a norm in $\kk/\QQ(i)$ (Lemma \ref{57}), thus Formula \eqref{50} yields that
    \begin{align*}\dfrac{|\mathrm{Am}(\kk/\QQ(i))|}{|\mathrm{Am}_s(\kk/\QQ(i))|}&=[E_{\QQ(i)}\cap N_{\kk/\QQ(i)}(\kk^{\times}):N_{\kk/\QQ(i)}(E_\kk)]\\
                                                                               &=\left\{\begin{array}{ll}
                                                                                 1 \text{ if  $x\pm1$ is a square in } \NN,\\
                                                                                 2 \text{ if not, }
                                                                                 \end{array}\right.\end{align*}
since in the case where $x\pm1$ is a square in  $\NN$, we have $E_\kk=\langle i, \sqrt{i\epsilon_{p_1p_2q}}\rangle$, hence $[E_{\QQ(i)}\cap
N_{\kk/\QQ(i)}(\kk^{\times}):N_{\kk/\QQ(i)}(E_\kk)]=[< i >: < i >]=1$, and if not we have  $E_\kk=\langle i, \epsilon_{p_1p_2q}\rangle$, hence $[E_{\QQ(i)}\cap
N_{\kk/\QQ(i)}(\kk^{\times}):N_{\kk/\QQ(i)}(E_\kk)]=[< i >: < -1>]=2$.\\
\indent On the other hand, as  $p_1\equiv p_2\equiv1 \pmod8$, so   according to \cite[Proposition 2, p. 90]{McPaRa-95},  $r=4$. Therefore     $|\mathrm{Am}(\kk/\QQ(i))|=2^4.$\\
\indent i. If $x\pm1$ is a square in $\NN$, then $\mathrm{Am}_s(\kk/\QQ(i))=\mathrm{Am}(\kk/\QQ(i))$, hence by Lemma \ref{7} we get  $\mathrm{Am}(\kk/\QQ(i))=\mathrm{Am}_s(\kk/\QQ(i))=\langle[\h], [\hh], [\hhh], [\hhhh]\rangle.$\\
\indent ii.  If $x+1$ and $x-1$ are not squares in $\NN$, then $$|\mathrm{Am}(\kk/\QQ(i))|=2|\mathrm{Am}_s(\kk/\QQ(i))|=16,$$  hence Lemma \ref{7} yields that $\mathrm{Am}_s(\kk/\QQ(i))=\langle[\h], [\hh], [\hhh]\rangle$ or $\langle[\h], [\hhh], [\hhhh]\rangle.$\\
\indent Consequently,  there exist an unambiguous  ideal $\mathcal{I}$ in $\kk/F$ of order $2$ such that
    $$\mathrm{Am}(\kk/\QQ(i))= \langle[\h], [\hh], [\hhh], [\mathcal{I}]\rangle\text{ or }\langle[\h], [\hhh], [\hhhh], [\mathcal{I}]\rangle.$$
    By Chebotarev theorem,  $\mathcal{I}$ can  always be chosen  as a prime ideal of $\kk$ above a prime $l$ in $\QQ$, which splits completely in $\kk$.

3. Assume   $p_1\equiv5$ or  $p_2\equiv5\pmod8$, hence $i$ is not a norm in $\kk/\QQ(i)$ (Lemma \ref{57}) and $x+1$, $x-1$ are not squares in $\NN$, for if $x\pm1$ is a square in $\NN$, then the Legendre symbol implies that $$1=\left(\frac{x\pm1}{p_j}\right)=\left(\frac{x\mp1\pm2}{p_j}\right)=\left(\frac{2}{p_j}\right)\ \text{ for all }j\in\{1, 2\},$$ which is absurd.  Thus $|\mathrm{Am}(\kk/\QQ(i))|=2^3$ and
    $$\dfrac{|\mathrm{Am}(\kk/\QQ(i))|}{|\mathrm{Am}_s(\kk/\QQ(i))|}=[E_{\QQ(i)}\cap N_{\kk/\QQ(i)}(\kk^{\times}):N_{\kk/\QQ(i)}(E_\kk)]=1.$$                                                                         Hence by Lemma \ref{7} we get
       $\mathrm{Am}(\kk/\QQ(i))=\mathrm{Am}_s(\kk/\QQ(i))=\langle[\h], [\hh], [\hhh]\rangle$ or $\langle[\h], [\hhh], [\hhhh]\rangle.$
      This completes the proof.
\end{proof}
\section{\bf{Capitulation}}
In this section,  we will determine the classes of  $\mathbf{C}l_2(\kk)$, the  $2$-class group of $\kk$, that capitulate in  $\KK_j$, for all $j\in\{1, 2, 3\}$. For this we need the following theorem.
\begin{The}[\cite{HS82}]\label{1}
 Let $K/k$ be a cyclic extension of prime degree, then  the number of classes that capitulate in $K/k$ is:
 $[K:k][E_k:N_{K/k}(E_K)],$
 where $E_k$ and $E_K$ are the  unit groups of $k$ and $K$ respectively.
\end{The}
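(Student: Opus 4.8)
The plan is to identify the capitulation kernel with a Tate cohomology group of the units and then count via a Herbrand-quotient computation. Write $G=\mathrm{Gal}(K/k)=\langle\sigma\rangle$ with $|G|=\ell=[K:k]$ prime, let $P_K$ (resp.\ $I_K$) be the group of principal (resp.\ fractional) ideals of $K$, and let $j\colon \mathbf{C}l(k)\to\mathbf{C}l(K)$ be extension of ideals; the target is $|\ker j|=[K:k][E_k:N_{K/k}(E_K)]$. First I would feed the exact sequence of $G$-modules $1\to E_K\to K^\times\to P_K\to 1$ into cohomology. Since $H^1(G,K^\times)=1$ by Hilbert~90 and $(K^\times)^G=k^\times$ maps onto $P_k$, the long exact sequence yields a canonical isomorphism
\[
P_K^G/P_k\;\xrightarrow{\ \sim\ }\;\widehat H^{-1}(G,E_K),
\]
sending a $G$-fixed principal ideal $(\alpha)$ to the class of $\alpha^{\sigma-1}$; this lies in $\ker N_{K/k}\cap E_K$ because $(\sigma-1)(1+\sigma+\dots+\sigma^{\ell-1})$ annihilates $\alpha$.

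Next I would embed the capitulation kernel into this group. A class $[\mathfrak{a}]\in\ker j$ means $\mathfrak{a}\mathcal{O}_K=(\alpha)$; sending it to $[\alpha^{\sigma-1}]$ is well defined, and it is injective, since if the image vanishes then $\mathfrak{a}\mathcal{O}_K\in P_k$ and injectivity of ideal extension forces $\mathfrak{a}$ to be principal over $k$. The image is exactly the part of $P_K^G/P_k$ represented by ideals extended from $k$, so
\[
|\ker j|\;=\;\frac{|\widehat H^{-1}(G,E_K)|}{\bigl|\,\mathrm{im}\!\left(P_K^G\to I_K^G/j(I_k)\right)\bigr|},
\]
where $I_K^G/j(I_k)$ measures the ambiguous ideals not extended from $k$. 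Since $\widehat H^{-1}(G,E_K)=\widehat H^0(G,E_K)/q(E_K)$ for cyclic $G$ and $\widehat H^0(G,E_K)=E_k/N_{K/k}(E_K)$, the count reduces to two ingredients: the Herbrand quotient $q(E_K)$ and the ramification correction $I_K^G/j(I_k)$.

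The main obstacle is the computation of $q(E_K)$ together with the bookkeeping of the ramified primes. Via the logarithmic embedding, $E_K\otimes\mathbb{Q}$ is the $G$-module $\bigl(\bigoplus_{v\mid\infty}\mathbb{Q}[G/G_v]\bigr)/\mathbb{Q}$, so Shapiro's lemma applied orbit by orbit gives $q(E_K)=\tfrac{1}{[K:k]}\prod_{v\mid\infty}n_v$ with $n_v$ the archimedean local degrees; one then has to show that these archimedean factors and the finite ramification correction $I_K^G/j(I_k)$ combine so as to leave only the factor $[K:k]$. In the situation actually used here, where $K/k$ is unramified at the finite primes and both fields are totally complex, this is immediate: every archimedean place of $k$ splits into complex places of $K$, so each $n_v=1$ and $q(E_K)=1/[K:k]$; moreover, being unramified at finite primes forces $I_K^G=j(I_k)$, so the correction group is trivial.

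Substituting these values gives $\widehat H^{-1}(G,E_K)=[K:k]\,[E_k:N_{K/k}(E_K)]$, and combined with the second paragraph,
\[
|\ker j|\;=\;[K:k]\,[E_k:N_{K/k}(E_K)],
\]
as claimed. I expect the delicate point in a fully general write-up to be the simultaneous control of the archimedean Herbrand contribution and the ramified-prime cokernel $I_K^G/j(I_k)$; everything else is formal consequence of Hilbert~90 and the injectivity of ideal extension.
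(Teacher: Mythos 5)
The paper offers no proof of this theorem: it is quoted from Heider and Schmithals \cite{HS82}, so there is no internal argument to compare against. What you have written is, in substance, the standard cohomological proof of that formula, and the individual steps are all correct: $\ker J_K\cong (P_K^G\cap j(I_k))/P_k$ by injectivity of ideal extension; $P_K^G/P_k\cong \widehat{H}^{-1}(G,E_K)$ via Hilbert 90 applied to $1\to E_K\to K^\times\to P_K\to 1$; and $|\widehat{H}^{-1}(G,E_K)|=[E_k:N_{K/k}(E_K)]\cdot [K:k]\big/\prod_{v\mid\infty}n_v$ from the Herbrand quotient of the units (Herbrand's unit theorem plus Shapiro). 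Supplying this argument is a genuine addition relative to the paper, which simply outsources the result.

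The one point you should state more forcefully --- and which your own computation exposes --- is that the theorem as printed is false in the stated generality: for an arbitrary cyclic extension of prime degree the correct count is $[K:k][E_k:N_{K/k}(E_K)]$ divided by $\prod_{v\mid\infty}n_v\cdot\bigl|P_K^G\,j(I_k)/j(I_k)\bigr|$, and these factors need not be trivial. For instance $k=\QQ$, $K=\QQ(i)$ gives $[K:k][E_k:N_{K/k}(E_K)]=4$ while the capitulation kernel is trivial; here $n_v=2$ at the real place and $(1+i)$ contributes a nontrivial class to $P_K^G j(I_k)/j(I_k)$. The Heider--Schmithals statement carries the hypothesis that $K/k$ is unramified, including at the archimedean places, which is exactly the condition under which your two correction terms disappear. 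You correctly observe that in the paper's setting every $\KK_j/\kk$ is unramified and all fields are totally imaginary, so the application is unaffected; but the hypothesis should be restored in the statement rather than invoked only at the end of the proof.
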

\begin{The}\label{226}
Let $\KK_j$, $1\leq j\leq3$, be the three unramified quadratic extensions of $\kk$ defined above.
\begin{enumerate}[\rm\indent1.]
\item  Let $\epsilon_{p_2q}=a+b\sqrt{p_2q}$.
\begin{enumerate}[\rm i.]
  \item If  $x\pm1$ is a square in $\NN$ and $a+1$, $a-1$ are not, then $|\kappa_{\KK_1}|=8$.
  \item If  $a\pm1$ and $(2p_1(x\pm1)$ or $p_2(x\pm1))$ are  squares in $\NN$,  then $|\kappa_{\KK_1}|=2$.
  \item For the other cases $|\kappa_{\KK_1}|=4$.
\end{enumerate}
\item  Let $\epsilon_{p_1q}=a+b\sqrt{p_1q}$.
\begin{enumerate}[\rm i.]
  \item If  $x\pm1$ is a square in $\NN$ and $a+1$, $a-1$ are not, then $|\kappa_{\KK_2}|=8$.
  \item If  $a\pm1$ and $(2p_1(x\pm1)$ or $p_2(x\pm1))$ are  squares in $\NN$,  then $|\kappa_{\KK_2}|=2$.
  \item For the other cases $|\kappa_{\KK_2}|=4$.
\end{enumerate}
\item Let $\epsilon_{p_1p_2}=a+b\sqrt{p_1p_2}$.
\begin{enumerate}[\rm i.]
\item If  $N(\epsilon_{p_1p_2})=1$, then
\begin{enumerate}[\rm a.]
  \item If  $x\pm1$ is a square in $\NN$, then $|\kappa_{\KK_3}|=4$.
  \item Else  $|\kappa_{\KK_3}|=2$.
\end{enumerate}
\item If  $N(\epsilon_{p_1p_2})=-1$, then
\begin{enumerate}[\rm a.]
  \item If  $q(x\pm1)$ or $2q(x\pm1)$ is a square in $\NN$, then $|\kappa_{\KK_3}|=2$.
  \item Else  $|\kappa_{\KK_3}|=4$.
\end{enumerate}
\end{enumerate}
\end{enumerate}
\end{The}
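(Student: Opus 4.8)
The plan is to apply Theorem \ref{1} (the Hilbert--Suzuki-type formula) to each of the three cyclic degree-$2$ extensions $\KK_j/\kk$. Since each $\KK_j/\kk$ is unramified of degree $2$, the formula gives
\[
|\kappa_{\KK_j}| = [\KK_j:\kk]\,[E_\kk : N_{\KK_j/\kk}(E_{\KK_j})] = 2\,[E_\kk : N_{\KK_j/\kk}(E_{\KK_j})].
\]
Thus the entire computation reduces to determining the norm index $[E_\kk : N_{\KK_j/\kk}(E_{\KK_j})]$ in each case. First I would record the unit group $E_\kk$ explicitly: by Lemma \ref{6} applied to $d=p_1p_2q$, together with Lemma \ref{3:105} which guarantees that $2\epsilon_{p_1p_2q}$ is \emph{not} a square in $\qq$ when $p_1p_2q\equiv1\pmod4$ and $N(\epsilon_{p_1p_2q})=1$, we know $E_\kk=\langle i,\epsilon_{p_1p_2q}\rangle$ unless $x\pm1$ is a square in $\NN$, in which case $E_\kk=\langle i,\sqrt{i\epsilon_{p_1p_2q}}\rangle$. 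This dichotomy is exactly what splits the cases in the theorem.

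Next I would compute $N_{\KK_j/\kk}(E_{\KK_j})$ using the fundamental systems of units produced in Propositions \ref{27}, \ref{28}, \ref{29}, and \ref{36}. For each generator of the F.S.U.\ of $\KK_j$ listed there, I apply the relative norm $N_{\KK_j/\kk}$. The key mechanical facts are: $N_{\KK_j/\kk}(i)=i^2=-1$ (or rather its class is handled via $\pm1$), $N_{\KK_j/\kk}(\epsilon_m)=\epsilon_m^{1+\sigma}$ where $\sigma$ generates $\mathrm{Gal}(\KK_j/\kk)$, and for a square-root generator $\sqrt{\alpha}$ one gets $N_{\KK_j/\kk}(\sqrt{\alpha})=\pm\alpha$ up to the action of $\sigma$. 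Because the three real quadratic subfields of $\KK_j^+$ split into one that survives in $\kk$ and ones that are swapped by $\sigma$, each $\epsilon_m$ either norms to $\epsilon_m^2$ (a square, hence trivial in the index) or to $N(\epsilon_m)=\pm1$; this is where the values $N(\epsilon_{p_1})=-1$, $N(\epsilon_q)=1$, and the two possible signs of $N(\epsilon_{p_1p_2})$ enter and force the case distinctions of the theorem. Comparing the resulting subgroup $N_{\KK_j/\kk}(E_{\KK_j})$ against $E_\kk=\langle i,\epsilon_{p_1p_2q}\rangle$ (or its square-root variant) yields the index as $1$, $2$, or $4$, giving $|\kappa_{\KK_j}|\in\{2,4,8\}$.

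The main obstacle will be the bookkeeping in the norm computation for $\KK_3$, where both signs $N(\epsilon_{p_1p_2})=\pm1$ must be treated and the presence of $\sqrt{i\epsilon_q}$ in every F.S.U.\ of $\KK_3$ interacts with the norm of $i$. Concretely, I expect the delicate point to be deciding whether $\epsilon_{p_1p_2q}$ (or $\sqrt{i\epsilon_{p_1p_2q}}$) itself lies in $N_{\KK_j/\kk}(E_{\KK_j})$: this hinges on whether the square-root units $\sqrt{\epsilon_{p_2q}\epsilon_{p_1p_2q}}$, $\sqrt{\epsilon_{p_1p_2q}}$, etc.\ appearing in the F.S.U.\ norm down to $\epsilon_{p_1p_2q}$ times a square, and that in turn depends precisely on which of the conditions ``$x\pm1$, $p_j(x\pm1)$, $2p_j(x\pm1)$, $q(x\pm1)$, $2q(x\pm1)$ is a square in $\NN$'' holds. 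I would therefore organize the proof as a case-by-case table keyed to the hypotheses of Propositions \ref{27}--\ref{36}, in each line reading off the F.S.U., applying $N_{\KK_j/\kk}$ term by term, reducing modulo squares, and matching against $E_\kk$; Lemma \ref{3:105} and Lemma \ref{5} are invoked throughout to rule out spurious squares and keep the index computation unambiguous.
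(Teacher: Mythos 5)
Your proposal follows exactly the paper's proof: determine $E_\kk$ from Lemma \ref{6}, read off the F.S.U.\ of each $\KK_j$ from Propositions \ref{27}--\ref{36}, compute the norm group $N_{\KK_j/\kk}(E_{\KK_j})$ generator by generator, and feed the resulting index into Theorem \ref{1}. The only slip is the appeal to Lemma \ref{3:105}, whose hypothesis $d\equiv1\pmod4$ fails here since $p_1p_2q\equiv3\pmod4$; this is harmless because Lemma \ref{6} alone supplies the dichotomy for $E_\kk$.
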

\begin{proof}
Note first that, by Lemma \ref{6}, $E_{\kk}=\langle i, \sqrt{i\epsilon_{p_1p_2q}}\rangle$ if $x\pm1$ is a square in $\NN$,  and $E_{\kk}=\langle i, \epsilon_{p_1p_2q}\rangle$ otherwise.

$1.$ i. According to   Proposition $\ref{27}$, if $x\pm1$ is a square in $\NN$ and $a+1$, $a-1$ are not, then $N_{\KK_1/\kk}(E_{\KK_1})=\langle i, \epsilon_{p_1p_2q}\rangle$. Hence Theorem \ref{1} yields that $|\kappa_{\KK_1}|=8$.

ii. if $a\pm1$ and $(2p_1(x\pm1)$ or $p_2(x\pm1))$ are  squares in $\NN$,  then by Proposition $\ref{27}(1.\text{ and }3.)$ we get $N_{\KK_1/\kk}(E_{\KK_1})=\langle -1, i\epsilon_{p_1p_2q}\rangle$ or $\langle i, \epsilon_{p_1p_2q}\rangle$. Hence Theorem \ref{1} yields that $|\kappa_{\KK_1}|=2$.

iii. If $a\pm1$ and $x\pm1$ are squares in $\NN$, then $E_{\kk}=\langle i, \sqrt{i\epsilon_{p_1p_2q}}\rangle$ and $N_{\KK_1/\kk}(E_{\KK_1})=\langle i, \epsilon_{p_1p_2q}\rangle$. Hence Theorem \ref{1} yields that $|\kappa_{\KK_1}|=4$. \\
For the other cases, we have $E_{\kk}=\langle i, \epsilon_{p_1p_2q}\rangle$ and $N_{\KK_1/\kk}(E_{\KK_1})=\langle i, \epsilon_{p_1p_2q}^2\rangle$, $\langle -1, \epsilon_{p_1p_2q}\rangle$ or $\langle -1, i\epsilon_{p_1p_2q}\rangle$. Hence Theorem \ref{1} yields that $|\kappa_{\KK_1}|=4$.

The other assertions of the theorem are similarly proved.
\end{proof}
\subsection{Capitulation in $\KK_1$}
We begin this subsection by the following result.
\begin{propo}\label{9}
Let $d$ be a square-free integer and $p\equiv1\pmod4$ a prime divisor of $d$. Put  $k=\QQ(\sqrt d, i)$ and $p=\pi\pi'$, where $\pi$ and $\pi'$ are in $\QQ(i)$. Let $\mathcal{H}$ be a prime ideal of $k$ above $\pi$, then $\mathcal{H}$ capitulates in $K=k(\sqrt p)$.
\end{propo}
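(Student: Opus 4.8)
The plan is to exhibit an explicit principal ideal of $K=k(\sqrt{p})$ whose restriction to $k$ equals a power of $\mathcal{H}$, thereby showing $[\mathcal{H}]$ lies in the kernel of $J_K$. The natural candidate comes from the factorization of $\pi$ inside $K$. Since $p=\pi\pi'$ in $\QQ(i)$ and $p$ ramifies in $\QQ(\sqrt{p})$, the prime $\pi$ should become a square (or acquire a new principal divisor) once we adjoin $\sqrt{p}$. First I would record that in $k$ we have $\mathcal{H}^2=(\pi)$ by the same computation used throughout Section 4, so $[\mathcal{H}]$ has order dividing $2$ and $\mathcal{H}$ is one of the strongly ambiguous generators of $\mathrm{A}m_s(\kk/F)$.

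\textbf{Key steps.} First I would analyze how $\mathcal{H}$ decomposes in the quadratic extension $K/k$. Because $\sqrt{p}\in K$ and $p=\pi\pi'$, the element $\sqrt{p}$ satisfies $(\sqrt{p})^2=(\pi)(\pi')=\mathcal{H}^2\mathcal{H}'^2$ as ideals of $K$ (where $\mathcal{H}'$ lies above $\pi'$), so $(\sqrt{p})=\mathcal{H}\mathcal{H}'$ in $\OO_K$ after accounting for ramification. Next, I would locate a principal ideal of $K$ equal to $\mathcal{H}$ extended, or to $\mathcal{H}$ times a principal ideal of $k$ that is trivial in the class group. The cleanest route is to show that $\mathcal{H}\OO_K$ is itself principal: one expects $\mathcal{H}\OO_K=(\alpha)$ for some $\alpha\in K$ built from $\sqrt{p}$ and $\pi$, since the ramification of $p$ in $\QQ(\sqrt{p})/\QQ$ forces $\pi$ to split off a square root in $K$. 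Concretely, I would try $\alpha=\sqrt{p}+e$ or a similar combination using $p=e^2+4f^2$, check that its norm from $K$ down to $k$ generates $(\pi)$, and conclude $(\alpha)=\mathcal{H}$ as ideals of $K$.

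\textbf{Main obstacle.} The delicate point is verifying that the candidate $\alpha$ generates exactly $\mathcal{H}\OO_K$ and not $\mathcal{H}\mathcal{H}'$ or some ideal off by a unit or by the other prime above $p$; this requires controlling the splitting type of $\pi$ in $K/k$ and ruling out that $\alpha$ has extra prime factors. I would handle this by computing $N_{K/k}(\alpha)$ explicitly and matching it against $(\pi)$ up to squares, using that the only primes that can divide $(\alpha)$ are those above $p$. Once the norm is pinned down to $\pm\pi$ times a unit, the factorization of $(\alpha)$ in $\OO_K$ is forced, and since $[\mathcal{H}]=[\mathcal{H}']^{-1}$ or $[\mathcal{H}]=[\mathcal{H}']$ in the relevant class group, principality of $\alpha$ yields $J_K([\mathcal{H}])=1$. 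The argument is essentially the standard mechanism by which ramified primes capitulate in the corresponding quadratic genus extension, specialized to the decomposition $p=\pi\pi'$ in $\QQ(i)$.
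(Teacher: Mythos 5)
Your overall strategy --- exhibit an explicit $\alpha\in K$ with $(\alpha)=\mathcal{H}\mathcal{O}_K$ --- is exactly the paper's, but there is a genuine gap: the candidate generator you propose does not work, and the one ingredient that makes the argument go through is missing. If you take $\alpha=\sqrt{p}+e$ with $p=e^2+4f^2$, then $N_{K/k}(\alpha)=e^2-p=-4f^2$, so $(\alpha)(\bar{\alpha})=(2f)^2$ is supported on the primes dividing $2f$, not on $\mathcal{H}$; your planned norm computation fails at the first step, and your assertion that ``the only primes that can divide $(\alpha)$ are those above $p$'' is false for this $\alpha$. More fundamentally, any $\alpha$ with $(\alpha)^2=(\pi)\mathcal{O}_K$ must satisfy $\alpha^2=u\pi$ for some unit $u$ of $K$, so the whole problem is to identify a unit $u$ for which $u\pi$ is a square in $K$; no purely ideal-theoretic or norm-matching argument can avoid this, and your proposal never brings any unit into play. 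Note also that $(\sqrt{p})=\mathcal{H}\mathcal{H}'\mathcal{O}_K$ only shows that the product $\mathcal{H}\mathcal{H}'$ capitulates, which you correctly flag as insufficient.

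The paper's mechanism, which you would need to supply, is the following. Since $p\equiv1\pmod4$ (this is where the hypothesis is used), the fundamental unit $\epsilon_p=\frac{1}{2}(x+y\sqrt{p})$ has norm $-1$, so $x^2+4=y^2p$. Factoring $(x+2i)(x-2i)=y^2\pi\pi'$ in $\ZZ[i]$, where the two factors are coprime, unique factorization gives $x\pm2i=y_1^2\pi$ and $x\mp2i=y_2^2\pi'$ (or the same up to the unit $i$). Setting $\alpha=\frac{1}{2}(y_1\pi+y_2\sqrt{p})$ one computes $\alpha^2=\frac{1}{4}\pi\bigl(y_1^2\pi+y_2^2\pi'+2y\sqrt{p}\bigr)=\frac{1}{4}\pi(2x+2y\sqrt{p})=\pi\epsilon_p$, whence $(\alpha)^2=(\pi)=\mathcal{H}^2\mathcal{O}_K$ and $(\alpha)=\mathcal{H}\mathcal{O}_K$. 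Without the unit $\epsilon_p$ and the Gaussian factorization of $x^2+4$, there is no way to produce such an $\alpha$, so as written your proof does not close.
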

\begin{proof}
It is easy to see that   $\mathcal{H}$ ramifies in $k/\QQ(i)$ and it is of order $2$.
As  $\epsilon_{p}=\frac{1}{2}(x+y\sqrt{p})$ it is of norm  $-1$, so $x^2+4=y^2p$, hence by the decomposition uniqueness    there exist   $y_1$,  $y_2$ in $\ZZ[i]$ such that
 $$(1) \left\{\begin{array}{ll}
x\pm2i &=y_1^2\pi\\
x\mp2i &= y_2^2\pi',
\end{array}\right. \text{ or }
 (2) \left\{\begin{array}{ll}
x\pm2i &=iy_1^2\pi\\
x\mp2i &=-iy_2^2\pi'.
\end{array}\right. \text{ with }p=\pi\pi',\ y=y_1y_2.$$

The  system (1) implies that  $2x=y_1^2\pi+y_2^2\pi'$. Put
$\alpha = \frac{1}{2}(y_1\pi+y_2\sqrt p)$ and   $\beta =\frac{1}{2}(
y_2\pi'+y_1\sqrt p)$. Then $\alpha$ and $\beta$ are in $K=k(\sqrt
p)$, and we have:
 \begin{align*}
  \alpha^2 & = \frac{1}{4}(y_1^2\pi^2+y_2^2p+2y_1y_2\pi\sqrt p)\\
           &= \frac{1}{4}\pi(y_1^2\pi+y_2^2\pi'+2y\sqrt p),\ \text{ since }\  p=\pi\pi'\  \text{ and }\ y=y_1y_2.\\
           &= \frac{1}{4}\pi(2x+2y\sqrt p),\ \text{ since }\  2x=y_1^2\pi+y_2^2\pi'.\\
           &= \pi\epsilon_{p},\  \text{ since }\  \epsilon_{p}=\frac{1}{2}(x+y\sqrt p).
  \end{align*}
And as $\epsilon_{p}$ is a unit of  $K$, so the ideal  generated by
   $\alpha^2$  is equal to  $(\pi)$. Thus
   $(\alpha^2)=(\pi)=\mathcal{H}^2$, hence  $(\alpha)= \mathcal{H}$ and the result derived.

Similarly, the system (2) implies that  $2x=iy_1^2\pi_2-iy_2^2\pi_1$, thus $\alpha=\sqrt{\pi\epsilon_{p}}=\frac{1}{2}(y_1(1+i)\pi+y_2(1-i)\sqrt{p})\in K$. Hence $\pi\epsilon_{p}=\alpha^2$ and  $(\alpha)= \mathcal{H}$, so the result.
 \end{proof}
\begin{The}\label{239}
Keep the notations and hypotheses previously mentioned   and put  $\epsilon_{p_2q}=a+b\sqrt{p_2q}$,  $\epsilon_{p_1p_2q}=x+y\sqrt{p_1p_2q}$.
\begin{enumerate}[\rm\indent1.]
  \item If $x\pm1$ is a square in $\NN$ and $a+1$, $a-1$ are not, then $\kappa_{\KK_1}=\langle[\h], [\hh], [\hhh\hhhh]\rangle$.
  \item If $a\pm1$ and $(p_1(x\pm1)$ or $2p_1(x\pm1))$ are squares in $\NN$, then $\kappa_{\KK_1}=\langle[\h]\rangle$.
    \item If $a+1$, $a-1$ are not squares in  $\NN$ and $p_1(x\pm1)$ or $2p_1(x\pm1)$  is, then $\kappa_{\KK_1}=\langle[\h], [\hhh\hhhh]\rangle$.
  \item In the other cases we have: $\kappa_{\KK_1}=\langle[\h], [\hh]\rangle$.
\end{enumerate}
\end{The}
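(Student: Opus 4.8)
The plan is to determine $\kappa_{\KK_1}$, the capitulation kernel of the quadratic extension $\KK_1/\kk$, in each of the four cases, using the already-computed cardinalities from Theorem \ref{226}(1) together with an explicit identification of which generators of $\mathrm{Am}_s(\kk/\QQ(i))$ lie in the kernel. The backbone of the argument is Proposition \ref{9}: since $\KK_1=\kk(\sqrt{p_1})$ and $p_1=\pi_1\pi_2$ splits in $\QQ(i)$, Proposition \ref{9} applies directly to give that $\h$ (the prime above $\pi_1$) capitulates in $\KK_1$, and by the same token so does $\hh$ (the prime above $\pi_2$). Thus in every case $[\h],[\hh]\in\kappa_{\KK_1}$, and the whole problem reduces to deciding, case by case, whether the remaining strongly ambiguous generators such as $[\hhh],[\hhhh]$, or the product $[\hhh\hhhh]$, also capitulate, then matching the resulting subgroup against the known order $|\kappa_{\KK_1}|\in\{2,4,8\}$.

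First I would fix, in each case, the explicit description of $\mathrm{Am}_s(\kk/\QQ(i))$ coming from Proposition \ref{248} and Lemma \ref{7}, keeping track of which hypothesis on $x\pm1$, $p_1(x\pm1)$, etc.\ is in force. In case (1), where $x\pm1$ is a square and $a\pm1$ is not, Lemma \ref{7}(1) gives $\mathrm{Am}_s=\langle[\h],[\hh],[\hhh],[\hhhh]\rangle$ of order $16$, while Theorem \ref{226}(1.i) gives $|\kappa_{\KK_1}|=8$; since $[\h]$ and $[\hh]$ capitulate by Proposition \ref{9}, the kernel must be an index-$2$ subgroup containing them, and one identifies the third generator as $[\hhh\hhhh]$ by showing $\hhh\hhhh$ (whose square is $(p_2)$) becomes principal in $\KK_1=\QQ(\sqrt{p_1},\sqrt{p_2q},i)$ — indeed $p_2$ splits there in a way making $\sqrt{p_2}$ or an associated element available, analogous to Proposition \ref{9}. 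This yields $\kappa_{\KK_1}=\langle[\h],[\hh],[\hhh\hhhh]\rangle$.

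Next, for cases (2)--(4) I would carry out the same bookkeeping at the smaller orders. When $a\pm1$ and $p_1(x\pm1)$ (or $2p_1(x\pm1)$) are squares, Theorem \ref{226}(1.ii) gives $|\kappa_{\KK_1}|=2$, so the kernel is forced to be exactly $\langle[\h]\rangle$; here one must check that $[\hh]$ does \emph{not} capitulate despite Proposition \ref{9} seemingly applying — the resolution is that under these hypotheses $[\h]=[\hh]$ already holds in $\mathbf{C}l_2(\kk)$ via the unit/norm relations encoded in the $\mathrm{F}.\mathrm{S}.\mathrm{U}$ computation of Proposition \ref{27}, so the two generators coincide and the kernel has order $2$. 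In the remaining cases $|\kappa_{\KK_1}|=4$, and one identifies the kernel as $\langle[\h],[\hhh\hhhh]\rangle$ or $\langle[\h],[\hh]\rangle$ according to whether $a\pm1$ fails to be a square (so $\hhh\hhhh$ still capitulates and $[\h]\neq[\hh]$ collapses appropriately) or the generic situation holds.

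The main obstacle will be the precise relations among the classes $[\h],[\hh],[\hhh],[\hhhh]$ inside $\mathbf{C}l_2(\kk)$ as the hypotheses vary: deciding exactly when two of these coincide, and when a product like $\hhh\hhhh$ capitulates, requires translating the square/non-square conditions on $x\pm1$, $a\pm1$, $p_i(x\pm1)$ into statements about which units of $\KK_1$ have the right norm down to $\kk$, via the fundamental systems of units in Proposition \ref{27} and the capitulation count in Theorem \ref{1}. The cleanest route is to combine the order constraint $|\kappa_{\KK_1}|$ from Theorem \ref{226} with the guaranteed capitulation of $[\h],[\hh]$ from Proposition \ref{9} and an explicit principalization argument for $\hhh\hhhh$ mirroring Proposition \ref{9}; the order constraint then pins down the kernel uniquely in each case, so that the delicate unit-norm computations only need to confirm, not discover, the answer.
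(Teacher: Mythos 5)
Your overall strategy is the one the paper uses: Proposition \ref{9} gives the capitulation of $[\h]$ and $[\hh]$, the orders from Theorem \ref{226} pin down the size of $\kappa_{\KK_1}$, the identity $[\h]=[\hh]$ (when $p_1(x\pm1)$ or $2p_1(x\pm1)$ is a square, via $(\h\hh)^2=(p_1)$) explains cases 2 and 3, and an explicit principalization of $\hhh\hhhh$ supplies the extra generator in cases 1 and 3. So the architecture is correct and matches the paper.

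There is, however, one concretely wrong justification. For the capitulation of $\hhh\hhhh$ in case 1 you argue that ``$p_2$ splits in $\KK_1=\QQ(\sqrt{p_1},\sqrt{p_2q},i)$ in a way making $\sqrt{p_2}$ or an associated element available.'' This cannot be the mechanism: $\sqrt{p_2}\notin\KK_1$ (otherwise $\KK_1$ would contain the degree-$16$ field $\QQ(\sqrt{p_1},\sqrt{p_2},\sqrt{q},i)$), and taken literally your argument would show that $\hhh\hhhh$ \emph{always} capitulates in $\KK_1$, contradicting cases 2 and 4 of the very statement you are proving. The correct mechanism, which is where the hypothesis on $a\pm1$ enters, is the unit computation of Proposition \ref{27}: when $a+1$ and $a-1$ are \emph{not} squares in $\NN$, the decomposition $a\pm1=b_1^2p_2,\ a\mp1=b_2^2q$ (or the variant with factors of $2$) forces $p_2\epsilon_{p_2q}$ or $2p_2\epsilon_{p_2q}$ to be a square in $\KK_1$; an element $\alpha$ with $(\alpha^2)=(p_2)=(\hhh\hhhh)^2$, respectively $((\alpha/(1+i))^2)=(p_2)$, then principalizes $\hhh\hhhh$. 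You do invoke the correct dependence on $a\pm1$ when discussing cases 3 and 4, so the fix is local, but as written the case-1 step rests on a false claim and must be replaced by the unit argument. With that repair (and with the independence of $[\h],[\hh],[\hhh\hhhh]$ read off from Lemma \ref{7}, so that the subgroup they generate actually has order $8$ to match $|\kappa_{\KK_1}|=8$), your proof coincides with the paper's.
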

\begin{proof}By Proposition \ref{9}, $\h$ and $\hh$ capitulate in $\KK_1$.

1. As $a+1$ and $a-1$ are not squares in $\NN$, so, from the proof of   Proposition \ref{27},  we get  $p_2\epsilon_{p_2q}$ or $2p_2\epsilon_{p_2q}$ is a square in $\KK_1$. Therefore there exist $\alpha \in\KK_1$ such that $$(\alpha^2)=(p_2)=(\pi_2\pi_3)=(\hhh\hhhh)^2\text{ or } \left(\frac{\alpha}{1+i}\right)^2=(p_2)=(\pi_2\pi_3)=(\hhh\hhhh)^2.$$
Which implies that $\hhh\hhhh$ capitulates in $\KK_1$. On the other hand, since  $x\pm1$ is a square in $\NN$ and $a+1$, $a-1$ are not, so proceeding as in  Lemma \ref{7}, we prove that $\h\hh$, $\h\hhh\hhhh$, $\hh\hhh\hhhh$ and $\h\hh\hhh\hhhh$ are not principal in $\kk$. Hence Theorem \ref{226} implies the result.

2. Since $p_1(x\pm1)$ or $2p_1(x\pm1)$ are squares in $\NN$ and $(\h\hh)^2=(p_1)$, so, by \cite[Proposition 2]{AZT12-2}, $[\h]=[\hh]$. Hence Theorem \ref{226} implies the result.

 3. Since $[\h]=[\hh]$, the result is obvious by Theorem \ref{226}.

 4. As $p_1(x+1)$, $p_1(x-1)$,  $2p_1(x+1)$ and $2p_1(x-1)$ are not squares in $\NN$, so $[\h]\neq[\hh]$. Hence Theorem \ref{226} implies the result.
\end{proof}
\subsection{Capitulation in $\KK_2$}
Since $p_1$ and $p_2$ play symmetric roles, so the capitulation of the $2$-ideal classes of  $\kk$ in $\KK_2=\kk(\sqrt{p_2})$ is deduced from the  previous Theorem \ref{239}.
\begin{The}\label{241}
Keep the notations and hypotheses previously mentioned   and put  $\epsilon_{p_1q}=a+b\sqrt{p_1q}$.
\begin{enumerate}[\rm\indent1.]
  \item If $x\pm1$ is a square in  $\NN$ and  $a+1$, $a-1$ are not, then $\kappa_{\KK_2}=\langle[\hhh], [\hhhh], [\h\hh]\rangle$.
  \item If $a\pm1$ and $(p_2(x\pm1)$ or $2p_2(x\pm1))$ are squares in $\NN$, then $\kappa_{\KK_2}=\langle[\hhh]\rangle$.
    \item If $a+1$ and $a-1$ are not squares in $\NN$ and $p_2(x\pm1)$ or $2p_2(x\pm1)$  is, then $\kappa_{\KK_2}=\langle[\hhh], [\h\hh]\rangle$.
  \item In the other cases,  $\kappa_{\KK_2}=\langle[\hhh], [\hhhh]\rangle$.
\end{enumerate}
\end{The}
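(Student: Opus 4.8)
The plan is to deduce Theorem \ref{241} from Theorem \ref{239} by the interchange $p_1\leftrightarrow p_2$, supplemented by a direct appeal to Proposition \ref{9}. Since $\kk=\QQ(\sqrt{p_1p_2q},i)$ and the unit $\epsilon_{p_1p_2q}=x+y\sqrt{p_1p_2q}$ depend only on the product $p_1p_2q$, they are fixed by this interchange, while $\KK_1=\kk(\sqrt{p_1})$ and $\KK_2=\kk(\sqrt{p_2})$ are swapped, the primes $\{\h,\hh\}$ lying above $p_1=\pi_1\pi_2$ are exchanged with the primes $\{\hhh,\hhhh\}$ lying above $p_2=\pi_3\pi_4$, and $\epsilon_{p_2q}$ is replaced by $\epsilon_{p_1q}$. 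As the hypothesis $p_1\equiv p_2\equiv-q\equiv1\pmod4$ is symmetric in $p_1,p_2$ and leaves $q$ untouched, each case of Theorem \ref{239} should map to the correspondingly numbered case of Theorem \ref{241}: the condition on $x\pm1$ is symmetric, the condition on $a\pm1$ now refers to $\epsilon_{p_1q}$, and $p_j(x\pm1)$ becomes $p_{3-j}(x\pm1)$; at the level of generators $\langle[\h],[\hh],[\hhh\hhhh]\rangle$, $\langle[\h]\rangle$, $\langle[\h],[\hhh\hhhh]\rangle$, $\langle[\h],[\hh]\rangle$ turn into $\langle[\hhh],[\hhhh],[\h\hh]\rangle$, $\langle[\hhh]\rangle$, $\langle[\hhh],[\h\hh]\rangle$, $\langle[\hhh],[\hhhh]\rangle$, exactly as listed.

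Concretely, I would first record the unconditional input. Because $p_2\mid d=p_1p_2q$ and $p_2=\pi_3\pi_4$ splits in $\QQ(i)$, Proposition \ref{9} applies verbatim to $\KK_2=\kk(\sqrt{p_2})$ and shows that both $\hhh$ and $\hhhh$ capitulate there, so $\langle[\hhh],[\hhhh]\rangle\subseteq\kappa_{\KK_2}$ always. The order $|\kappa_{\KK_2}|\in\{2,4,8\}$ in each case is then read off from Theorem \ref{226}(2), itself obtained from the capitulation-number formula of Theorem \ref{1} via the fundamental system of units of $\KK_2$ computed in Proposition \ref{28}. It remains to produce generators matching this order. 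The class $[\h\hh]$ enters exactly as $[\hhh\hhhh]$ did in Theorem \ref{239}: when neither $a+1$ nor $a-1$ is a square in $\NN$, the analysis of Proposition \ref{28} (the mirror of Proposition \ref{27}) forces $p_1\epsilon_{p_1q}$ or $2p_1\epsilon_{p_1q}$ to be a square in $\KK_2$, so there is $\alpha\in\KK_2$ with $(\alpha^2)=(p_1)=(\h\hh)^2$, or with $\left(\alpha/(1+i)\right)^2=(p_1)=(\h\hh)^2$, whence $\h\hh$ capitulates in $\KK_2$. In the opposite direction, when $p_2(x\pm1)$ or $2p_2(x\pm1)$ is a square in $\NN$, then since $(\hhh\hhhh)^2=(p_2)$, Proposition~2 of \cite{AZT12-2} gives that $\hhh\hhhh$ is principal in $\kk$, i.e. $[\hhh]=[\hhhh]$, collapsing the two generators into one. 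Matching these facts to the order from Theorem \ref{226}(2) in each of the four cases reproduces the four kernels, the independence of the stated generators in $\mathbf{C}l_2(\kk)$ being guaranteed by the non-principality relations of Lemma \ref{7} and of the proof of Theorem \ref{239}, all symmetric in $p_1,p_2$.

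The hard part is not any single computation but checking that the interchange $p_1\leftrightarrow p_2$ is a genuine symmetry of every ingredient of the proof: the fundamental systems of units (Proposition \ref{27} versus Proposition \ref{28}), the unit-index count entering Theorem \ref{1}, and above all the non-principality of the various products of the $\mathcal{H}_j$ in $\kk$ established in Lemma \ref{7}. Once one verifies that each of these is invariant under the interchange, so that the arithmetic conditions and the four capitulation kernels correspond term by term, the transport from Theorem \ref{239} is automatic and, together with the direct application of Proposition \ref{9} and the explicit construction of $\alpha$ above, yields the four cases as stated.
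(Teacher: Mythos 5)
Your proposal is correct and follows exactly the paper's own route: the paper proves Theorem \ref{241} by the one-line observation that $p_1$ and $p_2$ play symmetric roles, so the result is transported from Theorem \ref{239} (with Proposition \ref{28} mirroring Proposition \ref{27} and Proposition \ref{9} giving the capitulation of $\hhh$ and $\hhhh$ in $\KK_2$). Your write-up simply makes explicit the verification of that symmetry which the paper leaves implicit.
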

\subsection{Capitulation in $\KK_3$}
Finally, we study the capitulation of the $2$-ideal classes of  $\kk$ in  $\KK_3=\kk(\sqrt{q})=\kk(\sqrt{p_1p_2})$.
\begin{The}\label{243}
Keep the notations and hypotheses previously mentioned   and assume  $N(\epsilon_{p_1p_2})=1$.
\begin{enumerate}[\rm\indent1.]
  \item If $x\pm1$ is a square in $\NN$ , then $\kappa_{\KK_3}=\langle[\h\hh], [\hhh\hhhh]\rangle$.
  \item If  $ p_1(x\pm1)$ or $2p_1(x\pm1)$ is a square in $\NN$, then $\kappa_{\KK_3}=\langle[\hhh\hhhh]\rangle$.
    \item If  $p_2(x\pm1)$ or $2p_2(x\pm1)$  is a square in $\NN$, then $\kappa_{\KK_3}=\langle[\h\hh]\rangle$.
  \item If $q(x\pm1)$ or $2q(x\pm1)$  is a square in $\NN$, then $\kappa_{\KK_3}=\langle[\h\hh]\rangle=\langle[\hhh\hhhh]\rangle$.
\end{enumerate}
\end{The}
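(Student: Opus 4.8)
The plan is to prove the theorem in two movements: first show that the two ideals $\h\hh$ and $\hhh\hhhh$ always capitulate in $\KK_3$ under the running hypothesis $N(\epsilon_{p_1p_2})=1$, so that $\langle[\h\hh],[\hhh\hhhh]\rangle\subseteq\kappa_{\KK_3}$; then pin down the order and internal structure of this subgroup in each of the four cases and match it against the cardinality $|\kappa_{\KK_3}|$ already computed in Theorem \ref{226}, so that the inclusion becomes an equality. Throughout I use that $(\h\hh)^2=(p_1)$ and $(\hhh\hhhh)^2=(p_2)$ are principal in $\kk$, whence $[\h\hh]$ and $[\hhh\hhhh]$ each have order dividing $2$ and the subgroup they generate is elementary abelian of order $1$, $2$ or $4$.

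For the capitulation step I would invoke the unit computation made inside the proof of Proposition \ref{29}: from $a^2-1=b^2p_1p_2$ together with Lemmas \ref{5} and \ref{3:105} one gets that $p_1\epsilon_{p_1p_2}$ and $p_2\epsilon_{p_1p_2}$ are squares in $\KK_3^+$, independently of the behaviour of $\epsilon_{p_1p_2q}$. Writing $p_1\epsilon_{p_1p_2}=\gamma^2$ and $p_2\epsilon_{p_1p_2}=\delta^2$ with $\gamma,\delta\in\KK_3^+\subset\KK_3$, and using that $\epsilon_{p_1p_2}$ is a unit, one obtains $(\gamma)^2=(p_1)=(\h\hh)^2$ and $(\delta)^2=(p_2)=(\hhh\hhhh)^2$ as ideals of $\KK_3$. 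Since the group of fractional ideals of $\KK_3$ is free abelian, hence torsion free, these equalities force $(\gamma)=\h\hh\OO_{\KK_3}$ and $(\delta)=\hhh\hhhh\OO_{\KK_3}$; thus both $\h\hh$ and $\hhh\hhhh$ become principal in $\KK_3$, i.e. they capitulate. This is the exact analogue of the device already used in the proof of Theorem \ref{239}, and it shows at once that the capitulation is uniform across all four cases.

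It then remains to read off the structure of $\langle[\h\hh],[\hhh\hhhh]\rangle$ case by case, using Lemma \ref{7}, the relation $[\h\hh\hhh\hhhh]=[\Q]$, and the principality criterion of \cite[Proposition 2]{AZT12-2}. If $x\pm1$ is a square then, arguing as in Lemma \ref{7}(1), $\h\hh$ and $\hhh\hhhh$ are non-principal and independent in $\mathbf{C}l_2(\kk)$, so the subgroup has order $4$, matching $|\kappa_{\KK_3}|=4$ from Theorem \ref{226}; this gives $\kappa_{\KK_3}=\langle[\h\hh],[\hhh\hhhh]\rangle$. If $p_1(x\pm1)$ or $2p_1(x\pm1)$ is a square, then \cite[Proposition 2]{AZT12-2} applied to $(\h\hh)^2=(p_1)$ yields $[\h]=[\hh]$, hence $[\h\hh]=1$, while the same criterion applied to $(\hhh\hhhh)^2=(p_2)$ shows $\hhh\hhhh$ is non-principal (the competing condition $p_2(x\pm1)$ a square being excluded by decomposition uniqueness); thus $\kappa_{\KK_3}=\langle[\hhh\hhhh]\rangle$ of order $2$. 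The case $p_2(x\pm1)$ or $2p_2(x\pm1)$ a square is symmetric and gives $\kappa_{\KK_3}=\langle[\h\hh]\rangle$. Finally, if $q(x\pm1)$ or $2q(x\pm1)$ is a square, then $\Q$ is principal, so $[\h\hh][\hhh\hhhh]=[\Q]=1$ forces $[\h\hh]=[\hhh\hhhh]$ (both of order $2$), whence $\kappa_{\KK_3}=\langle[\h\hh]\rangle=\langle[\hhh\hhhh]\rangle$. In every case the order of $\langle[\h\hh],[\hhh\hhhh]\rangle$ equals $|\kappa_{\KK_3}|$ from Theorem \ref{226}, so the inclusion of the first paragraph is an equality.

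The genuinely delicate point is not the capitulation, which is immediate and uniform once Proposition \ref{29} is in hand, but the class-group bookkeeping: establishing the precise relations $[\h]=[\hh]$, $[\Q]=1$ and the various non-principality statements in each regime, all of which rest on the principality criterion of \cite[Proposition 2]{AZT12-2} and on decomposition uniqueness in $\ZZ$ to guarantee that the defining congruence conditions are mutually exclusive. Once these relations are secured, matching the order against Theorem \ref{226} closes every case.
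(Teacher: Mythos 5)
Your proof is correct and takes essentially the same route as the paper's: capitulation of $\h\hh$ and $\hhh\hhhh$ via the squares $p_1\epsilon_{p_1p_2}$, $p_2\epsilon_{p_1p_2}$ extracted in the proof of Proposition \ref{29}, followed by the principality bookkeeping from \cite[Proposition 2 and Remark 1]{AZT12-2} in each of the four regimes. Your explicit cardinality match against Theorem \ref{226} to upgrade the inclusion to an equality is left implicit in the paper but is exactly the intended closing step.
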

\begin{proof}
According to  the previous theorems   $\h$, $\hh$, $\hhh$ and $\hhhh$ are not principal in $\kk$. On the other hand,  from the proof of  Proposition \ref{29} we know that $2p_1\epsilon_{p_1p_2}$ and $2p_2\epsilon_{p_1p_2}$ or $p_1\epsilon_{p_1p_2}$ and  $p_2\epsilon_{p_1p_2}$ are squares in $\KK_3$. Thus there exist  $\alpha$, $\beta$ in $\KK_3$ such that $\h\hh=(\frac{\alpha}{1+i})$ and $\hhh\hhhh=(\frac{\beta}{1+i})$ or $\h\hh=(\alpha)$ and $\hhh\hhhh=(\beta)$, which implies that  $\h\hh$ and $\hhh\hhhh$ capitulate in $\KK_3$. We have four cases to distinguish.
\begin{enumerate}[\rm\indent1.]
 \item Suppose  $x\pm1$ is a square in $\NN$.  Since   $(\h\hh)^2=(p_1)$, $(\hhh\hhhh)^2=(p_2)$ and $(\h\hh\hhh\hhhh)^2=(p_1p_2)$, then  Proposition 2 and Remark  1 of  \cite{AZT12-2} yield that $\h\hh$, $\hhh\hhhh$ and $\h\hh\hhh\hhhh$ are not principal in  $\kk$; hence $\kappa_{\KK_3}=\langle[\h\hh], [\hhh\hhhh]\rangle$.
 \item  If $p_1(x\pm1)$ or $2p_1(x\pm1)$ is a square in  $\NN$, then \cite[Proposition 2]{AZT12-2} yields that  $[\h\hh]=1$; so the result.
 \item If  $p_2(x\pm1)$ or $2p_2(x\pm1)$  is a square in  $\NN$, then \cite[Proposition 2]{AZT12-2} yields that $[\hhh\hhhh]=1$; so the result.
 \item  If  $q(x\pm1)$ or $2q(x\pm1)$  is a square in  $\NN$, then \cite[Remark 1]{AZT12-2} yields that  $\h\hh\hhh\hhhh$ is principal in  $\kk$, so the result.
\end{enumerate}
\end{proof}
\begin{The}\label{245}
Keep the notations and hypotheses previously mentioned   and assume  $N(\epsilon_{p_1p_2})=-1$.
\begin{enumerate}[\rm\indent1.]
  \item If  $q(x\pm1)$ or $2q(x\pm1)$  is a square in $\NN$, then $\kappa_{\KK_3}=\langle[\h\hhh]\rangle$ or $\langle[\h\hhhh]\rangle$.
  \item If  $x\pm1$  is a square in $\NN$, then  $\kappa_{\KK_3}=\langle[\h\hhh], [\hh\hhhh]\rangle$ or $\langle[\h\hhhh], [\hh\hhh]\rangle$.
  \item If $p_1(x\pm1)$ or $2p_1(x\pm1)$ is a square in $\NN$, then  $\kappa_{\KK_3}=\langle[\h\hhh], [\h\hhhh]\rangle$.
  \item If $p_2(x\pm1)$ or $2p_2(x\pm1)$ is a square in $\NN$, then  $\kappa_{\KK_3}=\langle[\h\hhh], [\hh\hhh]\rangle$.
\end{enumerate}
\end{The}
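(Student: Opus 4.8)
The plan is to follow the proof of Theorem~\ref{243} line for line, trading the norm~$+1$ computation used there for the norm~$-1$ factorization that drives Proposition~\ref{9}. First I would record, exactly as in Lemma~\ref{7}, that $\h,\hh,\hhh,\hhhh$ are non-principal in $\kk$, and observe that $\sqrt{p_1p_2}\in\KK_3$ forces $(\sqrt{p_1p_2})=\h\hh\hhh\hhhh$ in $\KK_3$ (both ideals have square $(p_1p_2)$ and the ideal group is torsion-free). Hence $\h\hh\hhh\hhhh$ capitulates, so that $[\h\hhh]=[\hh\hhhh]$ and $[\h\hhhh]=[\hh\hhh]$ in $\mathbf{C}l_2(\KK_3)$; this relation is the backbone linking all four cases.

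The genuinely new input is a Proposition~\ref{9}--type factorization of $\epsilon_{p_1p_2}$. Since $N(\epsilon_{p_1p_2})=-1$, exactly as in Proposition~\ref{9} the relation $a^2+4=p_1p_2b^2$ (or $a^2+1=p_1p_2b^2$ in the integral case) gives $(a+2i)(a-2i)=\pi_1\pi_2\pi_3\pi_4\,b^2$ in $\ZZ[i]$; since the two conjugate factors are coprime up to $1+i$, each $\pi_\ell$ divides exactly one of them, so that $a+2i=u\,\pi_i\pi_j\,y_1^2$ for some unit $u$, with $i\in\{1,2\}$ and $j\in\{3,4\}$. Copying the computation of Proposition~\ref{9} then produces $\alpha\in\KK_3$ with $\alpha^2=\pi_i\pi_j\,\epsilon_{p_1p_2}$, whence $(\alpha)=\mathcal{H}_i\mathcal{H}_j$ and the cross-term $\mathcal{H}_i\mathcal{H}_j$ capitulates in $\KK_3$. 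This is exactly why, in contrast with the $N=+1$ case of Theorem~\ref{243}, the capitulating classes are the cross products $\h\hhh,\h\hhhh,\hh\hhh,\hh\hhhh$ rather than $\h\hh,\hhh\hhhh$, and the admissible index pair $\{i,j\}$ accounts for the ``or'' in the statement.

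With the backbone relation and one capitulating cross-term in hand, I would fix $|\kappa_{\KK_3}|$ from Theorem~\ref{226} (case~3.ii) --- it is $2$ when $q(x\pm1)$ or $2q(x\pm1)$ is a square in $\NN$ and $4$ otherwise, in agreement with Theorem~\ref{1} --- and then pin down the generators. If $q(x\pm1)$ or $2q(x\pm1)$ is a square, Lemma~\ref{7}(2) together with $[\h\hh\hhh\hhhh]=[\mathcal{Q}]$ shows $\h\hh\hhh\hhhh$ is already principal in $\kk$, so $[\h\hhh]=[\hh\hhhh]$ and the single cross-term generates $\kappa_{\KK_3}=\langle[\h\hhh]\rangle$ or $\langle[\h\hhhh]\rangle$. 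In each order-$4$ case the backbone relation converts the cross-term supplied by the factorization into a second capitulating cross-term, and the case hypothesis supplies the required principality in $\kk$: when $x\pm1$ is a square Lemma~\ref{7}(1) keeps the four $\mathcal{H}_\ell$ independent and yields $\langle[\h\hhh],[\hh\hhhh]\rangle$ or $\langle[\h\hhhh],[\hh\hhh]\rangle$; when $p_1(x\pm1)$ or $2p_1(x\pm1)$ is a square then $[\h]=[\hh]$ (as in Theorem~\ref{239}), giving $\langle[\h\hhh],[\h\hhhh]\rangle$; and symmetrically $p_2(x\pm1)$ or $2p_2(x\pm1)$ a square gives $[\hhh]=[\hhhh]$ and $\langle[\h\hhh],[\hh\hhh]\rangle$. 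That each such group genuinely has order $4$ is read off from the non-principality criteria of \cite{AZT12-2}, precisely as in Lemma~\ref{7}.

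The step I expect to be the main obstacle is determining which index pair $\{i,j\}$ actually occurs in the factorization and checking that the two selected generators are independent of exactly the order prescribed by Theorem~\ref{226}. The factorization only guarantees that some $\mathcal{H}_i\mathcal{H}_j$ with $i\in\{1,2\}$, $j\in\{3,4\}$ becomes principal; separating the admissible pairs, while simultaneously verifying that none of the chosen cross-terms is already trivial in $\kk$ outside the order-$2$ case, is the delicate bookkeeping. Everything else reduces to the residue and decomposition-uniqueness computations already performed in Proposition~\ref{27} and Lemma~\ref{7}.
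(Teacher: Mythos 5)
Your proposal is correct and follows essentially the same route as the paper: factor $a\pm i$ (resp.\ $a\pm 2i$) in $\ZZ[i]$ into products $\pi_i\pi_j$ with $i\in\{1,2\}$, $j\in\{3,4\}$, deduce that $\pi_i\pi_j\epsilon_{p_1p_2}$ or $2\pi_i\pi_j\epsilon_{p_1p_2}$ is a square in $\KK_3$ so that the cross-terms $\mathcal{H}_i\mathcal{H}_j$ capitulate, and then identify $\kappa_{\KK_3}$ by matching the order given in Theorem \ref{226} against the principality criteria of \cite{AZT12-2}. The only cosmetic difference is that you obtain the second capitulating cross-term from the relation $\h\hh\hhh\hhhh\mathcal{O}_{\KK_3}=(\sqrt{p_1p_2})$, whereas the paper reads it off directly from the conjugate factor $a\mp i$; the two are interchangeable.
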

\begin{proof}
As $N(\epsilon_{p_1p_2})=-1$, then by  the decomposition uniqueness in $\ZZ[i]$, there exist  $b_1$ and $b_2$ in $\ZZ[i]$ such that
 $$ \left\{\begin{array}{ll}
a\pm i &=b_1^2\pi_1\pi_3,\\
a\mp i &=b_2^2\pi_2\pi_4,
\end{array}\right. \text{ or }
  \left\{\begin{array}{ll}
a\pm i &=ib_1^2\pi_1\pi_3,\\
a\mp i &=-ib_2^2\pi_2\pi_4,
\end{array}\right. \text{ or }$$
 $$\left\{\begin{array}{ll}
a\pm i &=b_1^2\pi_1\pi_4,\\
a\mp i &=b_2^2\pi_2\pi_3,
\end{array}\right. \text{ or }
  \left\{\begin{array}{ll}
a\pm i &=ib_1^2\pi_1\pi_4,\\
a\mp i &=-ib_2^2\pi_2\pi3,
\end{array}\right.$$  with $p_1=\pi_1\pi_2$, $p_2=\pi_3\pi_4$, $y=y_1y_2$ and $\pi_2$ (resp. $\pi_4$, $y_2$) is the complex conjugate of $\pi_1$ (resp. $\pi_3$, $y_1$). Therefore $\sqrt{2\epsilon_{p_1p_2}}=b_1\sqrt{\pi_1\pi_3}+b_2\sqrt{\pi_2\pi_4}$ or $\sqrt{\epsilon_{p_1p_2}}=b_1(1+i)\sqrt{\pi_1\pi_3}+b_2(1-i)\sqrt{\pi_2\pi_4}$ or $\sqrt{2\epsilon_{p_1p_2}}=b_1\sqrt{\pi_1\pi_4}+b_2\sqrt{\pi_2\pi_3}$ or $\sqrt{\epsilon_{p_1p_2}}=b_1(1+i)\sqrt{\pi_1\pi_4}+b_2(1-i)\sqrt{\pi_2\pi_3}$, hence $2\pi_1\pi_3\epsilon_{p_1p_2}$ and  $2\pi_2\pi_4\epsilon_{p_1p_2}$ or  $\pi_1\pi_3\epsilon_{p_1p_2}$ and $\pi_2\pi_4\epsilon_{p_1p_2}$ or $2\pi_1\pi_4\epsilon_{p_1p_2}$ and  $2\pi_2\pi_3\epsilon_{p_1p_2}$ or  $\pi_1\pi_4\epsilon_{p_1p_2}$ and $\pi_2\pi_3\epsilon_{p_1p_2}$ are squares in $\KK_3$. Thus there exist    $\alpha$, $\beta$  in $\KK_3$  such that
 $(\alpha^2)= (2\pi_1\pi_3)$ and $(\beta^2)=(2\pi_2\pi_4)$ or  $(\alpha^2)= (\pi_1\pi_3)$ and $(\beta^2)=(\pi_2\pi_4)$ or  $(\alpha^2)= (2\pi_1\pi_4)$ and $(\beta^2)=(2\pi_2\pi_3)$ or $(\alpha^2)= (\pi_1\pi_4)$ and $(\beta^2)=(\pi_2\pi_3)$. Consequently $\h\hhh=(\frac{\alpha}{1+i})$ and $\hh\hhhh=(\frac{\beta}{1+i})$ or  $\h\hhh=(\alpha)$ and $\hh\hhhh=(\beta)$ or $\h\hhhh=(\frac{\alpha}{1+i})$ and $\hh\hhh=(\frac{\beta}{1+i})$ or  $\h\hhhh=(\alpha)$ and $\hh\hhh=(\beta)$. Thus $\h\hhh$ and $\hh\hhhh$ or $\h\hhhh$ and $\hh\hhh$ capitulate in $\KK_3$. On the other hand, $\h\hhh$ is not principal in  $\kk$, since $(\h\hhh)^2=(\pi_1\pi_3)=(m+in)$ and $\sqrt{m^2+n^2}=\sqrt{p_1p_2}\not\in\kk$. Thus \cite[Proposition 1]{AZT12-2} guarantees the result. We similarly show that $\h\hhhh$, $\hh\hhhh$ and $\hh\hhh$ are not principal in $\kk$.

1. If  $q(x\pm1)$ or $2q(x\pm1)$  is  a square in $\NN$, then $p_1p_2(x\pm1)$ or $2p_1p_2(x\pm1)$  is a square in $\NN$; so \cite[Remark 1]{AZT12-2} yields that $\h\hh\hhh\hhhh$ is principal in $\kk$, thus $\kappa_{\KK_3}=\langle[\h\hhh]\rangle$ or $\langle[\h\hhhh]\rangle$.

2., 3. and 4. are similarly shown.
\end{proof}

From Theorems \ref{239},   \ref{241}, \ref{243},  \ref{245} and from Proposition \ref{248}, we deduce the following result.
\begin{The}\label{10}
Let  $p_1\equiv p_2\equiv-q\equiv1\pmod4$ be different primes. Put $\kk=\QQ(\sqrt{p_1p_2q}, i)$ and denote by $\G$ its genus field. Let  $\epsilon_{p_1p_2q}=x+y\sqrt{p_1p_2q}$ be the fundamental unit of   $\QQ(\sqrt{p_1p_2q})$.
\begin{enumerate}[\rm\indent1.]
  \item If $x\pm1$ is a square in $\NN$, then $\langle\h, \hh, \hhh, \hhhh\rangle\subseteq \kappa_{\G}$.
  \item If $p_1(x\pm1)$ or $2p_1(x\pm1)$ is a square in $\NN$, then $\langle\h, \hhh, \hhhh\rangle\subseteq \kappa_{\G}$.
  \item If $p_2(x\pm1)$ or $2p_2(x\pm1)$ is a square in $\NN$, then $\langle\h, \hh, \hhh\rangle\subseteq \kappa_{\G}$.
  \item If $q(x\pm1)$ or $2q(x\pm1)$ is a square in $\NN$, then
   $\langle\h, \hh, \hhh\rangle=\langle\h, \hh, \hhhh\rangle\subseteq \kappa_{\G}$.
\end{enumerate}
\end{The}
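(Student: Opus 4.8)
The plan is to deduce everything from the three capitulation computations already in hand, namely Theorems~\ref{239}, \ref{241}, \ref{243} and~\ref{245}, together with the single soft fact that $\KK_1,\KK_2,\KK_3$ are exactly the quadratic subextensions of $\G=\kk^{(*)}$. First I would record that capitulation is monotone: if an ideal of $\kk$ becomes principal in some $\KK_j$, it stays principal in the larger field $\G\supseteq\KK_j$, so $\kappa_{\KK_j}\subseteq\kappa_{\G}$ for $j=1,2,3$ and hence
$$\langle\kappa_{\KK_1},\kappa_{\KK_2},\kappa_{\KK_3}\rangle\subseteq\kappa_{\G}.$$
The whole theorem then amounts to checking that, in each of its four cases, the kernels $\kappa_{\KK_j}$ already contain enough single-ideal classes to generate the asserted subgroup.

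The organizing observation is that the four hypotheses are mutually exclusive and exhaustive: from $x^2-1=p_1p_2qy^2$ and the case analysis in the proof of Proposition~\ref{27}, exactly one of the conditions ``$x\pm1$ a square'', ``$p_1(x\pm1)$ or $2p_1(x\pm1)$ a square'', ``$p_2(x\pm1)$ or $2p_2(x\pm1)$ a square'', ``$q(x\pm1)$ or $2q(x\pm1)$ a square'' holds. This pins down which branch of Theorem~\ref{239} controls $\kappa_{\KK_1}$ and which branch of Theorem~\ref{241} controls $\kappa_{\KK_2}$. Reading off the generators that survive in \emph{every} sub-branch --- that is, regardless of whether $a\pm1$ is a square for the relevant $\epsilon_{p_iq}$ and regardless of the sign of $N(\epsilon_{p_1p_2})$ --- I find that $[\h]\in\kappa_{\KK_1}$ always, with $[\hh]\in\kappa_{\KK_1}$ as well unless $p_1(x\pm1)$ or $2p_1(x\pm1)$ is a square; and symmetrically $[\hhh]\in\kappa_{\KK_2}$ always, with $[\hhhh]\in\kappa_{\KK_2}$ unless $p_2(x\pm1)$ or $2p_2(x\pm1)$ is a square.

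Granting this, the four cases fall out using only $\KK_1$ and $\KK_2$. If $x\pm1$ is a square, then $\KK_1$ contributes $[\h],[\hh]$ and $\KK_2$ contributes $[\hhh],[\hhhh]$, giving $\langle[\h],[\hh],[\hhh],[\hhhh]\rangle\subseteq\kappa_{\G}$. If $p_1(x\pm1)$ or $2p_1(x\pm1)$ is a square, then $\KK_1$ only guarantees $[\h]$ while $\KK_2$ (now in its generic branch) still gives $[\hhh],[\hhhh]$, so $\langle[\h],[\hhh],[\hhhh]\rangle\subseteq\kappa_{\G}$; the $p_2$ case is symmetric. If $q(x\pm1)$ or $2q(x\pm1)$ is a square, both $\KK_1$ and $\KK_2$ are generic, so I again recover $[\h],[\hh],[\hhh],[\hhhh]$; to finish this case I invoke the relation $[\h\hh\hhh\hhhh]=[\Q]=1$ from (the proof of) Lemma~\ref{7}(2), which gives $[\hhhh]\in\langle[\h],[\hh],[\hhh]\rangle$ and therefore both the containment and the identity $\langle[\h],[\hh],[\hhh]\rangle=\langle[\h],[\hh],[\hhhh]\rangle$.

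The only place where genuine care is needed --- and the step I expect to be the main obstacle --- is the book-keeping over the auxiliary sub-branches of Theorems~\ref{239} and~\ref{241}. I must verify that the classes I claim are ``always present'' really do appear as \emph{single} generators $[\h],[\hh],[\hhh],[\hhhh]$ in each branch, and not merely inside the composite classes $[\hhh\hhhh]$ or $[\h\hh]$ that also occur there; this is precisely why the argument keeps only the single-ideal generators and discards the composite ones. Once that verification is made against the four branches of each theorem, monotonicity of capitulation supplies the rest, and notably $\KK_3$ is never needed for any of the four containments.
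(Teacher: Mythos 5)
Your proposal is correct and follows essentially the same route as the paper, which presents Theorem \ref{10} as a direct deduction from the capitulation kernels computed in Theorems \ref{239}--\ref{245} together with the inclusion $\kappa_{\KK_j}\subseteq\kappa_{\G}$; your use of the relation $[\h\hh\hhh\hhhh]=[\Q]=1$ in the fourth case matches the mechanism in Lemma \ref{7}. The only (harmless) divergence is your observation that $\KK_3$ is dispensable, whereas the paper also cites Theorems \ref{243} and \ref{245}; note too that Proposition \ref{9} already gives $[\h],[\hh]\in\kappa_{\KK_1}$ and $[\hhh],[\hhhh]\in\kappa_{\KK_2}$ unconditionally, which would let you skip the sub-branch book-keeping you flag as the delicate step.
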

 Theorem \ref{10} implies the following corollary:
\begin{coro}\label{19}
Let $\kk=\QQ(\sqrt{p_1p_2q},i)$, where $p_1\equiv p_2\equiv-q\equiv1 \pmod 4$ are different primes.   Let $\G$ be the genus field of $\kk$ and  $\mathrm{A}m_s(\kk/\QQ(i))$ be the group of  the strongly ambiguous class  of $\kk/\QQ(i)$, then $\mathrm{A}m_s(\kk/\QQ(i))\subseteq \kappa_{\k}$.
\end{coro}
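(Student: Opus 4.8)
The plan is to read off the result by comparing, case by case, the explicit generators of $\mathrm{A}m_s(\kk/\QQ(i))$ produced in Proposition \ref{248} with the subgroups shown to capitulate in $\G=\k=\kk^{(*)}$ by Theorem \ref{10}. The guiding remark is that each of the three fields $\KK_1,\KK_2,\KK_3$ lies inside the genus field $\kk^{(*)}$, so any class capitulating in some $\KK_j$ capitulates a fortiori in $\kk^{(*)}$; this is precisely what Theorem \ref{10} records, and it is the only input about capitulation I will need. Since $\G$ and $\k$ are the same field, $\kappa_{\G}=\kappa_{\k}$ throughout.

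First I would recall, from the computation inside the proof of Lemma \ref{7}, the relation $[\h\hh\hhh\hhhh]=[\Q]$, so that $\mathrm{A}m_s(\kk/\QQ(i))$ is generated by the four classes $[\h],[\hh],[\hhh],[\hhhh]$ alone. Next, since $p_1p_2q\equiv 3\pmod 4$ and $q\equiv 3\pmod 4$ divides $p_1p_2q$ to an odd power, we have $N(\epsilon_{p_1p_2q})=1$, whence $x^2-1=p_1p_2q\,y^2$; as $\gcd(x-1,x+1)\mid 2$, each of the odd primes $p_1,p_2,q$ divides exactly one of $x\pm1$, so the three primes split between the two factors in exactly one of four ways. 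Together with Lemma \ref{5} these four partitions correspond precisely to the four mutually exclusive and exhaustive alternatives of Theorem \ref{10}: $x\pm1$ a square; $p_1(x\pm1)$ or $2p_1(x\pm1)$ a square; $p_2(x\pm1)$ or $2p_2(x\pm1)$ a square; $q(x\pm1)$ or $2q(x\pm1)$ a square.

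Then I would run through the four cases. If $x\pm1$ is a square, then $p_1\equiv p_2\equiv1\pmod 8$ by Proposition \ref{248}(3), and Proposition \ref{248}(2.i) gives $\mathrm{A}m_s=\langle[\h],[\hh],[\hhh],[\hhhh]\rangle$, which is contained in $\kappa_{\G}$ by Theorem \ref{10}(1). In each of the remaining three cases $x\pm1$ is not a square, so $|\mathrm{A}m_s|=8$ and Proposition \ref{248} presents $\mathrm{A}m_s$ as one of $\langle[\h],[\hh],[\hhh]\rangle$ or $\langle[\h],[\hhh],[\hhhh]\rangle$. The only thing to verify is that the option actually realized is the one that Theorem \ref{10} shows to capitulate, and this is forced by an order count. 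Indeed, when $p_1(x\pm1)$ or $2p_1(x\pm1)$ is a square one has $[\h]=[\hh]$, so $\langle[\h],[\hhh],[\hhhh]\rangle$ is the subgroup of order $8$ by Lemma \ref{7}(3); being contained in the order-$8$ group $\mathrm{A}m_s$, it equals it, and Theorem \ref{10}(2) finishes the case. Symmetrically, when $p_2(x\pm1)$ is a square one has $[\hhh]=[\hhhh]$ and $\mathrm{A}m_s=\langle[\h],[\hh],[\hhh]\rangle\subseteq\kappa_{\G}$ by Theorem \ref{10}(3); and when $q(x\pm1)$ is a square one has $[\hhhh]=[\h\hh\hhh]$, so $\langle[\h],[\hh],[\hhh]\rangle=\langle[\h],[\hh],[\hhhh]\rangle=\mathrm{A}m_s\subseteq\kappa_{\G}$ by Theorem \ref{10}(4).

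There is no genuinely hard step: the arithmetic has already been carried out in Propositions \ref{27}--\ref{36}, Lemma \ref{7}, Proposition \ref{248}, and Theorem \ref{10}. The one point deserving care — and what I would treat as the crux of the write-up — is the bookkeeping that pins down, in each of the three cases with $|\mathrm{A}m_s|=8$, which of the two candidate generating sets from Proposition \ref{248} actually occurs. This is settled uniformly by combining the relevant class relation ($[\h]=[\hh]$, $[\hhh]=[\hhhh]$, or $[\h\hh\hhh\hhhh]=1$) with the order computations of Lemma \ref{7}, after which the containment $\mathrm{A}m_s(\kk/\QQ(i))\subseteq\kappa_{\k}$ is immediate from Theorem \ref{10}.
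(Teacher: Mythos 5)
Your proposal is correct and follows the same route as the paper, which simply asserts that the corollary is an immediate consequence of Theorem \ref{10} (together with the generators of $\mathrm{A}m_s(\kk/\QQ(i))$ from Proposition \ref{248}); you have merely written out the case-by-case matching that the paper leaves implicit. The extra bookkeeping you supply --- checking that the four alternatives on $x\pm1$ are exhaustive and using the relations $[\h]=[\hh]$, $[\hhh]=[\hhhh]$, $[\h\hh\hhh\hhhh]=1$ to pin down which order-$8$ generating set from Proposition \ref{248} actually occurs --- is accurate and is exactly the verification the paper's one-line deduction relies on.
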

\section{\bf{Application}}
Let $p_1\equiv p_2\equiv-q\equiv1 \pmod 4$ be different primes such that  $\mathbf{C}l_2(\mathds{\kk})$ is of type $(2, 2, 2)$.
According to  \cite{AzTa-08}, $\mathbf{C}l_2(\kk)$  is of type $(2, 2, 2)$ if and only if  $p_1$, $p_2$ and $q$  satisfy the following conditions:
 \begin{enumerate}[\rm\indent -]
\item  $p_1\equiv5$ or $p_2\equiv5\pmod8$.
\item  Two at least of the elements of  $\left\{\left(\frac{p_1}{p_2}\right), \left(\frac{p_1}{q}\right), \left(\frac{p_2}{q}\right)\right\}$ are equal to $-1$.
  \end{enumerate}
  These conditions are detailed in three types $I$, $II$ and $III$ and each type consists of three cases $(a)$, $(b)$ and $(c)$ (see \cite{AZT12-2}).
To continue we need the following results.
\begin{lem}\label{11}
Let $p_1\equiv p_2\equiv-q\equiv1 \pmod 4$ be different primes as above.
\begin{enumerate}[\upshape\indent1.]
  \item If  $p_1$, $p_2$ and $q$ are of  type I,  then  $p_1(x\pm1)$ is a square in $\NN$.
  \item If $p_1$, $p_2$ and $q$ are of  type II,  then  $p_2(x\pm1)$ is a square in $\NN$.
  \item If $p_1$, $p_2$ and $q$ are of  type III, then  $q(x\pm1)$ i.e. $p_1p_2(x\mp1)$ is a square in $\NN$.
\end{enumerate}
\end{lem}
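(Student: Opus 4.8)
The plan is to read off the isolated prime in the factorization of $x^2-1$ coming from the fundamental unit, and then match it to the type via quadratic residue symbols. First I would record that $N(\epsilon_{p_1p_2q})=1$: since $q\equiv3\pmod4$ divides $p_1p_2q$, the congruence $X^2\equiv-1\pmod q$ is insolvable, so $X^2-p_1p_2qY^2=-1$ has no solution and the fundamental unit has norm $+1$. As $p_1p_2q\equiv3\pmod4$, its ring of integers is $\ZZ[\sqrt{p_1p_2q}]$, so $x,y\in\ZZ$ and
\begin{equation*}
(x-1)(x+1)=p_1p_2q\,y^2,\qquad \gcd(x-1,x+1)\mid 2 .
\end{equation*}
Because $p_1,p_2,q$ are odd, each divides exactly one of $x\pm1$, so the squarefree kernels of $x-1$ and $x+1$ partition $\{p_1,p_2,q\}$ into a singleton and a pair. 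Note also that $x$ is even iff $y$ is odd: reducing the displayed relation mod $4$ and using $p_1p_2q\equiv3\pmod4$ gives this at once.

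Next I would exclude the trivial partition (no isolated prime). The hypothesis that $\mathbf{C}l_2(\kk)$ is of type $(2,2,2)$ forces $p_1\equiv5$ or $p_2\equiv5\pmod8$; hence by Proposition~\ref{248}(3) neither $x+1$ nor $x-1$ is a square, while Lemma~\ref{5} rules out $2(x\pm1)$ and $2p_1p_2q(x\pm1)$ being squares. Thus the partition is genuinely of singleton type and exactly one of $p_1,p_2,q$ is isolated, giving exactly one of: $p_1$ isolated, $p_2$ isolated, or $q$ isolated. The last is equivalent to $p_1p_2(x\mp1)$ being a square, since $q(x\pm1)\cdot p_1p_2(x\mp1)=(p_1p_2q\,y)^2$, so one of the two is a square iff the other is, which accounts for the parenthetical in the statement.

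To decide which prime is isolated I would translate each candidate partition into Legendre-symbol conditions and compare with the defining symbols of the three types in \cite{AZT12-2}. For example, if $q$ were isolated, writing $x-1=q u^2$ and $x+1=p_1p_2v^2$ and reducing modulo $p_1,p_2,q$ (using $p_i\nmid u$, which holds because $\gcd(x-1,x+1)\mid2$) yields
\begin{equation*}
\left(\frac{-2q}{p_1}\right)=\left(\frac{-2q}{p_2}\right)=1,\qquad \left(\frac{2p_1p_2}{q}\right)=1 ;
\end{equation*}
the analogous reductions apply when $p_1$ or $p_2$ is isolated. Using the supplementary laws together with $p_1\equiv p_2\equiv5\pmod8$ (where applicable) and $q\equiv3$ or $7\pmod8$, the symbol pattern prescribed by type~I is compatible only with $p_1$ isolated, by type~II only with $p_2$ isolated, and by type~III only with $q$ isolated. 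This is where the case analysis over the subcases (a),(b),(c) of each type enters.

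Finally I would remove the spurious factor of $2$. The remaining ambiguity between $p_1(x\pm1)$ and $2p_1(x\pm1)$ is precisely the parity of $x$, i.e.\ of $y$. I expect the main obstacle to be exactly this bookkeeping: simultaneously tracking the signs $\pm$, the power of $2$, and the odd-prime residue data through the nine subcases, and invoking the mod-$8$ information to force $y$ odd so that $x$ is even and no factor of $2$ survives. Once that is settled the square is $p_1(x\pm1)$, $p_2(x\pm1)$, or $q(x\pm1)$ as claimed; the quadratic-residue identifications of the previous step are routine to set up, and the delicate point is the joint control of the $2$-adic and odd-prime conditions in each subcase.
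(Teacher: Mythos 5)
Your proposal follows essentially the same route as the paper: establish $N(\epsilon_{p_1p_2q})=1$, factor $(x-1)(x+1)=p_1p_2q\,y^2$ with $\gcd(x-1,x+1)\mid 2$ to enumerate the six possible decompositions, discard the one with no isolated odd prime (non-squareness of $x\pm1$), and eliminate the wrong systems by reducing modulo $p_1$, $p_2$, $q$ and contradicting the Legendre-symbol conditions defining types I, II and III. The only cosmetic difference is that the paper disposes of the $2\ell(x\pm1)$ cases by the very same Legendre-symbol contradictions (each system $x\pm1=2\ell y_1^2$, $x\mp1=2\ell'y_2^2$ forces certain symbols to equal $+1$, which the type forbids) rather than by a separate parity/mod-$8$ argument, so the ``removal of the factor $2$'' you flag as the delicate last step is already subsumed in your symbol-matching step.
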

\begin{proof}
As $p_1\equiv5$ or $p_2\equiv5\pmod8$, so the unit index of   $\kk$ is $1$ (see \cite[Corollaire 3.2]{AzTa-08}). On the other hand,
 $N(\varepsilon_d)=1$ i.e. $x^2-1=y^2p_1p_2q$, hence by  Lemma \ref{6},   $x\pm1$ is not a square in $\NN$. Thus by the decomposition uniqueness in $\ZZ$ and by Lemma \ref{5}, there exist $y_1$, $y_2$ in $\ZZ$ such that:\\
   (1) $\left\{\begin{array}{ll}
 x\pm1=p_1y_1^2,\\
 x\mp1=p_2qy_2^2;
 \end{array}\right.$ or (2)
    $\left\{\begin{array}{ll}
 x\pm1=2p_1y_1^2,\\
 x\mp1=2p_2qy_2^2;
 \end{array}\right.$ or (3)
    $\left\{\begin{array}{ll}
 x\pm1=p_2y_1^2,\\
 x\mp1=p_1qy_2^2;
 \end{array}\right.$ or\\ (4)
    $\left\{\begin{array}{ll}
 x\pm1=2p_2y_1^2,\\
 x\mp1=2p_1qy_2^2;
 \end{array}\right.$ or (5)
    $\left\{\begin{array}{ll}
 x\pm1=qy_1^2,\\
 x\mp1=p_1p_2y_2^2;
 \end{array}\right.$ or (6)
    $\left\{\begin{array}{ll}
 x\pm1=2qy_1^2,\\
 x\mp1=2p_1p_2y_2^2;
 \end{array}\right.$\par
1.  Suppose $p_1$, $p_2$ and $q$ are of type I, then this contradicts systems (2), (3), (4), (5) and (6), since:
\begin{enumerate}[\upshape\indent -]
 \item system (2) implies that $\left(\frac{p_1}{p_2}\right)= \left(\frac{p_1}{q}\right)=1$
  \item system (3) implies that $\left(\frac{p_1}{p_2}\right)=\left(\frac{2}{p_1}\right)$ and $\left(\frac{p_1q}{p_2}\right)=\left(\frac{2}{p_2}\right)$
   \item system (4) implies that $\left(\frac{p_1}{p_2}\right)=\left(\frac{p_2}{q}\right)=1$
   \item   system (5) implies that $\left(\frac{p_1}{q}\right)=\left(\frac{2}{p_1}\right)=1$
  \item system (6) implies that $\left(\frac{p_1}{q}\right)=1$.
   \end{enumerate}
Thus only the system (1) occurs, which yields that    $p_1(x\pm1)$ is a square in $\NN$ and $p_2(x\pm1)$, $2p_2(x\pm1)$ are not.

 2.  and 3. are similarly checked.
\end{proof}
  Proceeding similarly, we prove the following lemma.
\begin{lem}\label{12}
Let  $p_1\equiv p_2\equiv-q\equiv1\pmod4$ be different primes and put
$\varepsilon_{p_2q}=a+b\sqrt{p_2q}$.
\begin{enumerate}[\rm\indent1.]
  \item If $p_1$,  $p_2$ and $q$ are of type $I(a)$, then $p_2(a\pm1)$ or $2p_2(a\pm1)$ is a square in $\NN$.
  \item If $p_1$,  $p_2$ and $q$ are of type $I(b)$, then $p_2(a\pm1)$ is a square in $\NN$.
  \item If $p_1$,  $p_2$ and $q$ are of type $I(c)$ or $II(a)$, then $a\pm1$ is a square in $\NN$.
  \item If $p_1$,  $p_2$ and $q$ are of type $II(c)$ or $III(a)$ or $III(b)$, then $p_2(a\pm1)$ is a square in $\NN$.
 \item If $p_1$,  $p_2$ and $q$ are of type $II(b)$ or $III(c)$, then $2p_2(a\pm1)$ is a square in $\NN$.
\end{enumerate}
\end{lem}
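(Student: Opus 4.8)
The plan is to imitate the proof of Lemma \ref{11}, but now applied to the fundamental unit $\varepsilon_{p_2q}=a+b\sqrt{p_2q}$. First I would record that, since $p_2\equiv1$ and $q\equiv-1\pmod4$, we have $p_2q\equiv3\pmod4$; hence $x^2-p_2qy^2=-1$ is insoluble modulo $4$, so $N(\varepsilon_{p_2q})=1$. Consequently $a,b$ are rational integers with $a^2-1=b^2p_2q$, that is $(a+1)(a-1)=b^2p_2q$, where $\gcd(a+1,a-1)$ divides $2$. Note that, in contrast with Lemma \ref{11}, here $a\pm1$ is permitted to be a square (indeed assertion $3$ asserts that it is), so no analogue of the ``$x\pm1$ is not a square'' step is available.

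Next I would enumerate the admissible factorizations by the uniqueness of decomposition in $\ZZ$, discarding those forbidden by Lemma \ref{5} (which says $2(a\pm1)$ and $2p_2q(a\pm1)$ are never squares). Splitting according to the parity of $a$, this leaves exactly three configurations, distinguished by which rational cofactor of $p_2q$ becomes a perfect square together with $a\pm1$: \emph{(A)} $a\pm1$ is a square (equivalently $p_2q(a\mp1)$ is a square); \emph{(B)} $p_2(a\pm1)$ is a square (equivalently $q(a\mp1)$ is a square); or \emph{(C)} $2p_2(a\pm1)$ is a square (equivalently $2q(a\mp1)$ is a square). In particular the $q$-side does not produce a separate possibility, since in \emph{(B)} and \emph{(C)} the factor on $a\mp1$ is forced, which is why every conclusion of the lemma can be phrased through $p_2$ alone.

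For each configuration I would reduce the underlying system modulo $p_2$ and modulo $q$ and read off the resulting quadratic residue symbols. For instance, \emph{(A)} forces $\left(\frac{2}{p_2}\right)=1$, i.e.\ $p_2\equiv1\pmod8$; \emph{(B)} forces $\left(\frac{q}{p_2}\right)=\left(\frac{2}{p_2}\right)$ and $\left(\frac{p_2}{q}\right)=\left(\frac{2}{q}\right)$, whence, by reciprocity (valid since $p_2\equiv1\pmod4$), a relation linking $\left(\frac{2}{p_2}\right)$ and $\left(\frac{2}{q}\right)$; and \emph{(C)} yields the analogous relations, the extra factor $2$ reversing the relevant sign. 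Then I would run through the nine type-cases, substituting for each the defining conditions on $\left(\frac{p_1}{p_2}\right),\left(\frac{p_1}{q}\right),\left(\frac{p_2}{q}\right)$ and on $p_1,p_2\pmod8$ recorded in \cite{AZT12-2}, and checking which of \emph{(A)}, \emph{(B)}, \emph{(C)} survive. This singles out the stated answer in every case, two survivors (\emph{(B)} or \emph{(C)}) occurring only in type $I(a)$, where the $\pmod8$ datum separating \emph{(B)} from \emph{(C)} is not pinned down.

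The main obstacle will be precisely this last bookkeeping: organising the types $I,II,III$ and the sub-cases $(a),(b),(c)$ so that the $\pmod8$ refinements distinguishing \emph{(B)} from \emph{(C)} are treated uniformly, and confirming that in each of the nine cases all but the claimed configuration(s) are incompatible with the hypotheses of \cite{AZT12-2}. Once the residue-symbol translation of \emph{(A)}--\emph{(C)} above is in hand, each individual case reduces to a short verification of a few Legendre symbol identities, exactly as in the proof of Lemma \ref{11}.
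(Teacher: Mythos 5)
Your proposal is correct and matches the paper's approach: the paper gives no separate argument for Lemma \ref{12}, stating only that one proceeds as in Lemma \ref{11}, and your plan is precisely that transplantation --- establish $N(\varepsilon_{p_2q})=1$ from $p_2q\equiv3\pmod4$, list the admissible factorizations of $(a+1)(a-1)=b^2p_2q$ after discarding those forbidden by Lemma \ref{5}, and eliminate configurations case by case via Legendre symbols against the type conditions of \cite{AZT12-2}. You also correctly flag the one genuine difference from Lemma \ref{11}, namely that the configuration ``$a\pm1$ is a square'' can no longer be excluded a priori (and indeed occurs in assertion 3).
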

Denote by $\h$ and $\hh$ (resp. $\hhh$ and $\hhhh$) the prime ideals of $\kk$ above $p_1$ (resp. $p_2$), then we have:
\begin{lem}[\cite{AZT12-2}]\label{13}
Let  $p_1\equiv p_2\equiv-q\equiv1\pmod4$  be different primes and assume $\mathbf{C}l_2(\mathds{\kk})\simeq (2, 2, 2)$.
\begin{enumerate}[\rm\indent1.]
  \item If $p_1$,  $p_2$ and $q$ are of type $I$, then $\mathbf{C}l_2(\mathds{\kk})=\langle [\mathcal{H}_1], [\mathcal{H}_3], [\mathcal{H}_4]\rangle$.
  \item If $p_1$,  $p_2$ and $q$ are of type $II$ or $III$, then $\mathbf{C}l_2(\mathds{\kk})=\langle [\mathcal{H}_1], [\mathcal{H}_2], [\mathcal{H}_3]\rangle$.
  \end{enumerate}
\end{lem}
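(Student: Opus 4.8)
The plan is to realize $\mathbf{C}l_2(\kk)$ as a group generated by three of the prime classes $[\mathcal{H}_j]$ and then select the correct triple from the type. I would begin with two elementary observations. Since $\mathbf{C}l_2(\kk)\simeq(2,2,2)$, we have $|\mathbf{C}l_2(\kk)|=8$ and every class is $2$-torsion. Moreover each $\mathcal{H}_j$ satisfies $\mathcal{H}_j^2=(\pi_j)$, so $[\h]^2=[\hh]^2=[\hhh]^2=[\hhhh]^2=1$; in particular all four classes lie in $\mathbf{C}l_2(\kk)$, and any subgroup they generate is contained in $\mathbf{C}l_2(\kk)$. Thus to pin down $\mathbf{C}l_2(\kk)$ it suffices to exhibit three of these classes generating a subgroup of order $8$, since a subgroup of order $8$ in a group of order $8$ is the whole group.

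Next I would combine Lemma \ref{11} with Lemma \ref{7} type by type; note the type hypothesis forces $p_1\equiv5$ or $p_2\equiv5\pmod8$. If $p_1,p_2,q$ are of type I, then Lemma \ref{11}(1) gives that $p_1(x\pm1)$ is a square in $\NN$, and Lemma \ref{7}(3) then yields $|\langle[\h],[\hhh],[\hhhh]\rangle|=8$; comparing orders gives $\mathbf{C}l_2(\kk)=\langle[\h],[\hhh],[\hhhh]\rangle$. If they are of type II, Lemma \ref{11}(2) gives that $p_2(x\pm1)$ is a square, and Lemma \ref{7}(4) yields $|\langle[\h],[\hh],[\hhh]\rangle|=8$, hence $\mathbf{C}l_2(\kk)=\langle[\h],[\hh],[\hhh]\rangle$. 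If they are of type III, Lemma \ref{11}(3) gives that $q(x\pm1)$ is a square, and Lemma \ref{7}(2) shows that both $\langle[\h],[\hh],[\hhh]\rangle$ and $\langle[\h],[\hhh],[\hhhh]\rangle$ have order $8$; either equals $\mathbf{C}l_2(\kk)$, and we record the first so as to match type II.

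As a cross-check one may route the whole argument through the strongly ambiguous classes: because $p_1\equiv5$ or $p_2\equiv5\pmod8$, Proposition \ref{248}(3) gives $\mathrm{A}m_s(\kk/\QQ(i))=\mathrm{A}m(\kk/\QQ(i))$ of order $2^3=8$, so $\mathrm{A}m_s(\kk/\QQ(i))=\mathbf{C}l_2(\kk)$, and Lemma \ref{7} again singles out the correct generating set among the two offered by Proposition \ref{248}(3). The proof is therefore essentially an order count, so the only real content is the independence of the three chosen classes — that they generate the full order $8$ rather than collapsing — which is exactly what Lemma \ref{7} supplies via the non-principality criteria of \cite{AZT12-2}. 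The one point deserving care is type III, where Lemma \ref{7}(2) renders the two candidate generating sets interchangeable and one must simply fix a choice, as in the statement.
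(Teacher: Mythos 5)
Your argument is correct, but note that the paper does not actually prove Lemma \ref{13}: it is quoted from \cite{AZT12-2}, where the generators of $\mathbf{C}l_2(\kk)$ are determined directly. What you have done is reconstruct the result internally, from Lemma \ref{11} (which identifies, type by type, which of the factorizations $x\pm1=p_1y_1^2$, $p_2y_1^2$, $qy_1^2$ actually occurs) and Lemma \ref{7} (which converts that information into the order of the subgroup generated by the relevant $[\mathcal{H}_j]$), closed off by the order count $|\mathbf{C}l_2(\kk)|=8$ forced by the $(2,2,2)$ hypothesis. The correspondence type I $\rightarrow$ Lemma \ref{7}(3), type II $\rightarrow$ Lemma \ref{7}(4), type III $\rightarrow$ Lemma \ref{7}(2) is exactly right, and your observation that in type III both candidate triples generate the whole group, so the statement merely fixes a convention, is a point the paper leaves implicit. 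Two caveats. First, your derivation is not fully independent of \cite{AZT12-2}: Lemma \ref{7} itself rests on Propositions 1 and 2 of that reference (the non-principality criteria), though there is no circularity, since those are more elementary than Lemma \ref{13}. Second, the cross-check via Proposition \ref{248}(3) is harmless but redundant, and runs opposite to the paper's own logic: the Remark following Lemma \ref{13} deduces $\mathrm{Am}_s(\kk/\QQ(i))=\mathbf{C}l_2(\kk)$ \emph{from} Lemma \ref{13} and Proposition \ref{248}, whereas you use the ambiguous-class count as independent confirmation. On balance your route is the more transparent one for a reader of this paper, since it turns the lemma into a corollary of material already on the page rather than an unexplained import.
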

\begin{rema}
If $\mathbf{C}l_2(\mathds{\kk})$ is of type $(2, 2, 2)$, then
by Proposition \ref{248} and Lemma \ref{13}, we deduce that $\mathrm{Am}(\kk/\QQ(i))=\mathrm{Am}_s(\kk/\QQ(i))=\mathbf{C}l_2(\mathds{\kk})$.
\end{rema}
\begin{The}\label{14}
Let $\kk=\K$,  where $d=p_1p_2q$ with $p_1$, $p_2$ and $q$ are different primes such that  $\mathbf{C}l_2(\kk)$, the   $2$-class groupe of $\kk$, is of type $(2, 2, 2)$.
\begin{enumerate}[\upshape\indent1.]
  \item If $p_1$,  $p_2$ and $q$ are of type $I(c)$, then $\kappa_{\KK_1}=\langle[\h]\rangle$.
  \item If $p_1$,  $p_2$ and $q$ are of type $I(a)$ or $I(b)$, then $\kappa_{\KK_1}=\langle[\h], [\hhh\hhhh]\rangle$.
  \item If $p_1$,  $p_2$ and $q$ are of type $II$ or $III$, then $\kappa_{\KK_1}=\langle[\h], [\hh]\rangle$.
\end{enumerate}
\end{The}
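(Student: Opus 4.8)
The plan is to obtain Theorem \ref{14} by merely specializing the general capitulation result Theorem \ref{239} to the arithmetic situation that the hypothesis $\mathbf{C}l_2(\kk)\simeq(2,2,2)$ forces, which is precisely what Lemmas \ref{11} and \ref{12} record. Since Theorem \ref{239} already expresses $\kappa_{\KK_1}$ in terms of which of the quantities $x\pm1$, $p_1(x\pm1)$, $2p_1(x\pm1)$ (coming from $\epsilon_{p_1p_2q}=x+y\sqrt{p_1p_2q}$) and $a\pm1$ (coming from $\epsilon_{p_2q}=a+b\sqrt{p_2q}$) are squares in $\NN$, the whole task reduces to reading off, for each listed type, which of the four branches of Theorem \ref{239} is triggered.

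First I would isolate the one observation underlying every case: because the prime factorizations of $x\pm1$ and of $a\pm1$ in $\ZZ$ are unique, the various ``square'' conditions occurring in Theorem \ref{239} are pairwise mutually exclusive. Concretely, if $p_1(x\pm1)$ or $2p_1(x\pm1)$ is a square in $\NN$, then none of $x\pm1$, $p_2(x\pm1)$, $q(x\pm1)$ can be; and if $p_2(a\pm1)$ or $2p_2(a\pm1)$ is a square, then $a+1$ and $a-1$ are not. This is exactly the unique-factorization bookkeeping already carried out in Lemma \ref{3:105} and in the proof of Proposition \ref{27}, so I would simply invoke it rather than repeat it.

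Then I would match the three cases directly. For type $I(c)$: Lemma \ref{11}(1) gives that $p_1(x\pm1)$ is a square in $\NN$, while Lemma \ref{12}(3) gives that $a\pm1$ is a square; this is the hypothesis of Theorem \ref{239}(2), whence $\kappa_{\KK_1}=\langle[\h]\rangle$. For type $I(a)$ or $I(b)$: again $p_1(x\pm1)$ is a square by Lemma \ref{11}(1), and by Lemma \ref{12}(1)--(2) one of $p_2(a\pm1)$, $2p_2(a\pm1)$ is a square, so the exclusivity observation shows $a+1$ and $a-1$ are not squares; this places us in Theorem \ref{239}(3), giving $\kappa_{\KK_1}=\langle[\h],[\hhh\hhhh]\rangle$. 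For type $II$ or $III$: Lemma \ref{11}(2)--(3) shows $p_2(x\pm1)$ (type $II$) or $q(x\pm1)$ (type $III$) is a square, whence by exclusivity none of $x\pm1$, $p_1(x\pm1)$, $2p_1(x\pm1)$ is; consequently none of the first three branches of Theorem \ref{239} can apply and we fall into its ``other cases'' branch (4), giving $\kappa_{\KK_1}=\langle[\h],[\hh]\rangle$.

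There is no deep obstacle here; the only point requiring care is verifying that the branch selection is both exhaustive and unambiguous, i.e.\ that the type-$(2,2,2)$ hypothesis forces \emph{exactly one} of the possible square conditions so that precisely one branch of Theorem \ref{239} is activated in each case. This is guaranteed by the mutual-exclusivity statement of the second paragraph, so once that is made explicit the three cases follow by the direct matching above.
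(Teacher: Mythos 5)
Your proposal is correct and follows essentially the same route as the paper: the paper's proof likewise reads off the relevant square conditions from Lemmas \ref{11} and \ref{12} for each type and then invokes Theorem \ref{239} to select the corresponding branch. The only difference is that you make the mutual-exclusivity of the square conditions (via unique factorization) explicit, which the paper leaves implicit.
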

\begin{proof}
From  Lemmas \ref{11},  \ref{12} and \ref{13} we get:
\begin{enumerate}[\upshape\indent1.]
\item  If $p_1$,  $p_2$ and $q$ are of type $I(c)$, then  $a\pm1$ and $p_1(x\pm1)$ are squares in  $\NN$. Hence Theorem \ref{239} implies the result.
\item If $p_1$,  $p_2$ and $q$ are of type $I(a)$ or $I(b)$, then $p_2(a\pm1)$ or $2p_2(a\pm1)$ is a square in $\NN$,  and since $p_1(x\pm1)$ is a square in   $\NN$, hence Theorem \ref{239} implies the result.
\item a. If $p_1$,  $p_2$ and $q$ are of type $II$, then $a\pm1$ or $p_2(a\pm1)$ or $2p_2(a\pm1)$  is a square in $\NN$, and as in this case  $p_2(x\pm1)$ is also a square in $\NN$, hence Theorem \ref{239} implies the result.\\
 b. If $p_1$,  $p_2$ and $q$ are of type $III$, then $p_2(a\pm1)$ or $2p_2(a\pm1)$  is a square in $\NN$, and since  $q(x\pm1)$ is also a square in dans $\NN$, hence Theorem \ref{239} implies the result.
\end{enumerate}
\end{proof}
As $p_1$ and $p_2$ play symmetric roles, so with a similar argument to that used in the previous theorem, we deduce the following theorem. Note that in this case $\hhh$ and $\hhhh$ always capitulate in $\KK_2$ (Proposition \ref{9}). Note also that whenever   $p_1$, $p_2$ and $q$ are of type $II$, then $[\hhh]=[\hhhh]$ since in this case $p_2(x\pm1)$ is a square in $\NN$ and the result is guaranteed by \cite[Proposition 1]{AZT12-2}. Finally,  note that if $p_1$, $p_2$ and $q$ are of type $III$, then $\mathcal{Q}$, the prime ideal of $\kk$ lies above $q$, is principal in $\kk$; hence $\h\hh\hhh\hhhh$ is too.
\begin{The}\label{15}
Keep the hypotheses and notations mentioned in  Theorem $\ref{14}$.
\begin{enumerate}[\upshape\indent1.]
  \item If $p_1$,  $p_2$ and $q$ are of type $II(c)$, then $\kappa_{\KK_2}=\langle[\hhh]\rangle$.
  \item If $p_1$,  $p_2$ and $q$ are of type $II(a)$ or $II(b)$ or $III$, then $\kappa_{\KK_2}=\langle[\hhh], [\h\hh]\rangle$.
  \item If $p_1$,  $p_2$ and $q$ are of type $I$, then $\kappa_{\KK_2}=\langle[\hhh], [\hhhh]\rangle$.
\end{enumerate}
\end{The}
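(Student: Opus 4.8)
The plan is to mirror the proof of Theorem~\ref{14}, transporting it from $\KK_1$ to $\KK_2$ via the symmetry interchanging $p_1$ and $p_2$. Under this symmetry $\KK_1=\kk(\sqrt{p_1})$ is carried to $\KK_2=\kk(\sqrt{p_2})$, the pair $\{\h,\hh\}$ of primes above $p_1$ is exchanged with the pair $\{\hhh,\hhhh\}$ above $p_2$, the unit $\epsilon_{p_2q}$ is replaced by $\epsilon_{p_1q}=a+b\sqrt{p_1q}$, and the classification is permuted so that type $I$ is interchanged with type $II$ while type $III$ is fixed. First I would record, via Proposition~\ref{9}, that the primes $\hhh$ and $\hhhh$ capitulate in $\KK_2$; this is the exact analogue of the capitulation of $\h,\hh$ in $\KK_1$ used in Theorem~\ref{14}. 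Since $\mathbf{C}l_2(\kk)\simeq(2,2,2)$ forces $p_1\equiv5$ or $p_2\equiv5\pmod8$, Proposition~\ref{248} shows that neither $x+1$ nor $x-1$ is a square, so item~1 of Theorem~\ref{241} never occurs in this setting.

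Next I would feed the arithmetic input into Theorem~\ref{241}. The behaviour of $x$ (with $\epsilon_{p_1p_2q}=x+y\sqrt{p_1p_2q}$) is fixed by Lemma~\ref{11}: for type $I$ one has $x\pm1=p_1y_1^2$, for type $II$ one has $x\pm1=p_2y_1^2$, and for type $III$ one has $x\pm1=qy_1^2$ (up to a factor $2$). A short check on the complementary factor $x\mp1$ then shows that the hypotheses ``$p_2(x\pm1)$'' or ``$2p_2(x\pm1)$ is a square in $\NN$'' appearing in Theorem~\ref{241} hold precisely for type $II$, and for neither type $I$ nor type $III$. The behaviour of $a$ is supplied by the $p_1\leftrightarrow p_2$ analogue of Lemma~\ref{12}, valid because $p_1$ and $p_2$ are symmetric: in subcase $II(c)$ one gets $a\pm1$ a square, while in subcases $II(a)$, $II(b)$ one gets $p_1(a\pm1)$ or $2p_1(a\pm1)$ a square, so $a\pm1$ is not. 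Plugging these into Theorem~\ref{241} gives $\kappa_{\KK_2}=\langle[\hhh]\rangle$ in subcase $II(c)$ (item~2 there) and $\kappa_{\KK_2}=\langle[\hhh],[\h\hh]\rangle$ in subcases $II(a)$, $II(b)$ (item~3), whereas for type $I$ and type $III$ the hypotheses of items~1--3 all fail, so we land in item~4 and obtain $\kappa_{\KK_2}=\langle[\hhh],[\hhhh]\rangle$. Throughout, Lemma~\ref{13} identifies the generators of $\mathbf{C}l_2(\kk)$ and confirms that the listed classes are nontrivial.

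This already yields items~1 and~3 and the type $II$ part of item~2. The one point needing genuine care, and which I expect to be the main obstacle, is the type $III$ part of item~2: Theorem~\ref{241} produces $\langle[\hhh],[\hhhh]\rangle$, whereas the statement asks for $\langle[\hhh],[\h\hh]\rangle$. Here I would invoke the fact, noted before the statement and established in \cite{AZT12-2}, that for type $III$ the prime $\mathcal{Q}$ of $\kk$ above $q$ is principal; hence $[\h\hh\hhh\hhhh]=[\mathcal{Q}]=1$, so $[\h\hh]=[\hhh\hhhh]$ and therefore $[\hhhh]=[\hhh][\h\hh]$, every class having order $2$. Consequently $\langle[\hhh],[\hhhh]\rangle=\langle[\hhh],[\h\hh]\rangle$, which merges the type $III$ output into item~2. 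The parallel remark that $[\hhh]=[\hhhh]$ for type $II$, which holds since $p_2(x\pm1)$ is a square (by \cite[Proposition~1]{AZT12-2}), keeps the subcase bookkeeping consistent and reconciles the value $\langle[\hhh]\rangle$ in subcase $II(c)$ with the apparent rank computation. Assembling these identifications completes the proof.
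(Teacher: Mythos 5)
Your proposal is correct and follows essentially the same route as the paper, which derives Theorem \ref{15} from the $p_1\leftrightarrow p_2$ symmetry applied to Theorem \ref{14} (via Theorem \ref{241} and the analogues of Lemmas \ref{11} and \ref{12}), noting that $\hhh,\hhhh$ capitulate by Proposition \ref{9}, that $[\hhh]=[\hhhh]$ in type $II$, and that $\mathcal{Q}$ (hence $\h\hh\hhh\hhhh$) is principal in type $III$. Your treatment of the type $III$ case, rewriting $\langle[\hhh],[\hhhh]\rangle=\langle[\hhh],[\h\hh]\rangle$ via $[\h\hh\hhh\hhhh]=1$, is exactly the reconciliation the paper intends.
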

Finally, we compute the 2-idea classes of $\kk$ that capitulate in $\KK_3=\QQ(\sqrt{q}, \sqrt{p_1p_2}, i)$.
\begin{The}\label{16}
Keep the hypotheses and notations mentioned in  Theorem $\ref{14}$ and assume   $N(\varepsilon_{p_1p_2})=1$.
\begin{enumerate}[\upshape\indent1.]
  \item If $p_1$,  $p_2$ and $q$ are of type $I$, then $\kappa_{\KK_3}=\langle[\hhh\hhhh]\rangle$.
  \item If $p_1$,  $p_2$ and $q$ are of type $II$  or $III$, then $\kappa_{\KK_3}=\langle[\h\hh]\rangle$.
\end{enumerate}
\end{The}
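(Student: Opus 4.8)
The plan is to read the answer straight off the general capitulation result Theorem~\ref{243}, once we know for each type which of the integers $p_1(x\pm1)$, $p_2(x\pm1)$, $q(x\pm1)$ is a perfect square in $\NN$; and this last piece of information is exactly what Lemma~\ref{11} supplies. So essentially no new computation is required: the theorem follows by matching the three types I, II, III against the four cases of Theorem~\ref{243}, under the standing assumption $N(\varepsilon_{p_1p_2})=1$.

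First I would record that the hypothesis $\mathbf{C}l_2(\kk)\simeq(2,2,2)$ forces $p_1\equiv5$ or $p_2\equiv5\pmod8$, whence (as in the proof of Lemma~\ref{11}) the unit index of $\kk$ equals $1$ and therefore neither $x+1$ nor $x-1$ is a square in $\NN$. This rules out case~$1$ of Theorem~\ref{243} throughout, so in every type we necessarily land in one of the cases $2$, $3$, $4$.

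Next I would dispatch the three types in turn. If $p_1,p_2,q$ are of type~$I$, then Lemma~\ref{11}(1) gives that $p_1(x\pm1)$ is a square in $\NN$, so we are in case~$2$ of Theorem~\ref{243} and obtain $\kappa_{\KK_3}=\langle[\hhh\hhhh]\rangle$, which is statement~$1$. If they are of type~$II$, then Lemma~\ref{11}(2) gives that $p_2(x\pm1)$ is a square, so case~$3$ of Theorem~\ref{243} applies and yields $\kappa_{\KK_3}=\langle[\h\hh]\rangle$. If they are of type~$III$, then Lemma~\ref{11}(3) gives that $q(x\pm1)$ is a square, so case~$4$ of Theorem~\ref{243} applies and yields $\kappa_{\KK_3}=\langle[\h\hh]\rangle=\langle[\hhh\hhhh]\rangle$; in particular $\kappa_{\KK_3}=\langle[\h\hh]\rangle$, which together with the type~$II$ computation gives statement~$2$.

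Since there is nothing genuinely to compute, the only point needing a little care—the closest thing to an obstacle—is the bookkeeping for type~$III$: one must observe that Theorem~\ref{243}(4) already identifies the two generators $[\h\hh]$ and $[\hhh\hhhh]$, so that recording the kernel simply as $\langle[\h\hh]\rangle$ loses no information and is consistent with the type~$II$ answer. Assembling the three cases then gives statements~$1$ and~$2$, completing the proof.
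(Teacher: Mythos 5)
Your proposal is correct and follows essentially the same route as the paper: the authors likewise invoke Lemma \ref{11} to identify which of $p_1(x\pm1)$, $p_2(x\pm1)$, $q(x\pm1)$ is a square in each type and then read off the kernel from the corresponding case of Theorem \ref{243}. Your preliminary remark that $p_1\equiv5$ or $p_2\equiv5\pmod8$ excludes case 1 of Theorem \ref{243} is a sound (and slightly more explicit) version of what the paper leaves implicit in Lemma \ref{11}.
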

\begin{proof}
From  Lemmas \ref{11} and \ref{13} we get:
\begin{enumerate}[\upshape\indent1.]
\item  If $p_1$,  $p_2$ and $q$ are of type $I(c)$, then  then  $p_1(x\pm1)$ is a square in  $\NN$. Hence Theorem \ref{243} implies the result.
\item a. If $p_1$,  $p_2$ and $q$ are of type $II$, then  $p_2(x\pm1)$   is a square in $\NN$, hence Theorem \ref{243} implies the result.\\
 b. If $p_1$,  $p_2$ and $q$ are of type $III$, then $q(x\pm1)$  i.e. $p_1p_2(x\pm1)$    is a square in $\NN$, hence Theorem \ref{243} implies the result.
\end{enumerate}
\end{proof}
\begin{The}\label{17}
Keep the hypotheses and notations mentioned in  Theorem $\ref{14}$ and assume   $N(\varepsilon_{p_1p_2})=-1$.
\begin{enumerate}[\upshape\indent1.]
  \item If $p_1$,  $p_2$ and $q$ are of type $III$, then $\kappa_{\KK_3}=\langle[\h\hhh]\rangle$ or $\langle[\hh\hhh]\rangle$.
  \item If $p_1$,  $p_2$ and $q$ are of type  $II$, then $\kappa_{\KK_3}=\langle[\h\hhh], [\hh\hhh]\rangle$.
  \item If $p_1$,  $p_2$ and $q$ are of type $I$, then $\kappa_{\KK_3}=\langle[\h\hhh], [\h\hhhh]\rangle$.
\end{enumerate}
\end{The}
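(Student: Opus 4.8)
The plan is to reduce everything to Theorem \ref{245}, exactly as the companion Theorem \ref{16} reduces to Theorem \ref{243} in the case $N(\epsilon_{p_1p_2})=1$; the only genuinely new input is the classification of the fundamental unit $\epsilon_{p_1p_2q}=x+y\sqrt{p_1p_2q}$ supplied by Lemma \ref{11}. First I would recall that $\mathbf{C}l_2(\kk)\simeq(2,2,2)$ forces $p_1\equiv5$ or $p_2\equiv5\pmod8$, hence $N(\epsilon_{p_1p_2q})=1$, so that Lemma \ref{11} applies and tells us precisely which radical condition holds: $p_1(x\pm1)$ is a square in $\NN$ in type $I$, $p_2(x\pm1)$ is a square in type $II$, and $q(x\pm1)$ (equivalently $p_1p_2(x\mp1)$) is a square in type $III$. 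Feeding each of these into the matching case of Theorem \ref{245} is then almost mechanical.

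For type $I$ the hypothesis that $p_1(x\pm1)$ is a square places us in case $3$ of Theorem \ref{245}, which gives $\kappa_{\KK_3}=\langle[\h\hhh],[\h\hhhh]\rangle$ and is exactly assertion $3$. For type $II$ the hypothesis that $p_2(x\pm1)$ is a square places us in case $4$ of Theorem \ref{245}, giving $\kappa_{\KK_3}=\langle[\h\hhh],[\hh\hhh]\rangle$, which is assertion $2$. In both situations the two displayed generators are non-principal in $\kk$ — this was already established inside the proof of Theorem \ref{245} through \cite[Proposition 1]{AZT12-2} — so the kernel has order $4$, in agreement with Theorem \ref{226}.

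The only point that requires care is type $III$, assertion $1$. Here $q(x\pm1)$ is a square, so Theorem \ref{245}(1) applies and yields $\kappa_{\KK_3}=\langle[\h\hhh]\rangle$ or $\langle[\h\hhhh]\rangle$, a cyclic group of order $2$ (consistent with Theorem \ref{226}). To bring this into the stated shape I would invoke the observation made just before Theorem \ref{15}: in type $III$ the prime $\mathcal{Q}$ of $\kk$ above $q$ is principal. Combined with $[\h\hh\hhh\hhhh\mathcal{Q}]=[(\sqrt{p_1p_2q})]=1$ this forces $[\h\hh\hhh\hhhh]=1$, whence $[\hhhh]=[\h\hh\hhh]$ and therefore $[\h\hhhh]=[\hh\hhh]$, all classes having order dividing $2$. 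Substituting this identity turns the alternative $\langle[\h\hhhh]\rangle$ into $\langle[\hh\hhh]\rangle$ and produces precisely $\kappa_{\KK_3}=\langle[\h\hhh]\rangle$ or $\langle[\hh\hhh]\rangle$. Thus the main obstacle is not any computation but the bookkeeping that identifies $[\h\hhhh]$ with $[\hh\hhh]$ via the principality of $\mathcal{Q}$; once that relation is secured, all three assertions drop out of Lemma \ref{11} and Theorem \ref{245}.
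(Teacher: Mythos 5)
Your proposal is correct and follows exactly the route the paper takes: its entire proof of this theorem is the one-line remark that it is ``a simple deduction from Theorem \ref{245} and Lemma \ref{11}'', and your argument is precisely that deduction written out. Your extra care in type $III$ --- using the principality of $\mathcal{Q}$, hence of $\h\hh\hhh\hhhh$, to identify $[\h\hhhh]$ with $[\hh\hhh]$ and reconcile Theorem \ref{245}(1) with the stated form of assertion 1 --- is a detail the paper leaves implicit, and you have supplied it correctly.
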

\begin{proof}It is  a simple deduction from Theorem \ref{245} and Lemma \ref{11}.
\end{proof}
From Theorems \ref{14}, \ref{15}, \ref{16} and \ref{17}, we deduce the following corollary .
\begin{coro}\label{18}
Keep the hypotheses and notations mentioned in  Theorem $\ref{14}$. Then all the classes of   $\mathbf{C}l_2(\kk)$ capitulate in $\G$ i.e.
$$\kappa_{\G}=\mathbf{C}l_2(\kk)=\mathrm{Am}(\kk/\QQ(i))=\mathrm{Am}_s(\kk/\QQ(i)).$$
\end{coro}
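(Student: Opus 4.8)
The plan is to combine the explicit descriptions of the three capitulation kernels obtained in Theorems \ref{14}, \ref{15}, \ref{16} and \ref{17} and to check that, in every type, they already generate the whole group $\mathbf{C}l_2(\kk)$. First I would record the elementary but crucial observation that each $\KK_j$ ($1\leq j\leq 3$) is contained in the genus field $\G$; hence a class of $\kk$ that becomes principal in some $\KK_j$ is a fortiori principal in $\G$. This gives the inclusions $\kappa_{\KK_j}\subseteq \kappa_{\G}$ for $j=1,2,3$, and therefore
$$\langle \kappa_{\KK_1},\kappa_{\KK_2},\kappa_{\KK_3}\rangle\subseteq \kappa_{\G}\subseteq \mathbf{C}l_2(\kk).$$
Since $\kappa_{\G}$ is trivially contained in $\mathbf{C}l_2(\kk)$, it suffices to prove that the subgroup generated by the three kernels is all of $\mathbf{C}l_2(\kk)$.

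Next I would invoke Lemma \ref{13} to fix a basis of the rank-$3$ group $\mathbf{C}l_2(\kk)\simeq(2,2,2)$ and argue type by type. For type $I$, Lemma \ref{13} gives $\mathbf{C}l_2(\kk)=\langle[\h],[\hhh],[\hhhh]\rangle$; Theorem \ref{14} shows that $[\h]\in\kappa_{\KK_1}$ (the kernel equals $\langle[\h]\rangle$ in case $I(c)$ and $\langle[\h],[\hhh\hhhh]\rangle$ in cases $I(a),I(b)$, so $[\h]$ always lies in it), while Theorem \ref{15} gives $\kappa_{\KK_2}=\langle[\hhh],[\hhhh]\rangle$. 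Thus $\langle\kappa_{\KK_1},\kappa_{\KK_2}\rangle$ already contains the three basis classes $[\h],[\hhh],[\hhhh]$, so it equals $\mathbf{C}l_2(\kk)$.

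For types $II$ and $III$, Lemma \ref{13} gives $\mathbf{C}l_2(\kk)=\langle[\h],[\hh],[\hhh]\rangle$; Theorem \ref{14} yields $\kappa_{\KK_1}=\langle[\h],[\hh]\rangle$, and Theorem \ref{15} shows $[\hhh]\in\kappa_{\KK_2}$ (the kernel is $\langle[\hhh]\rangle$ in case $II(c)$ and contains $[\hhh]$ in the remaining cases). Hence again $\langle\kappa_{\KK_1},\kappa_{\KK_2}\rangle=\mathbf{C}l_2(\kk)$. In either situation Theorems \ref{16} and \ref{17} describe $\kappa_{\KK_3}$ as well and confirm the consistency of the picture, but they are not needed for the spanning. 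Combining these computations gives $\kappa_{\G}=\mathbf{C}l_2(\kk)$, and the chain of equalities $\kappa_{\G}=\mathbf{C}l_2(\kk)=\mathrm{Am}(\kk/\QQ(i))=\mathrm{Am}_s(\kk/\QQ(i))$ then follows at once from the Remark preceding Theorem \ref{14}.

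The only point requiring any care is the independence of the generators supplied by the two fields $\KK_1$ and $\KK_2$: one must be sure that, say in type $I$, the class $[\h]$ coming from $\kappa_{\KK_1}$ is not already expressible through $[\hhh]$ and $[\hhhh]$. This is exactly what Lemma \ref{13} guarantees, since it exhibits $[\h],[\hhh],[\hhhh]$ as a genuine basis of the elementary abelian group $\mathbf{C}l_2(\kk)$; the analogous remark settles types $II$ and $III$. So the main (modest) obstacle is purely the bookkeeping of matching the kernels of Theorems \ref{14} and \ref{15} against the bases of Lemma \ref{13}, which is routine once the reduction above is in place.
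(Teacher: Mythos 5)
Your proposal is correct and follows essentially the same route as the paper, which simply asserts that the corollary is deduced from Theorems \ref{14}, \ref{15}, \ref{16} and \ref{17}; you have merely written out the bookkeeping (the inclusions $\kappa_{\KK_j}\subseteq\kappa_{\G}$, the matching of the kernels against the generating sets of Lemma \ref{13} type by type, and the final appeal to the Remark identifying $\mathbf{C}l_2(\kk)$ with $\mathrm{Am}(\kk/\QQ(i))=\mathrm{Am}_s(\kk/\QQ(i))$). The verification goes through in every case exactly as you describe, and indeed $\KK_1$ and $\KK_2$ alone already suffice to span.
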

\section{Proof of the main Theorem}
The main Theorem is a simple deduction from Proposition \ref{248}, Theorems \ref{226}, \ref{239}, \ref{241}, \ref{243}, \ref{245} and Corollaries \ref{19}, \ref{18}.

{\small
}
\end{document}